\newtheorem{lemma}{Lemma}[section]
\newtheorem{theorem}[lemma]{Theorem}
\newtheorem{proposition}[lemma]{Proposition}
\newtheorem{prop}[lemma]{Proposition}
\newtheorem{cor}[lemma]{Corollary}
\newtheorem{claim*}{Claim}
\newtheorem{thm}[lemma]{Theorem}
\newtheorem{question}[lemma]{Question}
\theoremstyle{definition}
\newtheorem{remark}[lemma]{Remark}
\newtheorem{remarks}[lemma]{Remarks}
\newtheorem{example}[lemma]{Example}
\newcommand{\A}{{\mathbb A}}
\newcommand{\G}{{\mathbb G}}
\newcommand{\PP}{{\mathbb P}}
\newcommand{\F}{{\mathbb F}}
\newcommand{\Q}{{\mathbb Q}}
\newcommand{\Z}{{\mathbb Z}}
\newcommand{\Xbar}{{\overline{X}}}
\newcommand{\kbar}{{\overline{k}}}
\newcommand{\Kbar}{{\overline{K}}}
\newcommand{\Ubar}{{\overline{U}}}
\newcommand{\Adeles}{{\mathbb A}}
\newcommand{\kk}{{\mathbf k}}
\newcommand{\eps}{{\varepsilon}}
\newcommand{\scrS}{{\mathscr S}}
\newcommand{\scrT}{{\mathscr T}}
\newcommand{\calA}{{\mathcal A}}
\newcommand{\calB}{{\mathcal B}}
\newcommand{\calD}{{\mathcal D}}
\newcommand{\calO}{{\mathcal O}}
\newcommand{\calX}{{\mathcal X}}
\DeclareMathOperator{\HH}{H}
\DeclareMathOperator{\rk}{rk}
\DeclareMathOperator{\inv}{inv}
\DeclareMathOperator{\im}{im}
\DeclareMathOperator{\Aut}{Aut}
\DeclareMathOperator{\Gal}{Gal}
\DeclareMathOperator{\Norm}{N}
\DeclareMathOperator{\Br}{Br}
\DeclareMathOperator{\divv}{div}
\DeclareMathOperator{\ord}{ord}
\DeclareMathOperator{\Div}{Div}
\DeclareMathOperator{\Pic}{Pic}
\DeclareMathOperator{\Jac}{Jac}
\DeclareMathOperator{\ev}{ev}
\DeclareMathOperator{\et}{et}
\DeclareMathOperator{\id}{id}
\DeclareMathOperator{\verti}{vert}
\newcommand{\isom}{\cong}
\numberwithin{equation}{section}
\numberwithin{table}{section}
\newcommand{\defi}[1]{\textsf{#1}} 
\title{Vertical Brauer groups and del Pezzo surfaces of degree $4$}
\author{Anthony V\'arilly-Alvarado}
\author{Bianca Viray}
\thanks{The first author was partially supported by NSF grant DMS-1103659, the Centre Interfacultaire Bernoulli, and the \'Ecole Polytechnique F\'ed\'eral de Lausanne.  The second author was partially supported by NSF grant DMS-1002933 and the Centre Interfacultaire Bernoulli. }
\address{Department of Mathematics MS 136, Rice University, Houston, TX
			77005, USA}
\address{Section de math\'ematiques EPFL, FSB-SMA, Station 8 - B\^atiment MA, CH-1015 Lausanne}
\email{varilly@rice.edu}
\urladdr{http://math.rice.edu/\~{}av15}
\address{Department of Mathematics, Box 1917, Brown University, Providence, RI
			02912, USA}
\email{bviray@math.brown.edu}
\urladdr{http://math.brown.edu/\~{}bviray}
\date{}
\subjclass[2010]{14F22; 14G05}
\begin{document}

	\begin{abstract}
		We show that Brauer classes of a locally solvable degree $4$ del Pezzo surface $X$ are \defi{vertical} for some projection away from a plane $g\colon X \dasharrow \PP^1$, i.e., that every Brauer class is obtained by pullback from an element of $\Br \kk(\PP^1)$. As a consequence, we prove that a Brauer class  obstructs the existence of a $k$-rational point if and only if all $k$-fibers of $g$ fail to be locally solvable, or in other words, if and only if $X$ is covered by curves that each have no adelic points. The proof is constructive and gives a simple and practical algorithm, distinct from that in~\cite{BBFL}, for computing all  classes in the Brauer group of $X$ (modulo constant algebras).
	\end{abstract}

	\maketitle
	
	\vspace{-.18in}
	
	\section{Introduction}\label{sec:Intro}

		Let $X$ be a smooth projective geometrically integral variety over a global field $k$; write $X(\A_k)$ for its adelic points, and let $\kk(X)$ denote its function field. In 1970, Manin used the Brauer group $\Br(X) := \HH^2_{\et}(X,\G_m)$ to define the \defi{Brauer set} $X(\A_k)^{\Br} \subseteq X(\A_k)$. He proved that this set contains the $k$-rational points of $X$, and that it can thus obstruct the existence of rational points~\cite{Manin-ICM}. This obstruction is known as the \defi{Brauer-Manin obstruction}.

		 In some examples, the Brauer group is ``vertical'', and it is possible to interpret the Brauer-Manin obstruction through rational maps. For instance, consider the degree $4$ del Pezzo surface $X$ over $\Q$ given as the complete intersection of the following two quadrics in $\PP^4$:
		 \begin{equation}
		 	\label{BSD}
			\begin{split}
		 	x_3x_4 &= x_2^2 - 5x_0^2, \\
		 	(x_3 + x_4)(x_3 + 2x_4) &= x_2^2 - 5x_1^2. 
			\end{split}
		 \end{equation}
		 Birch and Swinnerton-Dyer showed that $X(\A_\Q)^{\Br} = \emptyset$~\cite[Thm.\ 3]{BSD-dP4}. Furthermore, in this example, $\Br X \subseteq f^*\left(\Br\kk(\PP^1)\right)$, where $f\colon X \dasharrow \PP^1$ is the map $[x_0 : \cdots : x_4] \mapsto [x_3:x_4]$. Together, these facts imply that $f^{-1}(t)(\A_{\Q}) = \emptyset$ for all $t \in \PP^1(\Q)$, and thus the map $f$ witnesses that each adelic point of $X$ is not arranged in a way that is globally compatible.

		It is natural to ask for which classes of varieties $X$ one is guaranteed to have a \defi{vertical Brauer group}, i.e., that $\Br X \subseteq f^*\left(\Br\kk(Y)\right)$, for some dominant map $f\colon X\dasharrow Y$. We show that locally solvable del Pezzo surfaces of degree $4$ have this property.

	\begin{thm}\label{thm:MainIntro}
		Let $k$ be a global field of characteristic not $2$, and let $X$ be a del Pezzo surface of degree $4$ over $k$ such that $X(\A_k)\neq\emptyset$.  
		Then for all $\calA\in\Br X$, there exists a map $g = g_{\calA}\colon X\dasharrow \PP^1$, {obtained by projecting from a plane,} with at most two geometrically reducible fibers such that $\calA\in g^*\left(\Br\kk(\PP^1)\right)$. Moreover, there exists a map $f\colon X\dasharrow \PP^n$, {obtained by projecting from a linear space,} such that $\Br X$ is vertical with respect to $f$, i.e., 
		\[
			\Br X \subseteq f^*\left(\Br\kk(\PP^n)\right),
		\]
		where $n = \dim_{\F_2} \left(\Br X/\im \Br k\right)[2]$.
	\end{thm}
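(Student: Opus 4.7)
The strategy is to use the classical pencil-of-quadrics presentation of $X$. Write $X=Q_1\cap Q_2\subset\PP^4_k$ and consider the pencil $\{Q_\lambda\}_{\lambda\in\PP^1}$, whose discriminant $\det Q_\lambda$ is a degree $5$ form; its roots $\lambda_1,\dots,\lambda_5$ parameterize the singular members of the pencil, each of rank $4$ since $X$ is smooth. Each singular $Q_{\lambda_i}$ contains two $\PP^1$-families of planes (the two rulings), and projection from any such plane $\Pi$ defines a rational map $g_\Pi\colon X\dasharrow\PP^1$ whose fibers are the residual conics to $\Pi\cap X$ in the hyperplane sections through $\Pi$. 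This accounts for all the geometric conic bundle structures on $\Xbar$.

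Since $\Xbar$ is rational, $\Br\Xbar=0$, whence $\Br X/\Br_0 X\hookrightarrow H^1(k,\Pic\Xbar)$; this latter group is $2$-torsion for degree $4$ del Pezzo surfaces, so it is enough to treat a $2$-torsion class $\calA$. The crux of the proof is to produce, for each $\calA$, some $g_\Pi$ that verticalizes it. I would represent $\calA$ as a quaternion algebra $(\ell,m)\in\Br\kk(X)$ where $\ell$ is the pullback of an affine coordinate under some $g_\Pi$; then $\calA\in g_\Pi^*(\Br\kk(\PP^1))$ follows automatically. Producing such a symbol is a delicate cohomological computation. I would parameterize the $\F_2$-vector space of $2$-torsion classes by $\Gal(\kbar/k)$-invariant configurations of $(-1)$-curves on $\Xbar$, and exploit the explicit correspondence between such configurations and the singular members of the pencil to read off an admissible plane $\Pi$. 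The orbit structure on $\{\lambda_1,\dots,\lambda_5\}$, equivalently the \'etale algebra $k[\lambda]/\det Q_\lambda$, determines which ruling planes are rational; matching this against the catalog of possibilities for $H^1(k,\Pic\Xbar)$ in \cite{BBFL} would complete the argument.

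For the reducibility bound, a geometric fiber of $g_\Pi$ is reducible precisely when the corresponding ruling plane $\Pi'$ of $Q_{\lambda_i}$ meets $X$ in a pair of lines, which happens exactly when $\Pi'$ lies in another rank-$4$ member $Q_{\lambda_j}$ of the pencil. This gives at most four geometrically reducible fibers, indexed by $j\neq i$. The two rulings of each $Q_{\lambda_j}$ induce a natural involution grouping these four points into two pairs, and a short check shows that this pairing is Galois-equivariant, so the four points collapse to at most two closed points of $\PP^1_k$.

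For the second claim, let $\calA_1,\dots,\calA_n$ be an $\F_2$-basis of $(\Br X/\Br_0 X)[2]$ with associated planes $\Pi_1,\dots,\Pi_n$ produced by the first part, and set $L=\bigcap_{i=1}^n\Pi_i\subset\PP^4$. Projection from $L$ defines a rational map $f\colon X\dasharrow\PP(H^0(\PP^4,\OO(1))/I(L)_1)$ factoring through every $g_{\Pi_i}$, so each $\calA_i$, and hence all of $\Br X$, is vertical for $f$. The remaining input is a dimension check: one must show that the planes arising from the construction satisfy $\dim L = 3-n$, one more than the generic expectation, so that $f$ lands in $\PP^n$. I expect this rigidity to be the hardest step, and to follow from an inductive argument in which adjoining a new $\Pi_i$ to an independent family cuts the current intersection by exactly one dimension, a property traceable to the fact that the rulings of distinct singular pencil members are linked through the common vertex structure dictated by the pencil.
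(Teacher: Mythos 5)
Your proposal hinges on projections from ruling planes of the singular quadrics, i.e.\ on the conic bundle structures of $\Xbar$, and this is where it breaks down. A ruling plane of $V(Q_T)$ is defined, at best, over $\kappa(T)\left(\sqrt{\eps_T}\right)$, and when $\eps_T\notin\kappa(T)^{\times2}$ the Galois group interchanges the two rulings (Proposition~\ref{prop:GalAction}), so \emph{no} plane of either ruling, and no conic class $C_i$, is $k$-rational. But nontriviality of the Brauer class forces exactly this condition on the relevant points of $\scrS$ (condition~\eqref{eq:Assumption} and Proposition~\ref{prop:Nontriviality}); worse, $\scrS$ may have no $k$-points at all (in the example of \S\ref{subsec:Algorithm} its degree sequence is $(2,3)$), in which case $X$ carries no conic bundle structure over $k$ whatsoever, even though $\Br X/\Br_0 X\cong\Z/2\Z$. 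So the maps $g_\Pi$ you want to "read off" from the orbit structure typically do not exist over $k$ in precisely the interesting cases. (Verticality itself would be automatic for a conic bundle by Amitsur's theorem, so the "delicate cohomological computation" you defer is not the obstacle; rationality of the map is.) You also never use the hypothesis $X(\A_k)\neq\emptyset$, which in the paper is essential: via the Hasse principle for quadrics (Lemma~\ref{lem:Rank4Solvable}) it produces smooth $\kappa(T)$-points on the rank~$4$ quadrics, and these are what make the construction rational.

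The paper's map is different in kind: for a degree $2$ subscheme $\scrT\subseteq\scrS$ satisfying~\eqref{eq:Assumption}, one takes hyperplanes tangent to the quadrics $V(Q_T)$, $T\in\scrT$, at smooth points; the corresponding pair of linear forms (either two $k$-forms or a conjugate pair) cuts out a $k$-rational plane, and projection from it has genus~$1$ fibers, not conics. The class $\calA_\scrT=\left(L_\scrT/k,\,\ell^{-2}\prod_T\Norm_{\kappa(T)/k}(\ell_T)\right)$ is then visibly a pullback from $\Br\kk(\PP^1)$, and the real content is Theorem~\ref{thm:MainExplicit}, that \emph{every} nontrivial class has this form. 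Your reducible-fiber count also does not deliver the statement: a conic bundle on a degree $4$ del Pezzo surface has exactly $8-K_X^2=4$ geometrically degenerate fibers, and grouping them into two closed points of $\PP^1_k$ is not the same as having at most two \emph{geometrically} reducible fibers (the latter is what feeds into Corollary~\ref{cor:FiberEquivalence}); in the paper the bound is obtained by a genericity argument in the choice of tangent hyperplanes (Corollary~\ref{cor:Vertical}). Finally, for $\#(\Br X/\Br_0 X)=4$ the paper does not intersect planes from the first part: Theorem~\ref{thm:MainExplicit} produces three $k$-points $T_0,T_1,T_2\in\scrS(k)$ with pairwise condition~\eqref{eq:Assumption}, and $f$ is $x\mapsto[\ell_0(x):\ell_1(x):\ell_2(x)]$ for the three tangent forms, so the dimension "rigidity" you flag as the hardest step is not needed (and, as stated, is unsupported).
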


		\begin{remarks}\ 
			\begin{enumerate}
				\item For a del Pezzo surface of degree $4$, the group $\Br X/\im \Br k$ is isomorphic to either $0$, $\Z/2\Z$ or $\Z/2\Z\times \Z/2\Z$~\cite{Manin-CubicForms,SD-BrauerGroup}.
				\item Theorem~\ref{thm:MainIntro} holds over all fields of characteristic not $2$, after replacing the hypothesis that $X(\A_k) \neq \emptyset$ with an assumption on the rank 4 quadrics in a pencil associated to $X$; see~\S\ref{sec:VerticalElements} for the precise condition. 
				\item The proof of Theorem~\ref{thm:MainIntro} is constructive, and  therefore gives an algorithm for computing explicit Brauer classes of $X$. In contrast to the existing algorithm developed by Bright, Bruin, Flynn, and Logan~\cite{BBFL}, our method does not proceed by computing the Galois action on the set of exceptional curves of $X$. Our algorithm is easily explained and quite fast in practice; it essentially only requires finding a rational point (which is known to exist) on a few rank 4 quadrics. See~\S\ref{subsec:Algorithm} for details.
			\end{enumerate}
		\end{remarks}

		\noindent {\bf Related work.}
			\begin{enumerate}
				\item Colliot-Th\'el\`ene, Harari, and Skorobogatov have studied a similar question: they proved a criterion~\cite[Prop.~2.5]{CTHS-NormicBundle} for determining when $\Br X \subseteq f^*\left(\Br\kk(\PP^1)\right)$ for $f\colon X \to \PP^1$ a normic bundle, and gave several concrete examples of bundles satisfying their criterion~\cite[Cor.~2.6]{CTHS-NormicBundle}. The salient difference between this article and theirs is that they work with an {already specified} morphism $f$.
				\item Any element in $\Br X$ of the form $(L/k,f)$, where $L/k$ is a cyclic extension and $f \in k(X)^\times$, is plainly vertical for the map $X \dasharrow \PP^1,\ x\mapsto f(x)$. Swinnerton-Dyer showed that all nonconstant Brauer classes on a del Pezzo surface of degree $4$ are of this form~\cite[Ex. 3]{SD99}. However, his functions $f \in k(X)^\times$ are ratios of polynomials whose degrees are not bounded by the construction. Moreover, the computation of these functions, and hence the computation of the map $X\dasharrow \PP^1$, generally requires an explicit Galois descent of divisor classes.
			\end{enumerate}

		\smallskip

		As in the example above, the Brauer-Manin obstruction to the existence of $k$-points on $X$  manifests itself through the fibers of a map. 

		\begin{cor}\label{cor:FiberManifestation}
			Retain the notation from Theorem~\ref{thm:MainIntro}. Assume that $X(\A_k) \neq\emptyset$ and $X(\A_k)^{\Br} = \emptyset$.  Then 
			\[
				f^{-1}(t)(\A_k) =\emptyset \quad \forall t \in \PP^n(k).
			\]
		\end{cor}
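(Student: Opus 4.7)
The proof will go by contradiction. Suppose some $t \in \PP^n(k)$ has $f^{-1}(t)(\A_k) \neq \emptyset$ and pick an adelic point $(P_v) \in f^{-1}(t)(\A_k) \subseteq X(\A_k)$. The aim is to show $(P_v) \in X(\A_k)^{\Br}$, which will contradict the hypothesis $X(\A_k)^{\Br} = \emptyset$.

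Fix an arbitrary $\calA \in \Br X$. By Theorem~\ref{thm:MainIntro}, we may write $\calA = f^*\calB$ for some $\calB \in \Br \kk(\PP^n)$. Since $f(P_v) = t$ for every place $v$ and $t \in \PP^n(k)$ is a global point, functoriality of the Brauer evaluation gives
\[
\calA(P_v) \;=\; \calB(t) \otimes_k k_v \quad \text{in } \Br k_v,
\]
so the local classes $\calA(P_v)$ are the localizations of a single global class $\calB(t) \in \Br k$. Hasse-Brauer-Noether reciprocity then yields $\sum_v \inv_v \calA(P_v) = \sum_v \inv_v \calB(t) = 0$, and since $\calA$ was arbitrary, $(P_v) \in X(\A_k)^{\Br}$, completing the contradiction.

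The main technical point to justify is the identity $\calA(P_v) = \calB(t) \otimes_k k_v$ in the case when $\calB$ happens to be ramified at $t$: in this case $\calB(t)$ is not a priori defined as an element of $\Br k$, although $\calA(P_v)$ certainly is well-defined because $\calA$ extends to an unramified class on the smooth proper surface $X$. Concretely, I would handle this either by replacing $\calB$ with a representative of the coset modulo $\ker(f^*)$ that is regular at $t$, or by defining $\calB(t) \in \Br k$ as the class whose pullback along $f^{-1}(t) \to \Spec k$ agrees with the restriction $\calA|_{f^{-1}(t)}$. The key structural fact making the latter sensible is that since $\calA$ factors through $\PP^n$ generically, its restriction to any fiber over a $k$-point of $\PP^n$ must come from $\Br k$; this is precisely the vertical nature of $\calA$ with respect to $f$, and is the step I would write out most carefully, probably by passing to a resolution of indeterminacy of $f$ so that fibers are smooth.
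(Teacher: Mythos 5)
Your argument is fine over the locus where $\calB$ is regular --- there it coincides with the paper's proof --- but the case $t$ in the ramification locus of $\calB$ contains a genuine gap, and it is exactly the case where the paper has to do real work. Verticality of $\calA$ only tells you that $\calA$ is generically pulled back; it gives constancy of $\calA$ on fibers over $k$-points of the open set $U$ where $\calB$ extends, and says nothing about fibers over ramification points. Your ``key structural fact'' --- that $\calA|_{f^{-1}(t)}$ must come from $\Br k$ for \emph{every} $k$-point $t$ --- is not a consequence of verticality and is not true in general: here the fiber over a ramified $t$ is the union of two conics, conjugate over a quadratic extension $L_\scrT/k$, meeting in two points. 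The Brauer group of such a geometrically reducible curve is strictly larger than the image of $\Br k$ (the cycle in the dual graph contributes a torus to the Picard group, and $\HH^1$ of its twist is essentially $k^\times/\Norm_{L_\scrT/k}L_\scrT^\times$), and the evaluation of $\calA$ at the two intersection points can differ; so one cannot define ``$\calB(t)$'' the way you propose. Your other suggested fixes also do not work: there is no reason a representative of $\calA$ regular at $t$ exists modulo $\ker(f^*)$ (if it always did, the ramified case would be vacuous and the class would be vertical over a larger open set than it actually is), and resolving the indeterminacy of $f$ does not make the degenerate fibers smooth --- they remain reducible, which is the whole source of the problem.

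The paper handles this case by a different mechanism, and with a different conclusion. When $t$ lies in the ramification locus, it uses Theorem~\ref{thm:MainExplicit} to assume $\calB = \calA_\scrT$, notes that $\scrT(k)$ must then consist of two $k$-points, and observes that the fiber over $t$ is the union of two curves defined over a nontrivial quadratic extension meeting in exactly two points. Local solvability of this fiber then forces those two intersection points to be $k$-rational (at a place where the quadratic extension does not split, any local point must lie on the intersection), so $X(k)\neq\emptyset$ and hence $X(\A_k)^{\Br}\supseteq X(k)\neq\emptyset$. Note the contradiction is obtained from a rational point on $X$, \emph{not} by showing that your chosen adelic point $(P_v)$ lies in $X(\A_k)^{\Br}$ --- which, for the reasons above, need not hold. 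To repair your write-up you would need to add this (or an equivalent) geometric argument for the ramified fibers rather than appeal to constancy of $\calA$ on them.
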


		\begin{remark}
			A stronger statement is true. If $n > 0$, then the map $f\colon X \dasharrow \PP^n$ in Theorem~\ref{thm:MainIntro} is given by projecting away from a codimension $n+1$ linear subspace $W$. The proof of Corollary~\ref{cor:FiberManifestation} shows that $(X \cap L)(\A_k) = \emptyset$ for any codimension $n$ linear space $L$ that contains $W$. Since $f^{-1}(t) \subseteq X\cap L$ for some such $L$, the corollary follows from this stronger statement.
		\end{remark}

		In addition, using work of Colliot-Th\'el\`ene and Skorobogatov~\cite{CTSko-FibrationRevisited} and their earlier work with Swinnerton-Dyer~\cite{CTSkoSD-Crelle} (which both build on~\cite{CTSansuc-Schinzel,CTSD-HPWAForSB}), we also obtain a partial converse to Corollary~\ref{cor:FiberManifestation}.

		\begin{cor}\label{cor:FiberEquivalence}
			Retain the notation from Theorem~\ref{thm:MainIntro}. Assume that $X(\A_k) \neq\emptyset$ and $\Br X/\im \Br k = \langle\calA\rangle\isom \Z/2\Z$.  Then
			\[
				\overline{\{t\in\PP^1(k) : {g_\calA}^{-1}(t)(\A_k)\neq\emptyset\}} = 
				{g_\calA}(X(\A_k)^{\Br}),
			\]
			where the closure takes place in the adelic topology.
		\end{cor}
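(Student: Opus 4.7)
The corollary asserts equality of two subsets of $\PP^1(\A_k)$, and I plan to establish the two inclusions separately. By Theorem~\ref{thm:MainIntro}, write $\calA = g_\calA^* B$ with $B \in \Br \kk(\PP^1)$; by purity, $B$ is unramified on $\PP^1$ away from the (at most two) points of $\PP^1$ over which $g_\calA$ has geometrically reducible fibers.

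For the inclusion $\overline{\{t\in\PP^1(k) : g_\calA^{-1}(t)(\A_k)\neq\emptyset\}} \subseteq g_\calA(X(\A_k)^{\Br})$, I first show that the right-hand side is already closed in $\PP^1(\A_k)$. Resolving the base locus of $g_\calA$ produces a proper birational morphism $\widetilde X \to X$ and an extension $\widetilde g\colon\widetilde X \to \PP^1$; since $\widetilde X(\A_k) \to X(\A_k)$ is surjective and compatible with the Brauer-Manin pairings, $g_\calA(X(\A_k)^{\Br}) = \widetilde g(\widetilde X(\A_k)^{\Br})$, which is the continuous image of a compact set and hence closed. It therefore suffices to show that every $t \in \PP^1(k)$ with $g_\calA^{-1}(t)(\A_k)\neq\emptyset$ and at which $B$ is unramified lies in $g_\calA(X(\A_k)^{\Br})$. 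For such $t$ and any $(P_v) \in g_\calA^{-1}(t)(\A_k)$, the evaluation $\calA(P_v)$ equals the image of $B(t)\in\Br k$ in $\Br k_v$ for every place $v$, so global reciprocity gives $\sum_v \inv_v \calA(P_v) = 0$, and since $\calA$ generates $\Br X/\im \Br k$ we conclude $(P_v) \in X(\A_k)^{\Br}$.

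For the reverse inclusion, let $(P_v) \in X(\A_k)^{\Br}$ and set $\tau := (g_\calA(P_v))_v \in \PP^1(\A_k)$; the task is to approximate $\tau$ adelically by $t\in\PP^1(k)$ with $g_\calA^{-1}(t)(\A_k)\neq\emptyset$. After base-locus resolution, $\widetilde g\colon \widetilde X \to \PP^1$ is a proper morphism from a smooth projective rational surface whose generic fiber is a curve of genus~$1$ (the residual intersection of a general hyperplane through the projection plane with $X$, i.e., an intersection of two quadrics in $\PP^3$) and which has at most two geometrically reducible fibers. Moreover, by Theorem~\ref{thm:MainIntro} the vertical Brauer group of $\widetilde g$ contains all of $\Br X$. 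Feeding this data into the fibration theorems of Colliot-Th\'el\`ene--Skorobogatov~\cite{CTSko-FibrationRevisited} and Colliot-Th\'el\`ene--Skorobogatov--Swinnerton-Dyer~\cite{CTSkoSD-Crelle} (which build on~\cite{CTSansuc-Schinzel,CTSD-HPWAForSB}) yields precisely the required adelic approximation.

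The main obstacle is the reverse inclusion, where one must check that the chosen fibration theorem applies unconditionally in this setting: the bound of two geometrically reducible fibers and the identification of the vertical Brauer group with $\Br X$ furnished by Theorem~\ref{thm:MainIntro} are precisely what make the result unconditional (no Schinzel-type hypothesis being required). The forward inclusion, by contrast, reduces to global reciprocity together with a compactness argument.
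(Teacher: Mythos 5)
Your overall route is the paper's: the substance of both arguments is the appeal to the unconditional fibration theorems, and your reverse inclusion is exactly the paper's citation of~\cite[Thm.~2.2.1(a)]{CTSkoSD-Crelle} and~\cite[proof of Thm.~A]{CTSko-FibrationRevisited}. There is, however, a genuine gap in your forward inclusion. Writing $S := \{t\in\PP^1(k) : g_\calA^{-1}(t)(\A_k)\neq\emptyset\}$, closedness of $g_\calA(X(\A_k)^{\Br})$ reduces the inclusion $\overline{S}\subseteq g_\calA(X(\A_k)^{\Br})$ to showing $S\subseteq g_\calA(X(\A_k)^{\Br})$ for \emph{all} of $S$, not just for those $t$ at which $B$ is unramified; your sentence ``it therefore suffices to show\dots at which $B$ is unramified'' silently discards the ramification points, and these may lie in $S$ (each such point lies in $\overline{S}$ as soon as it lies in $S$). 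This matters precisely in the case where $\scrT$ consists of two $k$-rational points: there the ramification locus of $B$ is the pair of rational points over which the fiber of $g_\calA$ is geometrically reducible, and nothing prevents those fibers from being locally solvable. For such $t$ the reciprocity computation does not apply (there is no well-defined $B(t)\in\Br k$), and one needs the separate argument from the paper's proof of Corollary~\ref{cor:FiberManifestation}: the fiber is a union of two conics conjugate over $L_\scrT$ meeting in two points, and local solvability forces those intersection points to be $k$-rational, so the fiber contains $k$-points of $X$ and hence $t\in g_\calA(X(k))\subseteq g_\calA(X(\A_k)^{\Br})$. (When $\scrT$ is a single degree $2$ point the ramification locus has no $k$-points, so your reduction is harmless in that case.)

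A lesser point: your blanket ``feeding this data into the fibration theorems'' for the reverse inclusion should be split into the two cases the paper distinguishes, since the hypotheses that make these results unconditional are different. When $\scrT(k)=\{t_0,t_1\}$, the at most two non-split fibers lie over $k$-rational points and their components are split by the quadratic extension $L_\scrT$, which is what~\cite[Thm.~2.2.1(a)]{CTSkoSD-Crelle} requires; when $\scrT$ is a single closed point of degree $2$, one must instead invoke the proof of Theorem~A in~\cite{CTSko-FibrationRevisited}. This is a matter of verifying hypotheses rather than a flaw in the approach, and is at roughly the level of detail of the paper's own proof, but as written your argument does not check them.
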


		A special case of an important conjecture in the study of rational points on varieties, due to Colliot-Th\'el\`ene and Sansuc~\cite{CTS80}, says that for a degree $4$ del Pezzo surface $X$, we have $X(k) \neq \emptyset$ as soon as $X(\A_k)^{\Br} \neq \emptyset$. Assuming Schinzel's hypothesis and the finiteness of Tate-Shafarevich groups for elliptic curves, Wittenberg proved the conjecture for a general such $X$ over a number field $k$, as well as in many other cases~\cite[Thm.~3.36]{Wittenberg-SpringerThesis}. All the cases handled by Wittenberg require that $\Br X = \Br k$, and the proof proceeds by showing that certain fibrations of genus $1$ curves satisfy the Hasse principle. 

		If $\Br X/\im\Br k \cong \Z/2\Z$, then the generic fiber of the map $f$ given in the proof of Theorem~\ref{thm:MainIntro} is a curve of genus $1$. We show that for certain del Pezzo surfaces over number fields Wittenberg's method, which builds on ideas of Swinnerton-Dyer~\cite{SD-TwoQuadrics,BenderSD} that were formalized in~\cite{CTSkoSD-Inventiones,CT-BenderSD}, applies to these fibrations, and therefore $\overline{X(k)} = X(\A_k)^{\Br}$ for these surfaces, conditional on Schinzel's hypothesis and finiteness of Tate-Shafarevich groups. Interestingly, to apply Wittenberg's method, we must fix $f$ such that there are \emph{exactly} $6$ geometrically reducible fibers. This is in contrast to the condition that $g_\calA$ has at most $2$ geometrically reducible fibers, which is needed to prove Corollary~\ref{cor:FiberEquivalence}.  Indeed, this requirement of additional geometrically reducible fibers is part of the reason why Wittenberg's method, {in its current form}, does not apply to all degree $4$ del Pezzo surfaces with $\Br X/\im\Br k \cong \Z/2\Z$.

		\subsection*{Outline} We fix notation in \S\ref{subsec:Notation}. In \S\ref{sec:PicGens} we study the rank $4$ quadrics in the pencil associated to a del Pezzo surface $X$ of degree $4$, and use~\cite{BBFL} to give a convenient set of generators for the geometric Picard group of $X$. In \S\ref{sec:PicGalois}, we use~\cite{KST-dP4s} to describe the Galois action on these generators. 
		
		The heart of the paper is \S\ref{sec:VerticalElements}. In it, we construct explicit Brauer classes (\S\ref{subsec:Construction}) and use them to prove an explicit version of Theorem~\ref{thm:MainIntro} for all fields of characteristic not $2$ (\S\S\ref{subsec:Nontriviality},\ref{subsec:MainResult}). In \S\ref{sec:Computations}, we discuss computational applications of the results from \S\ref{sec:VerticalElements}: in \S\ref{subsec:Algorithm}, we describe an algorithm to compute nonconstant Brauer classes of $X$, and in \S\ref{subsec:ParameterSpaces} we show how to construct parameter spaces of degree $4$ del Pezzo surfaces with a nonconstant Brauer class.
		
		In \S\ref{sec:Arithmetic} we restrict our attention to the case where the base field is a global field of characteristic not $2$, and prove Theorem~\ref{thm:MainIntro} and Corollaries~\ref{cor:FiberManifestation} and~\ref{cor:FiberEquivalence}. In addition, in \S\ref{subsec:Eval}, we prove that the evaluation maps $\ev_\calA$ for $\calA \in \Br X$ are constant for a large set of places of $k$. In \S\ref{subsec:ConditionD} we show that the Brauer-Manin obstruction to the Hasse principle and weak approximation is the only one for certain del Pezzo surfaces of degree $4$ over number fields, assuming Schinzel's hypothesis and finiteness of Tate-Shafarevich groups. Finally, in \S\ref{sec:Order4}, we discuss a partial improvement to Theorem~\ref{thm:MainIntro} in the case where $\#(\Br X/\im\Br k) = 4$.
		
		\section*{Acknowledgements} We thank David Harari for pointing out that many of the arguments in this paper hold for an arbitrary field of characteristic not $2$, and not just for global fields of characteristic not 2. We also thank Olivier Wittenberg for pointing out Remark~\ref{rmk:VerticalConic}, as well as for discussions outlining how to use~\cite{Wittenberg-SpringerThesis} to prove Theorem~\ref{thm:BM only one}, and Jean-Louis Colliot-Th\'el\`ene for helpful comments.  Much of this paper was written during the semester program ``Rational points and algebraic cycles'' at the Centre Interfacultaire Bernoulli; we are grateful to the staff and the organizers for their support.
		
	\section{The Picard group of del Pezzo surfaces of degree $4$}
	\label{sec:Picdp4}

		\subsection{Notation}\label{subsec:Notation}

			We fix much of the notation that will remain in force throughout the paper. Let $k$ be a field of characteristic not $2$, and fix a separable closure $\kbar$ of $k$. Let $G_k$ denote the absolute Galois group $\Gal(\kbar/k)$. For any homogeneous polynomial $F$ in five variables, we write $V(F)$ for the corresponding variety in $\PP^4$. For any smooth projective geometrically integral $k$-variety, we use $\sim$ to denote linear equivalence of divisors over $\kbar$.

			Let $X$ be a del Pezzo surface of degree $4$ over a field $k$ of characteristic not $2$.  By embedding it anticanonically, we will view $X$ as a smooth complete intersection of two quadrics, $Q$ and $\widetilde{Q}$, in $\PP^4$.  The forms $Q$ and $\widetilde{Q}$ define a pencil $\{\lambda Q + \mu \widetilde{Q} : [\lambda:\mu] \in \PP^1\}$ with five degenerate geometric fibers, each a rank $4$ quadric~\cite[Prop. 3.26]{Wittenberg-SpringerThesis}.  

			We write $\scrS\subseteq\PP^1$ for the degree $5$ subscheme defining the degeneracy locus of the pencil. We denote the $\kbar$-points of $\scrS$ by $t_0, \ldots, t_4$; let $k(t_i)$ be the smallest subfield of $\kbar$ that contains $t_i$, and let $Q_i$ denote the rank $4$ quadric associated to $t_i$, which is defined over $k(t_i)$.  We write $\eps_{t_i}$ for the discriminant of the smooth rank 4 quadric obtained by restricting $V(Q_i)$ to a hyperplane $H \subseteq \PP^4$ not containing the vertex of $V(Q_i)$. Throughout, we consider $\eps_{t_i}$ as an element of $k(t_i)^{\times}/k(t_i)^{\times2}$; as such, it does not depend on the choice of $H$~\cite[\S3.4.1]{Wittenberg-SpringerThesis}.

			Similarly, for any $T \in \scrS$, we write $\kappa(T)$ for the residue field of $T$, $Q_T$ for the corresponding rank 4 quadric, and $\eps_T \in \kappa(T)^{\times}/\kappa(T)^{\times2}$ for the discriminant of the restriction of $V(Q_T)$ to a suitable hyperplane as above.

			As in the introduction, the Brauer group of $X$ is $\Br X := \HH^2_{\et}(X,\G_m)$. We write $\Br_0 X := \im \Br k \to \Br X$ for the subgroup of constant algebras. For a dominant map $f\colon X \dasharrow Y$, we define the vertical Brauer group of $X$ with respect to $f$ as
			\[
				\Br^{(f)}_{\verti} X := \Br X \cap f^*\left(\Br \kk(Y)\right).
			\]
			More generally, a map $f\colon X\dasharrow Y$ gives rise to pullback map $f^*\colon \Br \calO_{Y,f(\eta)} \to \Br \kk(X)$, where $\calO_{Y,f(\eta)}$ is the local ring of $Y$ at the image of the generic point $\eta$ of $X$. In this case, we define
			\[
				\Br^{(f)}_{\verti} X = \Br X \cap f^*\left(\Br\calO_{Y,f(\eta)}\right).
			\]

		\subsection{Linear spaces on rank $4$ quadrics}\label{sec:Rank4}

		\begin{lemma}\label{lem:Rank4NormalForm}
			Assume that $V(Q_T)$ has a smooth $\kappa(T)$-point $P$.  Then there exists a set of linearly independent $\kappa(T)$-linear forms $\ell_1, \ell_2, \ell_3,$ and $\ell_4$ such that $\ell_2(P) = \ell_3(P) = \ell_4(P) = 0$ and
			\[
				cQ_T = \ell_1\ell_2 - \ell_3^2 + \eps_T\ell_4^2,
			\]
			for some $c \in \kappa(T)^{\times}$. In particular, if $H$ is a hyperplane tangent to $V(Q_T)$ at $P$, then $X\cap H$ is geometrically reducible and $V(Q_T)\cap H = L \cup L'$ for some planes $L, L'$ defined over $\kappa(T)\left(\sqrt{\eps_T}\right)$.
		\end{lemma}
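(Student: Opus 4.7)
The plan is to choose $\kappa(T)$-linear coordinates on $\PP^4$ adapted to $P$, bring $Q_T$ into a normal form, and then read off both parts of the statement by direct algebra.

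First, I pick coordinates so that $P=[1:0:0:0:0]$ and the tangent hyperplane $H$ of $V(Q_T)$ at $P$ equals $V(x_1)$; both changes are $\kappa(T)$-linear, and $H$ is unique and defined over $\kappa(T)$ because $P$ is a smooth $\kappa(T)$-point. The vanishing $Q_T(P)=0$ kills the coefficient of $x_0^2$, while matching $\nabla Q_T(P)$ with the normal of $H$ forces the coefficients of $x_0x_2$, $x_0x_3$, $x_0x_4$ to vanish and the coefficient of $x_0x_1$ to be nonzero. After scaling and a further $\kappa(T)$-linear change absorbing the $x_1$-linear terms into $x_0$ (which preserves $P$ and $H$), I reduce to
\[
Q_T = x_0x_1 + q_0(x_2,x_3,x_4)
\]
for a ternary quadratic form $q_0$ over $\kappa(T)$. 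The Gram matrix is then block-diagonal, giving $\rk Q_T = 2 + \rk q_0$, so the rank-$4$ hypothesis forces $\rk q_0 = 2$. Since $\Char\kappa(T)\neq 2$, I diagonalize $q_0 = \alpha u^2 + \beta v^2$ with $\alpha,\beta\in\kappa(T)^\times$ and $\kappa(T)$-linearly independent forms $u,v$ in $x_2,x_3,x_4$.

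The main bookkeeping step is the identification of $\eps_T$ modulo squares. Extending $u,v$ to a basis of $\kappa(T)x_2+\kappa(T)x_3+\kappa(T)x_4$ by a form $w$ spanning the kernel direction of $q_0$, the vertex of $V(Q_T)$ becomes $[0:0:0:0:1]$ in $(x_0,x_1,u,v,w)$-coordinates, so $V(w)$ avoids the vertex. Restricting to $V(w)\cong\PP^3$ gives the smooth quadric $x_0x_1 + \alpha u^2 + \beta v^2$, whose Gram matrix has determinant $-\alpha\beta/4$. Thus $\eps_T\equiv -\alpha\beta\pmod{\kappa(T)^{\times 2}}$. Choosing $s\in\kappa(T)^\times$ with $s^2 = -\alpha\beta/\eps_T$, set
\[
c := -\alpha,\quad \ell_1 := -\alpha x_0,\quad \ell_2 := x_1,\quad \ell_3 := \alpha u,\quad \ell_4 := sv.
\]
A direct check gives $cQ_T = \ell_1\ell_2 - \ell_3^2 + \eps_T\ell_4^2$, with $\ell_2(P)=\ell_3(P)=\ell_4(P)=0$ and $\ell_1,\ldots,\ell_4$ linearly independent.

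For the moreover statement, the tangent hyperplane $H=V(\ell_2)$ meets $V(Q_T)$ in the locus cut out by $\ell_2=0$ and $-\ell_3^2 + \eps_T\ell_4^2 = 0$, and the latter factors over $\kappa(T)\bigl(\sqrt{\eps_T}\bigr)$ as $-(\ell_3-\sqrt{\eps_T}\ell_4)(\ell_3+\sqrt{\eps_T}\ell_4)$. Hence $V(Q_T)\cap H = L\cup L'$ where $L = V(\ell_2,\ell_3-\sqrt{\eps_T}\ell_4)$ and $L' = V(\ell_2,\ell_3+\sqrt{\eps_T}\ell_4)$ are planes defined over $\kappa(T)\bigl(\sqrt{\eps_T}\bigr)$. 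Since $X\subseteq V(Q_T)$, intersecting with $H$ yields $X\cap H = (X\cap L)\cup(X\cap L')$, which is geometrically reducible. The main obstacle I anticipate is the sign-and-squares bookkeeping in identifying $\eps_T$ with $-\alpha\beta$; beyond that everything is direct linear algebra.
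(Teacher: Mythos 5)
Your proof is correct, but it runs in the opposite direction from the paper's. The paper starts from the geometry: it takes the two planes $L,L'$ through $P$ lying on the rank-$4$ cone $V(Q_T)$, notes they are interchanged by $\Gal\bigl(\kappa(T)(\sqrt{d})/\kappa(T)\bigr)$, and by a Galois-descent argument on the linear forms cutting them out (showing the common form $\ell$ is proportional to a $\kappa(T)$-form, and that the product of the conjugate forms is $\ell_3^2-d\ell_4^2$) assembles the normal form, identifying $d$ with $\eps_T$ only at the very end via the discriminant. You instead produce the normal form directly by linear algebra: adapt coordinates so $P=[1:0:0:0:0]$ and the tangent hyperplane is $V(x_1)$, split off the hyperbolic plane $x_0x_1$, use the rank-$4$ hypothesis to see the residual form in $x_2,x_3,x_4$ has rank $2$ and diagonalize it as $\alpha u^2+\beta v^2$, and then pin down the square class by computing the Gram determinant $-\alpha\beta/4$ of the restriction to the hyperplane $V(w)$ avoiding the vertex (your scalar $s$ correctly absorbs the choice of representative of $\eps_T$, and your sign convention matches the paper's, as one checks on $y_1y_2-y_3^2+dy_4^2$). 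The planes $L,L'$ then fall out of the factorization $-\ell_3^2+\eps_T\ell_4^2=-(\ell_3-\sqrt{\eps_T}\,\ell_4)(\ell_3+\sqrt{\eps_T}\,\ell_4)$, which is exactly the "in particular" statement. Your route is more elementary and computational, avoiding the descent argument for the pair of planes; the paper's route makes the conjugate pair of planes the primary object, which is the structure it exploits in the rest of \S\ref{sec:PicGens}. Both are complete; the only point worth stating explicitly in your write-up is that neither plane is contained in a second quadric of the pencil (else $X$ would contain a plane), so that $X\cap H$ really is the union of two conics and hence geometrically reducible.
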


		\begin{proof}
		 	There are exactly two planes $L$, $L'$ through $P$ that lie on $V(Q_T)$. Since $P$ is defined over $\kappa(T)$, the absolute Galois group $G_{\kappa(T)}$ fixes $\{L, L'\}$.  Thus $L$ and $L'$ are defined over $\kappa(T)\left(\sqrt{d}\right)$ for some $d$. If $d$ is a nonsquare, then we write $\sigma$ for the nontrivial element of $\Gal\left(\kappa(T)\left(\sqrt{d}\right)/\kappa(T)\right)$; note that $\sigma(L) = L'$.
		 
		 	Let $\ell,\tilde{\ell},\ell',\tilde{\ell'}$ be such that $L = V(\ell, \tilde{\ell})$ and $L' = V(\ell', \tilde\ell')$.  Since $\{P\} = L \cap L'$, the span of $\ell$ and $\tilde\ell$ has a one-dimensional subspace in common with the span of $\ell'$ and $\tilde\ell'$.  Without loss of generality, we may assume that $\ell = \tilde\ell'$ so $L' = V(\ell, \ell').$  If $d$ is a nonsquare, then since $\sigma(L) = L'$, $\sigma(\tilde \ell)$ must be in the span of $\ell$ and $\ell'$.  Further, since $L$ is not defined over $\kappa(T)$, $\sigma(\tilde \ell)$ is not a multiple of $\ell$. Therefore, we may assume that $\ell' = \sigma(\tilde \ell)$.
			
		 	Since $L$ lies on $V(Q_T)$, $Q_T$ must equal $\ell\cdot f + \tilde\ell \cdot \tilde f$ for some linear forms $f$ and $\tilde f$.  Similarly, since $L'$ lies on $V(Q_T)$, $Q_T$ must equal $\ell\cdot g + \ell'\cdot g'$ for some other linear forms $g$ and $g'$.  Therefore, modulo $\ell$, $cQ_T \equiv -\ell'\tilde\ell$ for some $c\in \kappa(T)^{\times}$. Since $\ell'\tilde\ell$ is Galois invariant, there exist linear forms $\ell_3,\ell_4$ defined over $\kappa(T)$ such that $\ell'\tilde\ell = \ell_3^2 - d\ell_4^2$.
		 
		 	To complete the proof, it remains to show that $\ell$ is a scalar multiple of a linear form defined over $\kappa(T)$.  This is clear if $d$ is a square.  Otherwise, $\ell$ is a linear combination of $\sigma(\ell)$ and $\sigma(\tilde\ell)$. Assume that this linear combination involves a nonzero multiple of $\sigma(\tilde\ell)$.  Then, by applying $\sigma$, we see that $\tilde\ell$ can also be expressed as a linear combination of $\ell$ and $\sigma(\ell)$.  This implies that $L = L'$, a contradiction.  Thus, $\ell$ must be a scalar multiple of $\sigma(\ell)$, say $\ell = \lambda \sigma(\ell)$ for some $\lambda\in \kappa(T)\left(\sqrt{d}\right)$.  Write $\ell$ as $f + \sqrt{d}g$, where $f$ and $g$ are linear forms over $\kappa(T)$.  After expanding the relation that $\ell = \lambda\sigma(\ell)$, we see that $f$ and $g$ are scalar multiples of each other, and thus that $\ell$ is a scalar multiple of a linear form defined over $\kappa(T)$. Since the discriminant of $\ell_1\ell_2 - \ell_3^2 + d\ell_4^2 = cQ_T$ is $d$, $d$ must differ from $\eps_T$ by a square.  This completes the proof.
		\end{proof}
		
		\subsection{Generators for the geometric Picard group}
		\label{subsec:Generators}\label{sec:PicGens}

			For $i\in\{0, \ldots, 4\}$, write $W_i := V(Q_i)$ and let $k_i$ be a finite extension of $k(t_i)$ such that
			\begin{equation*}\label{eq: ki}
				W_i\textup{ has a smooth }k_i\textup{-point},  
				\quad\textrm{and}\quad
				[k_i(\sqrt{\eps_{t_i}}):k_i] = 
				[k(t_i)(\sqrt{\eps_{t_i}}):k(t_i)].
			\end{equation*}
			For any smooth point $P_i\in W_i(k_i)$, let $H_{P_i}$ be the hyperplane  tangent to $W_i$ at $P_i$.  By Lemma~\ref{lem:Rank4NormalForm}, we have $W_i\cap H_{P_i} = L_{P_i}\cup L_{P_i}'$, where $L_{P_i}$ and $L_{P_i}'$ are planes defined over $k_i\left(\sqrt{\eps_{t_i}}\right)$.

			Define 
			\[
				C_{P_i}  = X \cap L_{P_i} \quad\textrm{and}\quad 
				C_{P_i}' = X \cap L_{P_i}'.
			\]
			As in~\cite[Theorem~4]{BBFL}, note that
			\[
				C_{P_i} = X \cap L_{P_i} = W \cap W_i \cap L_{P_i} = 
				W \cap L_{P_i},
			\]
			where $W$ is a quadric hypersurface associated to a smooth quadric in the pencil defined by $Q$ and $\widetilde{Q}$.  Since $W \cap L_{P_i}$ is the intersection of a smooth quadric 3-fold with a plane, the curve $C_{P_i}$ is a conic; the same is true for $C_{P_i}'$.  By construction, these conics are defined over the field $k_i\left(\sqrt{\eps_{t_i}}\right)$.  

			A different choice of smooth point $\tilde P_i \in W_i(k_i)$ gives rise to two different planes $L_{\tilde P_i}$ and $L_{\tilde P_i}'$, and hence two different conics $C_{\tilde P_i}$ and $C_{\tilde P_i}'$.  Since the planes $L_{\tilde P_i}$, $L_{\tilde P_i}'$, $L_{P_i}^{\phantom{'}}$ and $L_{P_i}'$ are contained in two pencils, without loss of generality, we may assume that $C_{\tilde P_i}\sim C_{P_i}$ and $C_{\tilde P_i}' \sim C_{P_i}'$.  In particular, for any element $\sigma \in G_{k(t_i,\sqrt{\eps_{t_i}})}$, we have $C_{\sigma(P_i)} = \sigma(C_{P_i})$.  Henceforth, we write $C_i$ and $C_i'$ for the classes in $\Pic \Xbar$ of $C_{P_i}$ and $C_{P_i'}$ respectively.  This discussion shows that $C_i$ and $C_i'$ are defined over $k(t_i, \sqrt{\eps_{t_i}}).$

			\begin{proposition}\label{prop:PicGens}
				After possibly interchanging $C_i$ and $C_i'$ for some indices $i$, we may assume that the group $\Pic(\Xbar) \cong \Z^6$ is freely generated by the classes of the following divisors
				\begin{equation}\label{eq: Picgens}
					\frac{1}{2}(H + C_0 + C_1 + C_2 + C_3 + C_4),\; 
					C_0,\; C_1,\; C_2,\; C_3, \textrm{and } C_4,
				\end{equation}
				where $H$ is a hyperplane section of $X$, and the $C_i$ are conics as above.
			\end{proposition}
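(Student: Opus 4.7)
The plan is to compute the relevant intersection numbers from the geometric description of the $C_i$, perform a parity-type interchange to ensure $H + \sum_i C_i \in 2\Pic(\Xbar)$, and then verify that the resulting six classes of~\eqref{eq: Picgens} form a $\Z$-basis of the unimodular lattice $\Pic(\Xbar)\cong\Z^6$.

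Since each $C_{P_i}$ is a plane section of the degree-$4$ surface $X$, it is a curve of degree $2$, so $H\cdot C_i = 2$ and the adjunction formula gives $C_i^2 = 0$. The decomposition $X\cap H_{P_i} = C_{P_i}\cup C_{P_i}'$ yields $C_i + C_i'\sim H$, and hence $C_i\cdot C_i' = 2$. For $i\neq j$, the quadrics $W_i$ and $W_j$ are distinct members of the pencil, so combining the normal form of Lemma~\ref{lem:Rank4NormalForm} with~\cite[Thm.~4]{BBFL} yields $C_i\cdot C_j = 1$.

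For the divisibility, replacing $C_i$ by $C_i' = H - C_i$ modifies $\sum_j C_j$ by $H - 2C_i$, and hence changes $H + \sum_j C_j$ modulo $2\Pic(\Xbar)$ by $H$. Since $H = -K_X$ has intersection $1$ with any of the $16$ $(-1)$-curves on $\Xbar$, it is nonzero in $\Pic(\Xbar)/2\Pic(\Xbar)$; therefore interchanging $C_i\leftrightarrow C_i'$ for at most one index suffices to arrange $H+\sum_i C_i \in 2\Pic(\Xbar)$, so that $D := \tfrac{1}{2}(H+\sum_i C_i)$ is a well-defined class in $\Pic(\Xbar)$.

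It remains to verify that $(D, C_0, \ldots, C_4)$ is a $\Z$-basis. From the intersection data above one computes
\[
 D\cdot C_i = \tfrac{1}{2}\bigl(H\cdot C_i + \textstyle\sum_j C_j\cdot C_i\bigr) = 3, \qquad D^2 = \tfrac{1}{4}\bigl(H^2 + 2H\cdot \textstyle\sum_i C_i + (\sum_i C_i)^2\bigr) = 11.
\]
A direct row reduction on the resulting $6\times 6$ Gram matrix (e.g., subtracting the $C_0$-row from each $C_i$-row for $i\geq 1$, and then clearing the first row) produces determinant $\pm 1$, and since $\Pic(\Xbar)\cong I_{1,5}$ is unimodular, six classes with unimodular Gram matrix automatically form a $\Z$-basis. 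The main obstacle is step one --- specifically, establishing $C_i\cdot C_j = 1$ for $i\neq j$ --- which is the essential geometric content encapsulated by~\cite[Thm.~4]{BBFL}; everything else reduces to bookkeeping with the resulting Gram matrix.
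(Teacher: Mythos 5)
The intersection-number computations and the final lattice step (Gram determinant $-1$ for the six classes, plus unimodularity of $\Pic(\Xbar)$, forces them to be a basis) are correct and agree with the paper's. The genuine gap is in your divisibility step. You argue: interchanging $C_i\leftrightarrow C_i'$ changes $H+\sum_j C_j$ by $H$ modulo $2\Pic(\Xbar)$, and $H\not\equiv 0 \pmod{2\Pic(\Xbar)}$, ``therefore'' one interchange at most arranges $H+\sum_j C_j\in 2\Pic(\Xbar)$. This is a non sequitur: what you have shown is only that the two possible values of the class in $\Pic(\Xbar)/2\Pic(\Xbar)$ (for the two parities of the number of swaps) differ by the nonzero element $H$; it does not follow that one of them is zero. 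In the $\mathbb{F}_2$-vector space $\Pic(\Xbar)/2\Pic(\Xbar)\cong(\Z/2)^6$ a class $w$ and $w+H$ can perfectly well both be nonzero, and the intersection numbers you computed do not by themselves rule this out (they are invariant under swapping any single $C_i$ with $C_i'$, whereas the residue of $H+\sum_j C_j$ mod $2$ is not). So the existence of a choice of swaps making $\tfrac12(H+\sum_i C_i)$ an honest class in $\Pic(\Xbar)$ --- which is exactly the content of the phrase ``after possibly interchanging'' --- is not established by your argument, and without it the Gram-matrix computation has nothing to apply to.

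The paper closes precisely this point by invoking more of \cite{BBFL} than you do: Theorems~2 and~4 there give the classes explicitly in the standard basis of the blow-up, namely that after suitable interchanges $C_i\sim L-E_i$ and $C_i'\sim -K_X-C_i$, whence $\tfrac12(H+\sum_i C_i)\sim 4L-\sum_i E_i$ is visibly integral; the intersection numbers are then read off from these expressions. To repair your proof you would either cite this identification (at which point your route essentially becomes the paper's), or supply a genuine lattice-theoretic argument that the class of $H+\sum_i C_i$ in $\Pic(\Xbar)/2\Pic(\Xbar)$ lies in the two-element set $\{0,H\}$ --- for instance by showing it is orthogonal mod $2$ to a subspace whose orthogonal complement is $\{0,H\}$ --- which requires more than the observation that $H$ is nonzero mod $2$.
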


			\begin{proof}
				Recall that, by the definition of a del Pezzo surface, $\Xbar$ is $\kbar$-isomorphic to the blow-up of $\PP^2$ at five points $\{p_0,\dots,p_4\}$, no three of which are colinear. For $0 \leq i \leq 4$ write $E_i$ for the exceptional divisor corresponding to $p_i$ under the blow-up map, and let $L$ be the strict transform of a line in $\PP^2$ that does not pass though any of the points $p_i$.

				Since $X$ is anticanonically embedded, we have $H \sim -K_X\sim 3L - \sum E_i$.  By~\cite[Theorems~2 and~4]{BBFL}, after possibly interchanging $C_i$ and $C_i'$, we may assume that $C_i \sim L - E_i$ and $C_i' \sim -K_X - C_i$. Thus			\[
					\frac12(H + C_0 + C_1 + \cdots C_4)\sim
					\frac12(3L - \sum E_i + \sum(L - E_i)) \sim 4L - \sum E_i 
					\in \Pic \Xbar.
				\]

				Using the above expressions for $C_i$ and $C_i'$ in terms of $L$ and $E_i$, we compute that
				\[
					C_i^2 = (C_i')^2 = 0,\quad C_i\cdot H = C_i\cdot C_i' = 2,
					\textrm{ and }C_i\cdot C_j = C_i'\cdot C_j' = C_i \cdot C_j' = 1 
					\textrm{ for }i\neq j.
				\]
				These intersection numbers imply that the divisor classes~\eqref{eq: Picgens} span a rank $6$ sublattice of $\Pic(\Xbar)$.  Furthermore, this sublattice is unimodular, so it must be equal to all of $\Pic(\Xbar)$.
			\end{proof}

		\subsection{Galois action on $\Pic(\Xbar)$}\label{subsec:Galois}
		\label{sec:PicGalois}

			Let $\Gamma$ be a graph on ten vertices indexed by the conics $C_0,C_0',\dots,C_4,C_4'$ whose only edges join $C_i$ to $C_i'$ for $i = 0,\dots,4$. Let $c_i \in \Aut(\Gamma)$ be the graph automorphism that exchanges $C_i$ with $C_i'$ and leaves every other vertex fixed. The group $\Aut(\Gamma)$ is a semi-direct product
			\[
				(\Z/2\Z)^5 \rtimes S_5,
			\]
			where the $S_5$ factor permutes the set of pairs $\left\{\{C_i,C_i'\} : 0\leq i \leq 4\right\}$, and  $c_i$ generates one of the $\Z/2\Z$-factors. By the discussion in~\cite[pp.\ 8--10]{KST-dP4s}, there is a natural embedding of index $2$
			\[
				O(K_X^{\perp}) \hookrightarrow \Aut(\Gamma).
			\]
			An element in $\Aut(\Gamma)$ is in the image of this map if and only if it is a product of an even number of $c_i$'s with an element of $S_5$. 

			The group $G_k$ acts on $\Pic(\Xbar)$, and it preserves the intersection pairing, as well as $K_X$. The action therefore factors through the group $O(K_X^{\perp})$, which we consider in turn as a subgroup of $\Aut(\Gamma)$, as above. Projection onto the $S_5$-factor gives the usual action of $G_k$ on the set $\scrS(\kbar) = \{t_0,\dots, t_4\}$.

			For any $T \in \scrS$, let $\Gamma_T$ be the graph with $2\deg(T)$ vertices indexed by $C_i$ and $C_i'$ for $t_i\in T(\kbar)$, whose only edges join $C_i$ and $C_i'$ for $t_i\in T(\kbar)$. Note that $\Gamma_T$ is a subgraph of $\Gamma$, and that $\Gamma$ is the union of $\Gamma_T$ over all $T\in\scrS$.  There is a natural injective map
			\[
				\prod_{T\in \scrS} \Aut(\Gamma_T) \hookrightarrow \Aut(\Gamma),
			\]
			which we use to identify the domain with its image in $\Aut(\Gamma)$.

			\begin{proposition}\label{prop:GalAction}
				The action of the group $G_k$ on $\Pic(\Xbar)$ induces an action on $\Gamma$ that factors through 
				\[
					\left(\prod_{T \in \scrS} \Aut(\Gamma_T) \cap O(K_X^{\perp})\right)\subseteq \Aut(\Gamma).
				\]
				Moreover, $G_k$ acts transitively on the set $\{C_i,C_i': t_i \in T(\kbar)\}$ if and only if $\eps_T \notin \kappa(T)^{\times 2}$.
			\end{proposition}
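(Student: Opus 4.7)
The plan is to separate the two assertions—that the Galois action on $\Pic(\Xbar)$ factors through $\prod_T \Aut(\Gamma_T)\cap O(K_X^{\perp})$, and that transitivity on each $\Gamma_T$-vertex set is governed by the class of $\eps_T$—and reduce each to the explicit description of the conics $C_i, C_i'$ assembled in \S\ref{subsec:Generators}, together with Lemma~\ref{lem:Rank4NormalForm}.

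For the factorization, the containment in $O(K_X^{\perp})$ is automatic, since Galois preserves the intersection pairing on $\Pic(\Xbar)$ and fixes $K_X$. For the containment in $\prod_T \Aut(\Gamma_T)$, I would use that the construction of $C_i, C_i'$ is canonically attached to the geometric point $t_i$: a Galois element $\sigma$ with $\sigma(t_i) = t_j$ sends $V(Q_i)$ to $V(Q_j)$ and hence sends the two planes through a smooth point on the former to the two planes through the corresponding image on the latter, so $\sigma\{C_i, C_i'\} = \{C_j, C_j'\}$ in $\Pic(\Xbar)$. Since the $G_k$-orbits on $\{t_0,\dots,t_4\}$ are precisely the closed points $T \in \scrS$, the induced action on $\Gamma$ preserves each subgraph $\Gamma_T$.

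For the transitivity statement, I would fix $T \in \scrS$ and $t_i \in T(\kbar)$ and examine the $G_k$-orbit of $C_i$ inside the $2\deg(T)$-element vertex set of $\Gamma_T$. Because $T$ is a single Galois orbit, this orbit surjects onto $T(\kbar)$ under the map sending each vertex $C_j$ or $C_j'$ to $t_j$, so its size is $\deg(T)$ or $2\deg(T)$, and transitivity holds precisely when the stabilizer of $t_i$ swaps $C_i$ with $C_i'$. The stabilizer is $G_{\kappa(T)}$, and since $C_i, C_i'$ are defined over $\kappa(T)(\sqrt{\eps_T})$ the action factors through $\Gal(\kappa(T)(\sqrt{\eps_T})/\kappa(T))$, which is trivial exactly when $\eps_T$ is a square. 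When $\eps_T$ is a nonsquare, I would invoke Lemma~\ref{lem:Rank4NormalForm} over the auxiliary field $k_i$ to write the two planes as $V(\ell_1,\, \ell_3 \pm \sqrt{\eps_T}\,\ell_4)$—manifestly Galois-conjugate—and conclude that the nontrivial element swaps $C_i$ and $C_i'$.

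The delicate point is descending this final computation from $k_i$ back to $\kappa(T) = k(t_i)$. The defining hypothesis $[k_i(\sqrt{\eps_{t_i}}):k_i] = [k(t_i)(\sqrt{\eps_{t_i}}):k(t_i)]$ was imposed for exactly this reason: it guarantees that $\eps_T$ remains a nonsquare in $k_i$ whenever it is a nonsquare in $\kappa(T)$, so that $k_i(\sqrt{\eps_T})/k_i$ is the base change of $\kappa(T)(\sqrt{\eps_T})/\kappa(T)$ and a nontrivial action of the former lifts to a nontrivial action of the latter. Once this compatibility is recorded, the remainder is a routine orbit-stabilizer bookkeeping.
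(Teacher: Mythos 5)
Your proof is correct and takes essentially the same route as the paper's (which compresses it into two sentences): the factorization comes from Galois preserving the intersection form and $K_X$ and permuting each pair $\{C_i,C_i'\}$ along with $t_i$, and the transitivity criterion from the fact that the pair is defined over $k(t_i)$ while its constituents are defined over---and, when $\eps_{t_i}$ is a nonsquare, genuinely swapped by---$\Gal\bigl(k(t_i,\sqrt{\eps_{t_i}})/k(t_i)\bigr)$, with the degree condition on $k_i$ effecting exactly the descent you describe. The only blemish is cosmetic: in Lemma~\ref{lem:Rank4NormalForm} the two planes are $V\bigl(\ell_2,\ \ell_3\pm\sqrt{\eps_T}\,\ell_4\bigr)$, the tangent hyperplane being $V(\ell_2)$ rather than $V(\ell_1)$.
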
 

			\begin{proof}
				The first part of the proposition follows from the above discussion.  For the last claim, it suffices to note that for $t_i \in T(\kbar)$, the pair of divisor classes $\{C_i,C_i'\}$ is defined over $k(t_i)$, whilst the constituent classes of the pair are defined over $k\left(t_i,\sqrt{\eps_{t_i}}\right)$.
			\end{proof}

	\section{All Brauer elements are vertical}\label{sec:VerticalElements}

		We keep the notation from the previous section. Throughout, we assume that for all $T\in\scrS$, the quadric hypersurface $V(Q_T)$ has a smooth $\kappa(T)$-point.

		\subsection{The construction}\label{subsec:Construction}

			Assume that there exists a $k$-rational subscheme $\scrT\subseteq\scrS$ such that
		\begin{equation}\label{eq:Assumption}
			\deg(\scrT) = 2, \quad
			\prod_{T\in\scrT}\Norm_{\kappa(T)/k}(\eps_T) \in k^{\times2},\quad
			\textup{and }\eps_T\not\in\kappa(T)^{\times2}\textup{ for all }T\in\scrT.
			\tag{$\star$}
		\end{equation}

			\begin{lemma}\label{lem:DefnOfLScrT}
				If $\scrT$ satisfies~\eqref{eq:Assumption}, then $\eps_T$ lies in the image of $k^{\times}/k^{\times2}\to\kappa(T)^{\times}/\kappa(T)^{\times2}$ for all $T\in\scrT$.
			\end{lemma}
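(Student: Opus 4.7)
The plan is to reduce to the case where $\scrT$ is a single closed point with residue field a separable quadratic extension of $k$, and then to show that the norm-is-a-square hypothesis forces $\eps_T$ to descend to $k^\times/k^{\times 2}$ via Hilbert~90.

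Since $\scrT$ has degree $2$ and is $k$-rational, either $\scrT = \{T_1, T_2\}$ with $\kappa(T_1) = \kappa(T_2) = k$, in which case each $\eps_{T_i}$ already lies in $k^\times$ and the conclusion is immediate, or $\scrT = \{T\}$ is a single closed point with $[\kappa(T):k] = 2$. In the substantive second case, let $\sigma$ generate $\Gal(\kappa(T)/k)$ and put $\alpha := \eps_T$. The hypothesis $\Norm_{\kappa(T)/k}(\alpha) \in k^{\times 2}$ yields $c \in k^\times$ with $\alpha \sigma(\alpha) = c^2$; thus $\alpha/c$ has norm $1$, and Hilbert~90 for the cyclic extension $\kappa(T)/k$ furnishes $\gamma \in \kappa(T)^\times$ with $\alpha/c = \gamma/\sigma(\gamma)$. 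Rearranging gives
\[
  \alpha \cdot \sigma(\gamma)^2 \;=\; c\, \gamma\, \sigma(\gamma) \;=\; c \cdot \Norm_{\kappa(T)/k}(\gamma) \;\in\; k^\times,
\]
so the class of $\alpha$ in $\kappa(T)^\times/\kappa(T)^{\times 2}$ equals that of $c \Norm_{\kappa(T)/k}(\gamma) \in k^\times$, as required. Equivalently, I could exhibit the descent by hand: assuming $\alpha + c \neq 0$, take $\lambda := \alpha + c$; a one-line computation using $\sigma(\alpha) = c^2/\alpha$ gives $\sigma(\lambda)/\lambda = c/\alpha$, and consequently $\sigma(\alpha/\lambda^2) = \alpha/\lambda^2$, so $\alpha/\lambda^2 \in k^\times$. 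The degenerate subcase $\alpha = -c$ forces $\alpha \in k^\times$ outright.

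There is no serious obstacle. The observation to flag at the start is that the norm-is-a-square condition is equivalent to $[\alpha]$ being Galois-invariant in $\kappa(T)^\times/\kappa(T)^{\times 2}$, since $\alpha \sigma(\alpha) = c^2$ forces $\sigma(\alpha) \equiv \alpha^{-1} \equiv \alpha \pmod{\kappa(T)^{\times 2}}$; from there Hilbert~90 mechanically produces the descent. The third part of hypothesis~\eqref{eq:Assumption}, namely $\eps_T \notin \kappa(T)^{\times 2}$, plays no role in proving this lemma---it only serves to guarantee that the descended class is itself nontrivial in $k^\times/k^{\times 2}$, which presumably matters for the construction of Brauer classes in \S\ref{subsec:Construction}.
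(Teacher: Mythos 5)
Your proof is correct, and it follows a genuinely different route from the paper's. The paper treats the degree~$2$ point by passing to the geometric points $t,t'$ of $T$, showing that $K := k(t,\sqrt{\eps_t})$ is a biquadratic extension of $k$ with group $(\Z/2\Z)^2$ (this is where it invokes $\eps_T\notin\kappa(T)^{\times2}$ from~\eqref{eq:Assumption}), and then reading off the descent from the structure of that extension. You instead stay inside $\kappa(T)$ and split the norm relation directly: from $\eps_T\,\sigma(\eps_T)=c^2$ you apply Hilbert~90 (or, equivalently, the explicit element $\lambda=\eps_T+c$) to get $\eps_T\,\sigma(\gamma)^2=c\,\Norm_{\kappa(T)/k}(\gamma)\in k^{\times}$. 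This is shorter, avoids any analysis of the field $k(t,\sqrt{\eps_t})$, and, as you observe, makes no use of the hypothesis $\eps_T\notin\kappa(T)^{\times2}$, so it proves a slightly stronger statement; the paper needs that hypothesis for its argument to run (to get $[K:k(t)]=2$ and to discard the alternative $\eps_t\in k(t)^{\times2}$, though that alternative would also yield the conclusion trivially). Your case split (two $k$-points versus a single quadratic point) matches the paper's and is justified: $\scrS$ is reduced, and since $\Char k\neq 2$ the extension $\kappa(T)/k$ is separable, hence Galois, so Hilbert~90 applies.

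One small inaccuracy in your closing aside: the norm condition is \emph{not} equivalent to Galois-invariance of the class $[\eps_T]$ in $\kappa(T)^{\times}/\kappa(T)^{\times2}$; only the implication you actually verify (norm a square $\Rightarrow$ class invariant) holds. For example, over $\Q(\sqrt{2})/\Q$ the element $\alpha=2+\sqrt{2}$ satisfies $\sigma(\alpha)/\alpha=(\sqrt{2}-1)^2$, so its square class is Galois-invariant, yet $\Norm_{\Q(\sqrt{2})/\Q}(\alpha)=2$ is not a square in $\Q^{\times}$ (and correspondingly $[\alpha]$ does not descend to $\Q^{\times}/\Q^{\times2}$). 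Since your argument uses only the correct direction, this does not affect the validity of the proof.
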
	
			\begin{proof}
				If $\scrT$ contains a degree $1$ point, then the claim is immediate as $\kappa(T) = k$ for all $T\in\scrT$.  Assume that $T\in\scrT$ is a degree $2$ point; write $T(\kbar) = \{t, t'\}.$  By the hypothesis, $\eps_t\eps_{t'}\in k^{\times2}$, and so $\sqrt{\eps_{t'}} = c/\sqrt{\eps_t}$ for some $c\in k^{\times}.$  

				Let $K := k(t,\sqrt{\eps_t})$; this is a degree $4$ Galois extension of $k$ and any element of $\Gal(K/k)$ sends $\sqrt{\eps_t}$ to an element in $\{\sqrt{\eps_{t}}, -\sqrt{\eps_t}, c/\sqrt{\eps_{t}}, -c/\sqrt{\eps_{t}}\}$.  Using this, one can check that there is no order $4$ automorphism of $K/k$.  Thus $\Gal(K/k) = (\Z/2\Z)^2$ and $K$ is a biquadratic extension of $k$ that contains $k(t)$.  A calculation then shows that $\eps_t\in \{a\alpha^2, \alpha^2 : a\in k^{\times}, \alpha\in k(t)^{\times}\}$.  By~\eqref{eq:Assumption}, $\eps_t\not\in k(t)^{\times2}$, which proves the lemma.
			\end{proof}

			If $\scrT$ satisfies~\eqref{eq:Assumption}, then the lemma allows us to assume that $\eps_T \in k^\times$ for all $T \in \scrT$. Define $L_\scrT = k(\sqrt{\eps_T})$ for some $T \in \scrT$; this extension is a quadratic extension, independent of the choice of $T$.  

			By assumption, for all $T\in\scrT$, $V(Q_T)$ has a smooth $\kappa(T)$-point.  Let $\ell_T$ be a $\kappa(T)$-linear form such that the associated hyperplane is tangent to $V(Q_T)$ at a smooth point.

			\begin{lemma}\label{lem:DefnOfCalAScrT}
				Assume that $\scrT$ satisfies~\eqref{eq:Assumption}, and let $\ell$ be any $k$-linear form. Then the cyclic algebra 
				\[
					\calA_{\scrT}:=\left(L_{\scrT}/k, 
					\ell^{-2}\prod_{T\in\scrT}\Norm_{\kappa(T)/k}(\ell_T)\right)
				\]
				is unramified and thus is in the image of $\Br X\to\Br\kk(X)$.
			\end{lemma}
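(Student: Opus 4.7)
The plan is to check that $\calA_\scrT$ has trivial residue at every $k$-prime divisor $D$ of $X$, via the purity exact sequence
\[
    0 \to \Br X \to \Br \kk(X) \xrightarrow{\oplus \partial_D} \bigoplus_{D} H^1(\kappa(D), \Q/\Z).
\]
Setting $g := \ell^{-2}\prod_{T\in\scrT}\Norm_{\kappa(T)/k}(\ell_T)$, the residue of the cyclic algebra $(L_\scrT/k, g)$ at $D$ equals $v_D(g) \cdot \chi|_{G_{\kappa(D)}}$, where $\chi \in H^1(k, \Z/2\Z)$ is the character cutting out $L_\scrT/k$. Hence this residue vanishes provided either $v_D(g)$ is even, or $L_\scrT \subseteq \kappa(D)$ (equivalently, $\eps_T \in \kappa(D)^{\times 2}$).

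The main computation is that of $\divv_X(g)$. Choosing $\ell$ generically, $\divv_X(\ell)$ is a reduced hyperplane section $H$, disjoint from the other divisors that will appear. For each $T \in \scrT$, Lemma~\ref{lem:Rank4NormalForm} gives $\{\ell_T = 0\}\cap V(Q_T) = L_{P_T} \cup L'_{P_T}$, the union of two planes defined over $F_T := \kappa(T)(\sqrt{\eps_T})$. Intersecting with $X \subset V(Q_T)$ yields $\{\ell_T=0\}\cap X = C_T + C_T'$ on $X_{F_T}$, each conic appearing with multiplicity one (the total degree $4$ matching a hyperplane section). Summing over embeddings $\kappa(T) \hookrightarrow \kbar$ then produces $\divv_X(\Norm_{\kappa(T)/k}(\ell_T)) = \sum_{\sigma}(\sigma(C_T) + \sigma(C_T'))$. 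Proposition~\ref{prop:GalAction}, combined with $\eps_T \notin \kappa(T)^{\times 2}$, implies that $G_k$ acts transitively on $\{C_i, C_i' : t_i \in T(\kbar)\}$, so this sum is precisely a single $k$-prime divisor $D_T$ with multiplicity one. Consequently $v_H(g) = -2$, $v_{D_T}(g) = 1$ for each $T \in \scrT$, and $v_D(g) = 0$ at every other prime divisor of $X$.

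It remains to verify that the residue vanishes at each divisor in the support. At $H$, and at any $D$ outside the listed support, the valuation $v_D(g)$ is even. At $D_T$, the function field $\kappa(D_T)$ is the function field of $C_T$ viewed as an $F_T$-variety, and therefore contains $F_T$; by Lemma~\ref{lem:DefnOfLScrT}, $L_\scrT = k(\sqrt{\eps_T}) \subseteq F_T$, so $L_\scrT \subseteq \kappa(D_T)$ and the residue at $D_T$ vanishes even though $v_{D_T}(g) = 1$ is odd. The main subtlety is correctly identifying the residue field $\kappa(D_T)$ as the function field of the $F_T$-conic $C_T$; once this is in hand, purity yields $\calA_\scrT \in \im(\Br X \to \Br \kk(X))$.
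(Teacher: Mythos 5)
Your argument is correct and is essentially the paper's own proof: the paper also invokes purity and reduces to showing $L_{\scrT}\subseteq \kappa(C_T\cup C_T')$ for $T\in\scrT$, which holds because the conics $C_T$, $C_T'$ are conjugate over $L_\scrT$ (so the residue field of the unique odd-valuation prime divisor contains $L_\scrT$). Your write-up simply spells out the divisor computation and the identification of $\kappa(D_T)$ in more detail than the paper does.
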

			\begin{proof}
				By the purity theorem~\cite[\S\S6,7]{Grothendieck-BrauerIII}, the class of a cyclic algebra $(L/k, f)$ is in the image of $\Br X \to \Br \kk(X)$ if and only if for every prime divisor $V\subseteq X$ we have $L\subseteq\kappa(V)$ whenever $\ord_V(f)\equiv 1\bmod 2$.  Therefore, we must show that $L_{\scrT} \subseteq \kappa(C_T\cup C'_T)$ for all $T\in\scrT$. This containment holds because $C_T$ and $C_T'$ are conjugate over $L_\scrT$.
			\end{proof}

		\subsection{Nontriviality of $\calA_{\scrT}$}
		\label{subsec:Nontriviality}

			We would like to determine when $\calA_{\scrT}$ is nontrivial, i.e., when $\calA_{\scrT} \notin \Br_0 X$.  Let $L$ be a cyclic extension of $k$, and let $f \in \kk(X)^\times$. Define
			\[
				\Br_{\textup{cyc}}(X,L) = \left\{
				\begin{array}{c}
					\textup{classes $[(L/k,f)]$ in the image of the } \\
					\textup{map }\Br X/\Br_0 X \to \Br\kk(X)/\Br_0 X
				\end{array}
				\right\}
			\]
			Fix a generator $\sigma$ of $\Gal(L_{\scrT}/k)$. We view $\Norm_{L_\scrT/k}$ and $1 - \sigma$ as endomorphisms of $\Pic(X_L)$, and we consider the image of $\calA_{\scrT}$ under the composition of the following maps
			\begin{equation}\label{eq:Maps}
				\Br_{\textup{cyc}}(X, L_\scrT) \stackrel{\sim}{\longrightarrow}
				\frac{\ker \Norm_{L_\scrT/k}}{\im (1 - \sigma)} \hookrightarrow
				\HH^1(G_k, \Pic(\Xbar))[2] \stackrel{\sim}{\longrightarrow}
				\frac{\left(\Pic\Xbar/2\Pic\Xbar\right)^{G_k}}
				{\left((\Pic\Xbar)^{G_k}/2(\Pic\Xbar)^{G_k}\right)}.
			\end{equation}
			The first two maps are described in~\cite[Thm 3.3]{VA-WeakApprox}, and the last map is obtained by considering the long exact sequence in cohomology associated to the short exact sequence $0 \to \Pic\Xbar \stackrel{\times2}{\to}\Pic\Xbar \to \left(\Pic\Xbar/2\Pic\Xbar\right) \to 0$. (In~\cite[\S3]{VA-WeakApprox} $k$ is assumed to be a number field; however, this hypothesis is not necessary for the result we cite.)

			By~\cite[Thm 3.3]{VA-WeakApprox}, a degree $2$ cyclic algebra $(L/k, f)$ maps to a cocycle in $\HH^1(G_k, \Pic\Xbar)$ that sends $G_{L}$ to the identity, and that sends any element outside of $G_{L}$ to a divisor $D$ such that $\divv(f) = \sum_{\sigma \in \Gal({L/k})} \sigma(D)$. Therefore, the cyclic algebra $\calA_{\scrT}$ maps to the cocycle
			\begin{equation}\label{eq:Cocycle}
				\sigma \mapsto 
				\begin{cases}
					-H + \sum_{t_i\in\scrT(\kbar)} C_i & \textup{if }\sigma\not\in G_{L_\scrT},\\
					0 & \textup{otherwise},
				\end{cases}
			\end{equation}
			in $\HH^1(G_k, \Pic \Xbar).$  Since $\sum_{t_i\in\scrT(\kbar)} C_i$ is fixed by every element of $G_{L_\scrT}$ and 
			\[
				\frac12\left(\sum_{t_i\in\scrT(\kbar)} C_i - 
				\sigma\bigg(\sum_{t_i\in\scrT(\kbar)} C_i\bigg)\right) = 
				\frac12\left(\sum_{t_i\in\scrT(\kbar)} C_i
				- \sum_{t_i\in\scrT(\kbar)} C'_i\right)
				= -H + \sum_{t_i\in\scrT(\kbar)} C_i,
			\]
			for all $\sigma \in G_k\setminus G_{L_\scrT}$, the cocycle~\eqref{eq:Cocycle} maps to $\sum_{t_i\in\scrT(\kbar)} C_i$ under the last map in~\eqref{eq:Maps}.  Thus $\calA_{\scrT}$ is trivial if and only if $\sum_{t_i\in\scrT(\kbar)} C_i \in 2\Pic\Xbar + (\Pic \Xbar)^{G_k}$.

			\begin{proposition}\label{prop:Nontriviality}
				If $\scrT$ is a $k$-subscheme of $\scrS$ that satisfies~\eqref{eq:Assumption}, then $\calA_{\scrT} \not\in \Br_0(X)$ if and only if there exists $T\in\scrS\setminus\scrT$ such that $\varepsilon_{T}\not\in\kappa(T)^{\times2}$.
			\end{proposition}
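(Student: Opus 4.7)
The plan is to analyze $[\calA_\scrT] \in \HH^1(G_k, \Pic\Xbar)$ via Shapiro's lemma. By the discussion before the statement, triviality of $\calA_\scrT$ amounts to $\chi_\scrT := \sum_{t_i \in \scrT(\kbar)}C_i$ belonging to $2\Pic\Xbar + (\Pic\Xbar)^{G_k}$. Using $C_i \sim L - E_i$ and $H \sim 3L - \sum_j E_j$ in the blow-up presentation of $\Xbar$, one computes $\sum_{i=0}^{4} C_i \equiv H \pmod{2\Pic\Xbar}$, so this is equivalent to $\chi_{\scrS'} := \sum_{t_i \in \scrS'(\kbar)}C_i \in 2\Pic\Xbar + (\Pic\Xbar)^{G_k}$ where $\scrS' := \scrS \setminus \scrT$. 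The ``$\Leftarrow$'' direction is immediate from Proposition~\ref{prop:GalAction}: if every $T \in \scrS'$ has $\eps_T \in \kappa(T)^{\times 2}$, Galois permutes each $\{C_i : t_i \in T(\kbar)\}$ without pair-swaps, so each $\chi_T$ and thus $\chi_{\scrS'}$ is $G_k$-invariant.

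For the converse, I would pass to $N := \Pic\Xbar/\Z H$, using the injection $\HH^1(G_k, \Pic\Xbar) \hookrightarrow \HH^1(G_k, N)$ that follows from $\HH^1(G_k, \Z) = 0$. Inside $N$ sits the index-$2$ sublattice $\tilde N := \bigoplus_{T \in \scrS}N_T$ with $N_T := \bigoplus_{t_i \in T(\kbar)}\Z\bar C_i \cong \Ind_{G_{\kappa(T)}}^{G_k}\Z_{\eps_T}$, where $\Z_{\eps_T}$ denotes $\Z$ with $G_{\kappa(T)}$-action by $\pm 1$ through $\Gal(\kappa(T)(\sqrt{\eps_T})/\kappa(T))$. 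The quotient $N/\tilde N \cong \Z/2\Z$ is generated by $\bar D_0$ via the relation $2\bar D_0 = \sum_i \bar C_i$ and carries the trivial $G_k$-action (a short check using that $\sigma(D_0) - D_0$ is an integer combination of the $C_i$ and $H$). The associated long exact sequence identifies $\ker(\HH^1(G_k, \tilde N) \to \HH^1(G_k, N))$ with the cyclic subgroup generated by the coboundary $\delta(1)$ of $1 \in (N/\tilde N)^{G_k}$, and Shapiro's lemma identifies $\HH^1(G_k, \tilde N) = \bigoplus_{T : \eps_T \notin \kappa(T)^{\times 2}}\Z/2\Z$.

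The key computations, carried out via the Shapiro recipe of restricting cocycles to $G_{\kappa(T)}$ and evaluating at a basepoint $t_0 \in T(\kbar)$, are: (i) the reduction $\bar\phi \in \HH^1(G_k, \tilde N)$ of the cocycle~\eqref{eq:Cocycle} has the nontrivial generator of $\HH^1(G_{\kappa(T)}, \Z_{\eps_T})$ in the $T$-slot for each $T \in \scrT$ and $0$ for $T \in \scrS'$; the nontriviality for $T \in \scrT$ uses the identity $\kappa(T)(\sqrt{\eps_T}) = \kappa(T)\cdot L_\scrT$ coming from Lemma~\ref{lem:DefnOfLScrT} and~\eqref{eq:Assumption}, which ensures the restricted cocycle coincides with the generator. (ii) Lifting $1 \in N/\tilde N$ to $\bar D_0$, one obtains $\sigma(\bar D_0) - \bar D_0 = -\sum_{j \in \sigma(I_\sigma)}\bar C_j$ in $\tilde N$, where $I_\sigma := \{i \in \scrS(\kbar) : \sigma(C_i) = C'_{\sigma(i)}\}$; the same Shapiro recipe then shows $\delta(1)$ has the generator in \emph{every} $T$-slot with $\eps_T \notin \kappa(T)^{\times 2}$. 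The parity observation $|I_\sigma| \equiv 0 \pmod 2$ from Proposition~\ref{prop:GalAction} ensures the displayed formula is integral.

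Combining, $[\calA_\scrT] = 0$ if and only if $[\bar\phi] \in \{0, [\delta(1)]\}$. Condition~\eqref{eq:Assumption} forces $[\bar\phi] \neq 0$ (since $\scrT$ is nonempty and every $T \in \scrT$ has $\eps_T \notin \kappa(T)^{\times 2}$), so the triviality of $\calA_\scrT$ is equivalent to $[\bar\phi] = [\delta(1)]$, which by (i) and (ii) amounts to the equality of sets $\scrT = \{T \in \scrS : \eps_T \notin \kappa(T)^{\times 2}\}$; combined with~\eqref{eq:Assumption}, this is the condition that every $T \in \scrS \setminus \scrT$ has $\eps_T \in \kappa(T)^{\times 2}$. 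The contrapositive is the stated result. The main obstacle is the Shapiro bookkeeping in~(ii), requiring a careful derivation of the formula for $\sigma(\bar D_0) - \bar D_0$ from the half-integer relation $2D_0 = H + \sum C_i$ together with the explicit description of the Galois action in Proposition~\ref{prop:GalAction}.
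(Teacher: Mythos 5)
Your argument is correct and reaches the statement by a genuinely different route than the paper. The paper works directly with the criterion $\sum_{t_i\in\scrT(\kbar)}C_i\in 2\Pic\Xbar+(\Pic\Xbar)^{G_k}$, using the explicit basis of Proposition~\ref{prop:PicGens} and a hands-on sign/orbit analysis based on Proposition~\ref{prop:GalAction}. You instead push the cocycle~\eqref{eq:Cocycle} into $N=\Pic\Xbar/\Z H$ (legitimate since $\HH^1(G_k,\Z)=0$), decompose the index-$2$ sublattice $\widetilde{N}$ into the induced rank-one modules $\Ind_{G_{\kappa(T)}}^{G_k}\Z_{\eps_T}$ (correct, because $C_i'\equiv -C_i$ in $N$), and reduce everything to Shapiro's lemma plus the computation of the coboundary class $\delta(1)$, whose support is exactly $\{T:\eps_T\notin\kappa(T)^{\times2}\}$ while the support of $\bar\phi$ is $\scrT$ (here the identity $\kappa(T)\cdot L_\scrT=\kappa(T)(\sqrt{\eps_T})$ from Lemma~\ref{lem:DefnOfLScrT} and~\eqref{eq:Assumption} is indeed the key input). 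This is more structural: it essentially computes the relevant part of $\HH^1(G_k,\Pic\Xbar)$ as a subquotient of $\bigoplus_T\Z/2$, in the spirit of Wittenberg's formula invoked in the paper's alternative (number-field) proof of Theorem~\ref{thm:MainExplicit}, and it packages both implications in one calculation; the paper's proof is more elementary and stays entirely inside the lattice $\Pic\Xbar$.

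One step in your first paragraph is not justified and is in fact false in general: when $\eps_T\in\kappa(T)^{\times2}$, Proposition~\ref{prop:GalAction} only says the action on $\{C_i,C_i':t_i\in T(\kbar)\}$ is not transitive; it does not say that the labelled set $\{C_i:t_i\in T(\kbar)\}$ is Galois-stable. For a point $T$ of degree $2$ with $T(\kbar)=\{t_1,t_2\}$, an element of $G_k$ may act by $C_1\mapsto C_2'$, $C_2\mapsto C_1'$ (this lies in $\Aut(\Gamma_T)\cap O(K_X^\perp)$ and is non-transitive, hence compatible with $\eps_T$ being a square), so $\chi_T=C_1+C_2$ need not be $G_k$-invariant; the paper repairs exactly this with its choice of signs, using $C_i'=H-C_i$ so that, e.g., $C_1-C_2$ is invariant and $\chi_T$ still lies in $2\Pic\Xbar+(\Pic\Xbar)^{G_k}$. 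In your write-up this flaw is harmless because the ``$\Leftarrow$'' shortcut is redundant: the equivalence $[\bar\phi]=[\delta(1)]$ established in your last two paragraphs already gives both directions. Either delete that paragraph or replace the invariance claim by the sign-adjusted statement.
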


			\begin{proof}
				As noted above, $\calA_{\scrT}$ is trivial if and only if $\sum_{t_i\in\scrT(\kbar)} C_i \in 2\Pic\Xbar + (\Pic\Xbar)^{G_k}$.  By Proposition~\ref{prop:PicGens}  the Picard group is freely generated by
				\[
					\frac12\left(H + C_0 + C_1 + C_2 + C_3 + C_4\right), \; C_0, \; C_1,\;
					C_2,\; C_3, \;\textup{and } C_4.
				\]
				Since, by~\eqref{eq:Assumption}, $\eps_T \notin k(T)^{\times 2}$ for all $T \in \scrT$, Proposition~\ref{prop:GalAction} implies that $\sum_{t_i\in\scrT(\kbar)} C_i$ is not fixed by Galois. Therefore, $\sum_{t_i\in\scrT(\kbar)} C_i \in 2\Pic\Xbar + (\Pic\Xbar)^{G_k}$ if and only if there is some choice of sign such that
				\[
					\sum_{t_i\in(\scrS\setminus\scrT)(\kbar)} \pm C_{i} 
					\in (\Pic \Xbar)^{G_k}.
				\]

				Assume that there exists $T\in\scrS\setminus\scrT$ with $\varepsilon_{T}\not\in\kappa(T)^{\times2}$.  Then, by Proposition~\ref{prop:GalAction}, the absolute Galois group acts transitively on $\{C_i, C'_i : t_i\in T(\kbar)\}$.  Thus, for some $t_j\in(\scrS\setminus\scrT)(\kbar)$, there exists an element $\sigma\in G_k$ such that 
				\[
					\sigma\left(\pm C_{j} + \sum_{t_i\in(\scrS\setminus\scrT)(\kbar), i\ne j}\pm C_i\right) = 
					\mp C_{j} + D,
				\]
				where $D$ is a divisor that does not involve $C_{j}$.  Therefore, for any choice of signs,
				\[
					\sum_{t_i\in(\scrS\setminus\scrT)(\kbar)} \pm C_{i} 
					\notin (\Pic \Xbar)^{\sigma} \supseteq (\Pic \Xbar)^{G_k},
				\]
				and so $\calA_{\scrT}$ is nontrivial in $\Br X/\Br_0 X$.

				Now assume that for all $T\in \scrS\setminus\scrT$, $\eps_T\in\kappa(T)^{\times2}$.  By Proposition~\ref{prop:GalAction}, $G_k$ does not act transitively on $\{C_i, C_i' : t_i\in T(\kbar)\}$.  Since $G_k$ does act transitively on $\{t_i : t_i\in T(\kbar)\}$, the set $\{C_i, C_i' : t_i\in T(\kbar)\}$ breaks up into two Galois invariant sets of equal size such that for all $t_i\in T(\kbar)$, $C_i$ and $C_i'$ are not contained in the same set.  

				Without loss of generality, we may assume that $(\scrS\setminus\scrT)(\kbar) = \{t_0,t_1,t_2\}$.  The previous discussion shows that, possibly after renumbering, at least one of
				\[
					\{C_{0}, C_{1}, C_{2}\} \quad \textup{or}\quad
					\{C_{0}, C_{1}, C'_{2}\}
				\]
				are Galois invariant.  If the first set is Galois invariant, then $C_{0} + C_{1} + C_{2}\in (\Pic \Xbar)^{G_k}$ and if the latter is Galois invariant, then $C_{0} + C_{1} - C_{2} \in(\Pic\Xbar)^{G_k}.$  In either case, this shows that $\calA_{\scrT}$ is in $\Br_0 X$.
			\end{proof}

		\subsection{Main result}\label{subsec:MainResult}

			\begin{thm}\label{thm:MainExplicit}
				If $\Br X \neq \Br_0 X$ then there exists $\scrT \subseteq \scrS$ satisfying~\eqref{eq:Assumption} and $T \in \scrS\setminus\scrT$ such that $\eps_T \notin \kappa(T)^{\times 2}$. Furthermore, if $\#(\Br X/\Br_0 X)= 4$ then there exist $T_0, T_1, T_2\in\scrS(k)$ such that every pair satisfies~\eqref{eq:Assumption}.
				
				{If, for all $T\in\scrS$ of degree at most $2$, the quadric hypersurface $V(Q_T)$ has a smooth $\kappa(T)$-point, then the converses of the above two statements hold. Moreover, }any nontrivial element in $\Br X/\Br_0 X$ is of the form $\calA_{\scrT}$ for some $\scrT\subseteq\scrS$, and if $\#(\Br X/\Br_0 X)= 4$, then $\Br X/\Br_0 X = \{\id, \calA_{\{T_0, T_1\}}, \calA_{\{T_0, T_2\}}, \calA_{\{T_1, T_2\}}\}$.
			\end{thm}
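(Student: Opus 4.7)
The plan is to leverage the injection from~\eqref{eq:Maps}, namely $\Br X/\Br_0 X \hookrightarrow (\Pic\Xbar/2\Pic\Xbar)^{G_k}/((\Pic\Xbar)^{G_k}/2(\Pic\Xbar)^{G_k})$, and classify its image using the basis $\{\frac12(H+\sum_i C_i), C_0, \dots, C_4\}$ from Proposition~\ref{prop:PicGens} together with the Galois description of Proposition~\ref{prop:GalAction}. Given any non-trivial class with representative $a\cdot\frac12(H+\sum_i C_i) + \sum_i b_iC_i \pmod{2}$, I would show that, modulo $2\Pic\Xbar + (\Pic\Xbar)^{G_k}$, it may be written as $v_\scrT := \sum_{t_i \in \scrT(\kbar)} C_i$ for some $k$-subscheme $\scrT \subseteq \scrS$. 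The invariance of $v_\scrT$ under $G_k$, coupled with the constraint that the image of $G_k$ in $\Aut(\Gamma)$ lies in $O(K_X^\perp)$ (an even number of conic-swaps times a permutation), forces two conditions: $\eps_T \notin \kappa(T)^{\times 2}$ for each $T \in \scrT$ (otherwise, by Proposition~\ref{prop:GalAction}, Galois does not interchange $C_i \leftrightarrow C_i'$ for any $t_i \in T(\kbar)$, so the partial sum over $T$ already lies in $(\Pic\Xbar)^{G_k}/2$), and $\prod_{T\in\scrT}\Norm_{\kappa(T)/k}(\eps_T) \in k^{\times 2}$ (reflecting the evenness of swaps across $\scrT$ imposed by $O(K_X^\perp)$). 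The remaining degree constraint then forces $\deg\scrT = 2$, so $\scrT$ satisfies~\eqref{eq:Assumption}.

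With $\scrT$ in hand, the class $v_\scrT$ is exactly the image of $\calA_\scrT$ under~\eqref{eq:Maps}, by the cocycle computation preceding Proposition~\ref{prop:Nontriviality}; hence the starting class equals $\calA_\scrT$ modulo $\Br_0 X$, and its non-triviality combined with Proposition~\ref{prop:Nontriviality} produces the required $T' \in \scrS\setminus\scrT$ with $\eps_{T'} \notin \kappa(T')^{\times 2}$. This simultaneously proves the first statement of the theorem and the ``any non-trivial element is of the form $\calA_\scrT$'' claim.

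For the order-$4$ statement I would analyze the three non-trivial classes simultaneously. Each yields a degree-$2$ subscheme $\scrT_i$ by the above; using the classification and the fact that the three classes span a $(\Z/2)^2$, a short combinatorial argument inside the $5$-point set $\scrS$ shows that the only compatible configuration is $\scrT_i = \{T_a, T_b\}$ for $T_a, T_b \in \scrS(k)$, with the $\scrT_i$'s being the three pairs drawn from a common $\{T_0, T_1, T_2\} \subseteq \scrS(k)$ where each $\eps_{T_i} \notin k^{\times 2}$ and pairwise products of $\eps$ lie in $k^{\times 2}$. The converses in both statements follow directly from Lemma~\ref{lem:DefnOfCalAScrT} (which produces $\calA_\scrT \in \Br X$, requiring the smooth-point hypothesis on rank-$4$ quadrics over closed points of degree at most $2$) together with Proposition~\ref{prop:Nontriviality} (certifying non-triviality).

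The main obstacle is the combinatorial classification in the first paragraph: carefully tracking how the parity condition defining $O(K_X^\perp) \hookrightarrow \Aut(\Gamma)$ restricts Galois-invariant $\F_2$-combinations of the basis elements modulo $(\Pic\Xbar)^{G_k}$, and reading off from this restriction exactly the conditions appearing in~\eqref{eq:Assumption}. The parity condition is what produces the square-norm identity on $\prod_{T\in\scrT}\Norm_{\kappa(T)/k}(\eps_T)$, while the non-squareness of each $\eps_T$ is what guarantees the class is nonzero in the quotient by $(\Pic\Xbar)^{G_k}/2$.
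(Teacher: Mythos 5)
Your outline follows the same route as the paper: inject $\Br X/\Br_0 X$ into $\HH^1(G_k,\Pic\Xbar)$, analyze that group using the generators of Proposition~\ref{prop:PicGens} and the Galois description of Proposition~\ref{prop:GalAction}, and feed the outcome into Proposition~\ref{prop:Nontriviality}; what you sketch in your first paragraph is exactly the ``by hand'' case analysis that the paper delegates to a \texttt{Magma} script. However, there is a genuine gap at the very first step. The Hochschild--Serre injection lands in $\HH^1(G_k,\Pic\Xbar)$, not in $\HH^1(G_k,\Pic\Xbar)[2]$, and the isomorphism at the end of~\eqref{eq:Maps} identifies only the $2$-torsion with the quotient $\left(\Pic\Xbar/2\Pic\Xbar\right)^{G_k}/\left((\Pic\Xbar)^{G_k}/2(\Pic\Xbar)^{G_k}\right)$. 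By starting from an ``injection'' of all of $\Br X/\Br_0 X$ into that quotient, you are silently assuming that $\Br X/\Br_0 X$ (equivalently, its image in $\HH^1$) is killed by $2$. Over an arbitrary field of characteristic not $2$ this is not known in advance; it is precisely part of what the paper's case-by-case computation establishes, in the form $\#\HH^1(G_k,\Pic\Xbar)\mid 4$ together with the structure of the group when the order is $4$. Your classification of invariant mod-$2$ classes, even if carried out in full, only controls $\HH^1[2]$: it does not exclude elements of order $4$ or of odd order, so as written it does not prove that every nontrivial class is some $\calA_\scrT$, nor that the three classes in the order-$4$ case ``span a $(\Z/2)^2$'' (you cite this as a fact, but it needs proof at this level of generality), nor the converse of the second statement, where you need the upper bound $\#(\Br X/\Br_0 X)\le 4$ to get equality rather than just $\ge 4$ from the three nontrivial algebras. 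To close the gap you must supplement your mod-$2$ analysis with a bound on the full group $\HH^1(G_k,\Pic\Xbar)$ -- e.g.\ by running the case analysis over the possible images of $G_k$ in $\prod_T\Aut(\Gamma_T)\cap O(K_X^\perp)$ as the paper does, or (over number fields) by invoking Wittenberg's formula as in the paper's second proof.

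Two smaller points. First, for the forward statement you extract the point $T'$ from the nontriviality of $\calA_\scrT$ via Proposition~\ref{prop:Nontriviality}, which presupposes that $\calA_\scrT$ exists (smooth $\kappa(T)$-points on $V(Q_T)$ for $T\in\scrT$); under the standing hypothesis of \S\ref{sec:VerticalElements} this is harmless, but it is cleaner to quote the purely lattice-theoretic step inside the proof of Proposition~\ref{prop:Nontriviality} (whether $\sum_{t_i\in\scrT(\kbar)}C_i$ lies in $2\Pic\Xbar+(\Pic\Xbar)^{G_k}$), which needs no algebra at all. Second, the condition $\prod_{T\in\scrT}\Norm_{\kappa(T)/k}(\eps_T)\in k^{\times2}$ comes from requiring $\sum_{t_i\in\scrT(\kbar)}C_i$ to be Galois-invariant modulo $2$ (an odd number of swaps inside $\scrT$ changes the class by $H$), not from the evenness constraint defining $O(K_X^{\perp})$, which concerns swaps across all of $\scrS$; the latter is what replaces~\eqref{eq:product} over general fields. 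These are fixable misattributions, but the $2$-torsion/order issue above is the substantive missing step.
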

			
			\begin{proof}
				From the Hochschild-Serre spectral sequence, we have an injection
				\[
					\frac{\Br X}{\Br_0 X} \hookrightarrow \HH^1(G_k, \Pic \Xbar);
				\]
				\emph{a priori}, this map need not be surjective for an arbitrary field $k$ of characteristic not $2$. Using Proposition~\ref{prop:GalAction} and a case by case analysis, one can compute that $\#\HH^1(G_k, \Pic \Xbar) \mid 4$, and that if $\#\HH^1(G_k, \Pic \Xbar) = 2$, then there exists $\scrT \subset \scrS$ satisfying~\eqref{eq:Assumption}, and a $T'\in \scrS\setminus \scrT$ such that $\eps_{T'}\notin\kappa(T')^{\times2}$.  By assumption, $V(Q_T)$ has a smooth $\kappa(T)$-point for all $T\in\scrT$, so $\calA_\scrT$ exists, and by Proposition~\ref{prop:Nontriviality} it is a nontrivial element of $\Br X/\Br_0 X$. Since $\#(\Br X/\Br_0 X)\leq 2$, $\calA_\scrT$ must be the unique nontrivial element.

				Furthermore, we compute that if $\#\HH^1(G_k, \Pic \Xbar) = 4$, then there exist $T_0, T_1$, and $T_2$ in $\scrS(k)$ such that every pair $\{T_i,T_j\}$ satisfies~\eqref{eq:Assumption}.  {As above, we see that} 
				\[
					\calA_{\{T_0,T_1\}}, \quad \calA_{\{T_0,T_2\}},\quad
					\textup{and}\quad \calA_{\{T_1,T_2\}}
				\]
				{exist and }are nontrivial elements of $\Br X/\Br_0 X$.  Moreover, from the construction it is clear that $\calA_{\{T_1,T_2\}} = \calA_{\{T_0,T_1\}}+ \calA_{\{T_0,T_2\}}$, so  $\#(\Br X/\Br_0 X) = 4$ and these elements are the three nontrivial elements in $\Br X/\Br_0 X$.  A \texttt{Magma}~\cite{Magma} script to verify these claims can be found in the \texttt{arXiv} distribution of this article.  {Alternatively, the industrious reader may carry out these computations by hand using arguments similar to those in the proof of Proposition~\ref{prop:Nontriviality}.}
				\end{proof}

				When $k$ is a number field, work of Wittenberg enables us to give an alternative proof~\cite{Wittenberg-SpringerThesis}.  

				\begin{proof}[Proof of Theorem~\ref{thm:MainExplicit} assuming that $k$ is a number field]
					Let $\scrS_1 \subseteq \scrS$ be the set of $T \in \scrS$ such that $\eps_T \notin \kappa(T)^{\times2}$, and let $\scrS_0\subseteq \scrS_1$ be a minimal generating set for 
					\[
						\langle\Norm_{\kappa(T)/k}(\varepsilon_T) : T\in \scrS\rangle 
						\subseteq k^{\times}/k^{\times2}.
					\]
					By~\cite[Thm. 3.37]{Wittenberg-SpringerThesis}, $\#(\Br X/\Br_0 X) = 2^{\max(0, n - d - 1)}$ where $d = \#\scrS_0$ and $n = \#\scrS_1$, so, hereafter, we assume $n-d\geq 2$. Throughout the proof, we will often rely on~\cite[Prop. 3.39]{Wittenberg-SpringerThesis}, which states that 
				\begin{equation}\label{eq:product}
					\prod_{T\in\scrS}\Norm_{\kappa(T)/k}(\eps_T) \in k^{\times2}.
				\end{equation}
				Also, when discussing multiplicative independence of elements of the form $\Norm_{\kappa(T)/k}(\eps_T)$ below, we are implicitly working in $k^{\times}/k^{\times2}$.

				We will show that if $n - d = 2$, then there exists $\scrT \subset \scrS$ satisfying~\eqref{eq:Assumption}, and for which $\scrS_1\setminus \scrT \neq \emptyset$. By Proposition~\ref{prop:Nontriviality}, this implies that $\calA_\scrT$ is the unique nontrivial element of $\Br X/\Br_0 X$.  We will also show that $n - d\leq 3$ and that if $n - d  = 3$, then there exist three points $T_0, T_1$, and $T_2$ of $\scrS(k)$ such that every pair satisfies~\eqref{eq:Assumption}. Then, by Proposition~\ref{prop:Nontriviality}, $\calA_{\{T_0,T_1\}}, \calA_{\{T_0,T_2\}}, \calA_{\{T_1,T_2\}}$ are nontrivial elements of $\Br X/\Br_0 X$.  Moreover, from the construction it is clear that $\calA_{\{T_1,T_2\}} = \calA_{\{T_0,T_1\}}+ \calA_{\{T_0,T_2\}}$, so these elements are the three nontrivial elements in $\Br X/\Br_0 X$.  These two claims will complete the proof of the theorem.

				Consider the case when $n = 2$. Then we must have $d = 0$, which means that $\scrS_1$ must consist entirely of points of degree $> 1$. Since $\scrS$ has total degree 5, there is a degree $2$ point $T\in \scrS$. Take $\scrT := \{T\}$; note that it satisfies~\eqref{eq:Assumption}, and that $\scrS_1\setminus \scrT \neq \emptyset$.

				Henceforth, we assume that $n > 2$.  This means that for any degree $2$ subscheme $\scrT \subset \scrS_1$, we have $\scrS_1\setminus \scrT \neq \emptyset$. Suppose that $n = 3$.  Then $\scrS$ contains at least $(\#\scrS - 2)$ $k$-points. This implies that there is a $T_0 \in \scrS(k)$ with $\eps(T_0) \notin \kappa(T_0)^{\times2}$, which in turn shows that $d = 1$, because $n - d \geq 2$. Without loss of generality, take $\scrS_0 = \{T_0\}$. Then either there is a $T_1 \in \scrS(k)$ with $\eps_{T_1} \notin \kappa(T_1)^{\times2}$, in which case the set $\scrT := \{T_0,T_1\}$ satisfies~\eqref{eq:Assumption}, or the degree sequence of $\scrS$ is $(1,2,2)$.  In the latter case, the relation~\eqref{eq:product} implies that exactly one of the two degree $2$ points $T \in \scrS$ satisfies $\Norm_{\kappa(T)/k}(\eps_T) \in k^{\times2}$. Then $\scrT := \{T\}$ satisfies~\eqref{eq:Assumption}. 

				Next, assume that $n = 4$ and $d \leq 2$. Then there exist at least three degree $1$ points $T_0$, $T_1$ and $T_2$ in $\scrS_1$, so, in particular, $d > 0$. If $d = 1$, then every order $2$ subset of $\{T_0,T_1,T_2\}$ satisfies~\eqref{eq:Assumption}. If $d = 2$, then either every order $2$ subset of $\{T_0,T_1,T_2\}$ is multiplicatively independent, in which case $\scrT := \scrS \setminus \{T_0,T_1,T_2\}$ satisfies~\eqref{eq:Assumption}, or there is exactly one order $2$ subset of $\{T_0,T_1,T_2\}$ that is multiplicatively dependent. In the latter case, this unique subset satisfies~\eqref{eq:Assumption}.

				It remains to consider the case when $n = 5$ and $d \leq 3$. By~\eqref{eq:product}, we have $d \geq 2$. Write $\scrS = \{t_0,t_1,t_2,t_3,t_4\}$. Without loss of generality, assume that $\eps_{t_0}$ and $\eps_{t_1}$ are multiplicatively independent. If $d = 2$, then, up to renumbering, we have $\eps_{t_4} = \eps_{t_0}\eps_{t_1}$, and either
				\[
					\eps_{t_2} = \eps_{t_3} = \eps_{t_4} \quad\textup{or}\quad
					\eps_{t_0} = \eps_{t_2} = \eps_{t_3}.
				\]
				In either case, we obtain our desired result. If $d = 3$, then we may assume that $\eps_{t_0}$, $\eps_{t_1}$ and $\eps_{t_2}$ are multiplicatively independent, that $\eps_{t_3} = \eps_{t_0}\eps_{t_1}$, and that $\eps_{t_2} = \eps_{t_4}$. Then $\scrT := \{t_2,t_4\}$ satisfies~\eqref{eq:Assumption}.
			\end{proof}	

			\begin{cor}\label{cor:Vertical}
				Assume that for all {$T\in\scrS(k)$}, the quadric hypersurface $V(Q_T)$ has a smooth {$k$}-point. Then every element of $\Br X/\Br_0 X$ is vertical for a genus $1$ fibration $X\dasharrow \PP^1$ that has at most $2$ geometrically reducible fibers.
			\end{cor}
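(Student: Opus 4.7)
The plan is to use Theorem~\ref{thm:MainExplicit} to realize each nontrivial class of $\Br X/\Br_0 X$ as a cyclic algebra $\calA_\scrT$ for some $\scrT \subseteq \scrS$ satisfying~\eqref{eq:Assumption}, and then to exhibit an explicit projection from a plane making this class vertical. I would split into two subcases based on the shape of $\scrT$.

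If $\scrT = \{T_0, T_1\}$ consists of two $k$-rational points of $\scrS$, the hypothesis provides smooth $k$-points of $V(Q_{T_0})$ and $V(Q_{T_1})$ with associated tangent $k$-linear forms $\ell_{T_0}, \ell_{T_1}$. Taking the free linear form in the definition of $\calA_\scrT$ to be $\ell := \ell_{T_0}$, I obtain $\calA_\scrT = (L_\scrT/k,\ \ell_{T_1}/\ell_{T_0})$, which is manifestly the pullback of $(L_\scrT/k, t) \in \Br \kk(\PP^1)$ under the projection $g\colon X \dasharrow \PP^1$, $x \mapsto [\ell_{T_0}(x) : \ell_{T_1}(x)]$, from the plane $\Pi := V(\ell_{T_0}, \ell_{T_1})$. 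If instead $\scrT = \{T\}$ is a single degree $2$ point with $\kappa(T) = k(\sqrt{\eps_T})$ (Lemma~\ref{lem:DefnOfLScrT}), I write $\ell_T = a + b\sqrt{\eps_T}$ for $k$-linear forms $a, b$, so $\Norm_{\kappa(T)/k}(\ell_T) = a^2 - \eps_T b^2$; taking $\ell := a$, the class $\calA_\scrT = (L_\scrT/k,\ 1 - \eps_T(b/a)^2)$ is the pullback of $(L_\scrT/k,\ 1 - \eps_T t^2)$ under $g\colon x \mapsto [a(x) : b(x)]$, a projection from the $k$-rational plane $V(a, b)$.

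The generic fiber of $g$ is a generic anticanonical section of $X$, so by adjunction it is a smooth curve of genus $1$. Lemma~\ref{lem:Rank4NormalForm} identifies the geometrically reducible hyperplane sections $X \cap H$ as exactly those for which $H$ is tangent to some rank $4$ quadric $V(Q_{T'})$, $T' \in \scrS$, at a smooth point. By construction, $V(\ell_{T_0})$ and $V(\ell_{T_1})$ in the first case, and $V(\ell_T)$ together with its Galois conjugate in the second, furnish two such geometrically reducible fibers.

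To bound the total at $2$, I would argue that no other hyperplane in the pencil through $\Pi$ is tangent to any $V(Q_{T'})$ at a smooth point. For each $T' \in \scrS$, the set of such tangent hyperplanes is the dual variety $V(Q_{T'})^{\vee}$, a quadric surface sitting inside the $\PP^3 \subset (\PP^4)^{\vee}$ of hyperplanes through the vertex of $V(Q_{T'})$. The pencil through $\Pi$ is a line in $(\PP^4)^{\vee}$, so for $T' \in \scrS \setminus \scrT$, a generic line misses the codimension-$2$ surface $V(Q_{T'})^{\vee}$, while for $T' \in \scrT$ the intersection contains only the single distinguished tangent hyperplane as soon as the vertex $v_{T'}$ does not lie on the opposite tangent plane. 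The main obstacle is ensuring the existence, over the base field $k$, of a suitably generic choice of smooth points on $V(Q_{T_0})$ and $V(Q_{T_1})$ (respectively on $V(Q_T)$) so that the resulting pencil avoids $V(Q_{T'})^{\vee}$ for all $T' \notin \scrT$ and does not collect extra tangent hyperplanes at $T' \in \scrT$: over infinite fields this reduces to a dimension count in the parameter space of smooth points, while over finite fields it follows from Lang--Weil estimates applied to the smooth locus of each relevant $V(Q_T)$.
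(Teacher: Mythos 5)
Your construction of the vertical classes is the same as the paper's: realize each nontrivial class as $\calA_\scrT$ via Theorem~\ref{thm:MainExplicit} and project from the plane cut out by the tangent linear form(s), splitting into the cases $\scrT=\{T_0,T_1\}$ and $\scrT=\{T\}$ of degree $2$. One slip in the second case: $\kappa(T)$ is \emph{not} in general $k(\sqrt{\eps_T})$; Lemma~\ref{lem:DefnOfLScrT} only says $\eps_T$ comes from $k^\times$, while $\kappa(T)=k(\sqrt{d})$ for the discriminant $d$ of the residue field, and $L_\scrT=k(\sqrt{\eps_T})$ is typically a \emph{different} quadratic field (see the worked example in \S\ref{subsec:Algorithm}, where $\kappa(T)=\Q(\sqrt3)$ but $L_\scrT=\Q(\sqrt{-5})$). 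The norm form is therefore $a^2-db^2$, not $a^2-\eps_T b^2$; this does not affect verticality for $x\mapsto[a(x):b(x)]$, but the identification should be corrected.

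The genuine gap is in the count of geometrically reducible fibers, where you diverge from the paper. The pencil of hyperplanes through $\Pi$ is not a free line in $(\PP^4)^\vee$: it is the line joining a point of $V(Q_{T_0})^\vee$ to a point of $V(Q_{T_1})^\vee$ (or joining two conjugate points of $V(Q_T)^\vee$), so the relevant family of lines is a constrained $4$-dimensional family, and the statement that it generically avoids the codimension-$2$ surfaces $V(Q_{T'})^\vee$ for $T'\in\scrS\setminus\scrT$ is exactly an ``expected codimension'' claim. What must be ruled out is that the incidence condition holds \emph{identically} on this constrained family (and likewise that every choice produces extra tangencies for $T'\in\scrT$); a dimension count of the ambient spaces does not do this, and your proposal defers it to ``a dimension count in the parameter space of smooth points'' without carrying it out. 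This non-degeneracy is precisely the content of the paper's proof: it observes that the bad locus is closed in $\prod_{T\in\scrT}R_{\kappa(T)/k}\left(V(Q_T)\right)$, passes to $\kbar$, and exhibits explicit tangent forms $\ell_1,\ell_2$ for normalized quadrics for which the projection from $V(\ell_1,\ell_2)$ has at most $2$ geometrically reducible fibers (a computation verified by the accompanying \texttt{Magma} script); Zariski density of the smooth $\kappa(T)$-points then produces a good choice over $k$. To make your route work you would need to supply that non-containment argument (for instance by an explicit example, as the paper does, or by a genuinely synthetic argument about lines meeting the three remaining dual quadrics), and the finite-field remark via Lang--Weil also presupposes exactly this properness of the bad locus.
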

			\begin{proof}
				Let $\calA$ be a nontrivial element of $\Br X/\Br_0 X$. By Theorem~\ref{thm:MainExplicit}, $\calA = \calA_{\scrT}$ for some $\scrT\subseteq\scrS$ satisfying~\eqref{eq:Assumption}.  If $\scrT = \{T_0,T_1\}$, let $\ell_i$ be a $k$-linear form tangent to $V(Q_{T_i})$ at a smooth point for $i = 0, 1$. Otherwise, $\scrT$ consists of a degree $2$ point $T$; in this case let $d$ be the discriminant of the residue field of $T$, and let $\ell_0$ and $\ell_1$ be $k$-linear forms such that $\ell_0 + \sqrt{d}\ell_1$ is tangent to $V(Q_T)$ at a smooth point.

				Define a map $f\colon X\dasharrow\PP^1$ by $x \mapsto [\ell_0(x):\ell_1(x)]$. Since $\calA_{\scrT}$ is equal to either 
				\[
					\left(L_{\scrT}/k, \frac{\ell_0}{\ell_1}\right)\quad\textup{or}\quad
					\left(L_{\scrT}/k, \frac{\ell_0^2 - d\ell_1^2}{\ell_1^2}\right),
				\]
				it is clear that $\calA_{\scrT}\in\Br_{\verti}^{(f)} X$. Moreover, every fiber of $f$ is an intersection of a hyperplane with $X$, i.e., a genus $1$ curve.

				It remains to prove that $f$ has at most $2$ reducible fibers.  Since the $\kappa(T)$-points on $V(Q_T)$ are Zariski dense, it suffices to show that the locus of $\{\ell_0, \ell_1\}$ that give rise to a rational map with at least $3$ geometrically reduced fibers is a proper closed subset of $\prod_{T \in \scrT}R_{\kappa(T)/k}\left(V(Q_T)\right)$, where $R_{\kappa(T)/k}(-)$ denotes Weil restriction of scalars.  Thus, we may pass to the separable closure $\kbar$.  

				First consider the case when $\#\scrT = 2$.  After a change of variables, we may assume that $\{Q_t : t\in \scrT(\kbar)\}$ equals $\{Q_1,Q_2\}$ where
				\[
					Q_1 := x_0^2 - x_2^2 - x_3^2 + \gamma^2x_4^2, \quad 
					Q_2 := x_1^2 - \alpha^2x_2^2 + \beta^2x_3^2 - x_4^2,
				\]
				for some $\alpha,\beta,\gamma\in\kbar^{\times}.$  The locus of maps with at least $3$ geometrically reduced fibers is closed; we must show that it is proper.  Consider the linear forms
				\[
					\ell_1 := x_0 - x_2 - \gamma x_3 + \gamma^2x_4, \quad 
					\ell_2 := x_1 - \alpha x_2 + \beta^2\alpha^{-1}x_3 - \beta\alpha^{-1}x_4.
				\]
				A computation shows that $V(\ell_1)$ and $V(\ell_2)$ are respectively tangent to $V(Q_1)$ and $V(Q_2)$ at smooth points, and that projection from $V(\ell_1,\ell_2)$ gives a map with at most $2$ geometrically reduced fibers.\footnote{A \texttt{Magma}~\cite{Magma} script to verify this claim and the similar claim below can be found in the \texttt{arXiv} distribution of this article.}
				  
				Now consider the case when $\scrT$ consists of a single degree $2$ point $T$.  Then 
				\[
					\prod_{T \in \scrT}R_{\kappa(T)/k}\left(V(Q_T)\right)(\kbar) 
					= \left(V(Q_t)(\kbar)\right)^2
				\]
				for some $t\in T(\kbar)$.  After a change of variables, we may assume that $Q_t =Q_1$ above, and that $Q_2$ is some other quadric in the pencil.  Consider the linear forms
				\[
					\ell_1 := x_0 - x_2 - \gamma x_3 + \gamma^2x_4, \quad 
					\ell_2 := x_0 - \gamma x_2 - x_3 + \gamma^2x_4.
				\]
				A computation shows that $V(\ell_1)$ and $V(\ell_2)$ are both tangent to $V(Q_1)$ at smooth points, and that projection from $V(\ell_1,\ell_2)$ gives a map with at most $2$ geometrically reduced fibers.  This completes the proof.
			\end{proof}
			\begin{cor}
				Assume that for all $T\in\scrS$ of degree at most $2$, the quadric hypersurface $V(Q_T)$ has a smooth $\kappa(T)$-point.  If $\#(\Br X/\Br_0 X) = 4$, then every nontrivial element is cyclic for the same quadratic extension.
			\end{cor}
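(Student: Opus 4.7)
The plan is to leverage the explicit structure provided by Theorem~\ref{thm:MainExplicit}: when $\#(\Br X/\Br_0 X) = 4$, the three nontrivial classes are $\calA_{\{T_0,T_1\}}, \calA_{\{T_0,T_2\}}, \calA_{\{T_1,T_2\}}$ for a triple of $k$-points $T_0,T_1,T_2 \in \scrS(k)$ such that every two-element subset satisfies~\eqref{eq:Assumption}. By construction, the cyclic algebra $\calA_{\{T_i,T_j\}}$ has associated quadratic extension $L_{\{T_i,T_j\}} = k(\sqrt{\eps_{T_i}}) = k(\sqrt{\eps_{T_j}})$, so the claim reduces to showing that the three fields $k(\sqrt{\eps_{T_0}}), k(\sqrt{\eps_{T_1}}), k(\sqrt{\eps_{T_2}})$ all coincide.

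Next I would unpack condition~\eqref{eq:Assumption}. Because each $T_i$ is a $k$-point, $\kappa(T_i) = k$ and the norm in the product in~\eqref{eq:Assumption} is trivial. Thus condition~\eqref{eq:Assumption} applied to the pair $\{T_i, T_j\}$ says exactly that $\eps_{T_i}, \eps_{T_j} \in k^\times \setminus k^{\times 2}$ and $\eps_{T_i}\eps_{T_j} \in k^{\times 2}$. Since this holds simultaneously for all three pairs drawn from $\{T_0, T_1, T_2\}$, all three classes $\eps_{T_0}, \eps_{T_1}, \eps_{T_2}$ represent the same nontrivial element of $k^\times/k^{\times 2}$, and so the three quadratic extensions coincide.

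There isn't really a hard step here: the whole content has been front-loaded into Theorem~\ref{thm:MainExplicit}, which already guarantees the existence of the triple $T_0, T_1, T_2 \in \scrS(k)$ with the right simultaneous compatibility. The only potentially subtle point I would double-check is that the definition of $L_{\scrT}$ used in constructing $\calA_{\scrT}$ in \S\ref{subsec:Construction} genuinely depends only on the common class $[\eps_T] \in k^\times/k^{\times 2}$ for $T \in \scrT$, which is exactly the content of Lemma~\ref{lem:DefnOfLScrT} and the remark immediately following it. Once this is noted, the corollary follows in one line from the pairwise conditions.
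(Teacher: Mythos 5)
Your proof is correct and is essentially the paper's own argument: the paper likewise invokes Theorem~\ref{thm:MainExplicit} to identify the three nontrivial classes as $\calA_{\{T_i,T_j\}}$ for $k$-points $T_0,T_1,T_2$ with every pair satisfying~\eqref{eq:Assumption}, and then observes that the definition of $L_{\scrT}$ forces $L_{\{T_0,T_1\}} = L_{\{T_0,T_2\}} = L_{\{T_1,T_2\}}$. Your unpacking of why the pairwise conditions place all $\eps_{T_i}$ in one square class simply spells out the step the paper leaves implicit.
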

			\begin{proof}
				It follows from the definition of $L_{\scrT}$ given in Lemma~\ref{lem:DefnOfCalAScrT} that $L_{\{T_0,T_1\}} = L_{\{T_0,T_2\}}= L_{\{T_1,T_2\}}$.
			\end{proof}

	\section{Computational applications}\label{sec:Computations}

		\subsection{A practical algorithm for computing $\Br X/\Br_0 X$}
		\label{subsec:Algorithm}
		
			The construction given in \S\ref{subsec:Construction}, together with Theorem~\ref{thm:MainExplicit}, yields a practical algorithm for the computation of $\Br X/\Br_0 X$, provided that for all $T\in\scrS$, the quadric hypersurface $V(Q_T)$ has a smooth $\kappa(T)$-point. (This hypothesis is satisfied, for example, whenever $k$ is a global field of characteristic not $2$ and $X(\A_k) \neq \emptyset$; see Lemma~\ref{lem:Rank4Solvable}.)  When $k$ is a number field, Bright, Bruin, Flynn, and Logan have implemented a different algorithm to compute $\Br X/\Br_0 X$~\cite{BBFL}.  Our algorithm has a simple implementation and in practice seems to be competitive with ~\cite{BBFL}. 

			The algorithm takes as input two quadrics $Q$ and $\widetilde{Q}$ with coefficients in $k$ such that $X = V(Q) \cap V(\widetilde{Q})$, and gives as output a complete list of the elements of $\Br X/\Br_0 X$.

			\begin{enumerate}
				\item Let $M$ and $\widetilde{M}$ be the symmetric matrices associated to the defining quadrics $Q$ and $\widetilde{Q}$, respectively. Compute the \defi{characteristic polynomial} of $X$
				\[
					f(\lambda, \mu) := \det(\lambda M + \mu\widetilde{M}).
				\]

				\item If $f(\lambda, \mu)$ is irreducible or has an irreducible quartic factor, then $\Br X = \Br_0 X$, so give as output $\{\textup{id}\}$. Otherwise, for a point $T\in \scrS := V(f) \subseteq\PP^1$, let $H_T$ be a hyperplane that does not contain the vertex of $Q_T$ and let $\eps_T$ be the discriminant of $H_T\cap V(Q_T)$.

				\item If there exist three degree $1$ points $T_0, T_1, T_2\in \scrS$ such that $\eps_{T_i}\notin k^{\times2}$ and $\eps_{T_i}\eps_{T_j} \in k^{\times2}$ for all $i,j\in\{0,1,2\}$, then choose points $P_i\in V^{\textup{smooth}}(Q_{T_i})(k)$.  Let $\ell_i$ be the linear form defining the tangent plane to $V(Q_{T_i})$ at $P_i$. Give as output
				\[
					\frac{\Br X}{\Br_0 X} = \left\{
					\textup{id}, \left(\eps_{T_0}, \ell_0\ell_1^{-1}\right),
					\left(\eps_{T_0}, \ell_0\ell_2^{-1}\right),
					\left(\eps_{T_0}, \ell_2\ell_1^{-1}\right)
					\right\}.
				\]

				\item If we get to this step, then $\#(\Br X/\Br_0 X) \leq 2$. If there is a $k$-subscheme $\scrT\subseteq \scrS$ that satisfies~\eqref{eq:Assumption} and a point $T'\in\scrS\setminus\scrT$ such that $\eps_{T'}\notin\kappa(T')^{\times2},$ then choose points $P_T\in V^{\textup{smooth}}(Q_{T})(\kappa(T))$ for all $T\in\scrT$. Let $\ell_T$ be the linear form defining the tangent plane to  $V(Q_{T})$ at $P_T$ and let $\ell$ be any other linear form.  Give as output
				\[
					\frac{\Br X}{\Br_0 X} = 
					\left\{
						\textup{id}, \left(\eps_{T}, \ell^{-2}
						\prod_{T\in\scrT}\Norm_{\kappa(T)/k}(\ell_T)\right)
					\right\}
				\]  

				\item If we get to this step, then $\Br X = \Br_0 X$, so give as output $\{\textup{id}\}$.
			\end{enumerate}
		
			\begin{example}
				Consider the degree $4$ del Pezzo surface $X$ over $\Q$ given by
				\begin{align*}
					x_0x_1 + x_2x_3 + x_4^2 &= 0, \\
					- x_0^2 - 3x_1^2 + x_2^2 - x_2x_3 + 2x_3^2 + 2x_3x_4 &= 0.
				\end{align*}
				The characteristic polynomial is $f(\lambda,\mu) := 2(\lambda^2 - 12\mu^2)(\lambda^3 - 2\lambda^2\mu - 7\lambda\mu^2 + 4\mu^3)$. Thus $\scrS = V(f) \subseteq \PP^1$ consists of a degree $2$ point $T$ and a degree $3$ point $T'$. We may take $H_T = V(x_0)$ and $H_{T'} = V(x_2)$; then $\eps_T = 120\sqrt{3} - 240 = - 5\cdot(2\sqrt{3} - 6)^2$ and $\eps_{T'} = -12\alpha^2 + 40\alpha - 16$, where $\alpha^3 - 2\alpha^2 - 7\alpha + 4 = 0$. Thus, the scheme $\scrT := \{T\}$ satisfies \eqref{eq:Assumption}, and $T' \in \scrS\setminus\scrT$ satisfies $\eps_{T'} \notin \kappa(T')^{\times 2}$. Furthermore, $\scrS(\Q) = \emptyset$, so $\Br X/\Br_0 X = \Z/2\Z$, generated by the class of $\calA_\scrT$. Let $P_T = [4:\sqrt{3}:1:0:0] \in V^{\textup{smooth}}(Q_T)\left(\Q(\sqrt{3})\right)$; then $\ell_T = 2x_0 + 2x_2 - x_3 - 2\sqrt{3}(x_1 - x_3)$, and finally
				\[
					\calA_\scrT = \left( -5, \frac{(2x_0 + 2x_2 - x_3)^2 - 12(x_1 - x_3)^2}{x_0^2} \right).
				\]
			\end{example}

		\subsection{Parameter spaces}\label{subsec:ParameterSpaces}

			The results in \S\ref{sec:VerticalElements} enable us to easily write down parameter spaces whose general points give a del Pezzo surface of degree $4$ with a nonconstant Brauer class. Moreover, the largest dimensional such space parametrizes every surface with a nonconstant Brauer class.
			
			For a general degree $4$ del Pezzo surface $X$ with a nonconstant Brauer class, the degeneracy locus $\scrS$ consists of a degree $2$ point $T$ and a degree $3$ point $T'$. The following proposition shows that the symmetric matrices associated to $X$ can be given in block form, where the sizes of the blocks correspond to the degrees of the points of $\scrS$. Thus, we may construct the aforementioned parameter space by first starting with a subvariety of $\left(\PP^{3 + 6 - 1}\right)^2$, where each coordinate corresponds to a coefficient of one of the defining quadrics of $X$.
			
			\begin{prop}
				\label{prop:block diagonal}
				Let $Q_1$ and $Q_2$ be quadrics in $n + 1$ variables over a field $k$ of characteristic not $2$, with $M_1$ and $M_2$ their respective associated symmetric matrices. Let $\{m_0, m_1, \ldots, m_r\}$ be the degrees of the closed points in $V\left(\det(\lambda M_1 + \mu M_2)\right)\subseteq \PP^1$.  Assume that $V(Q_1,Q_2) \subseteq \PP^n$ is smooth.  Then there is a change of variables, defined over $k$, such that $M_1$ and $M_2$ can be written in block diagonal form, where the blocks have sizes $\{m_0, m_1, \ldots, m_r\}$.
			\end{prop}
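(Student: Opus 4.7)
The plan is to decompose $\kbar^{n+1}$ as an orthogonal direct sum of the vertex lines of the degenerate quadrics in the pencil, observe that this decomposition is $G_k$-stable, and descend it to $k$ by Galois descent.

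First, I would use the smoothness of $V(Q_1,Q_2)$ to deduce that the characteristic polynomial $f(\lambda,\mu) := \det(\lambda M_1 + \mu M_2)$ is squarefree over $\kbar$ and that every degenerate quadric in the pencil has corank exactly $1$; this is the ``regular pencil'' condition and is essentially proven in~\cite[Prop.~3.26]{Wittenberg-SpringerThesis} in the case $n = 4$, the argument being the same in general. (If either condition failed, one could produce a point of $V(Q_1) \cap V(Q_2)$ lying in the vertex of some pencil member, at which the Jacobian of $(Q_1,Q_2)$ drops rank.) After a $k$-linear change of basis of the pencil if necessary, I may assume $M_1$ is invertible, and then the $n+1$ roots of $f$ over $\kbar$ correspond to the $n+1$ distinct eigenvalues of $M_1^{-1}M_2$, each with a one-dimensional eigenspace spanned by the vertex $v_j$ of the associated singular quadric.

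Next I would verify orthogonality over $\kbar$: for eigenvectors $v_i, v_j$ corresponding to distinct eigenvalues $t_i \ne t_j$, the identities $M_2 v_i = t_i M_1 v_i$ and $M_\ell^T = M_\ell$ give
\[
t_i\, v_j^T M_1 v_i \;=\; v_j^T M_2 v_i \;=\; v_i^T M_2 v_j \;=\; t_j\, v_j^T M_1 v_i,
\]
so $v_j^T M_1 v_i = 0$ and hence $v_j^T M_2 v_i = 0$. Thus the vertex lines give an orthogonal decomposition $\kbar^{n+1} = \bigoplus_{j=0}^{n} \kbar\cdot v_j$ for both bilinear forms simultaneously.

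Finally, I would carry out Galois descent. The group $G_k$ permutes the vertex lines $\{\kbar\cdot v_j\}$ via its action on the roots of $f$, with orbits in bijection with the closed points $T_0,\ldots,T_r$ of $V(f)\subseteq\PP^1$ and orbit sizes $m_0,\ldots,m_r$. For each $i$, the subspace $V_i := \bigoplus_{t_j \in T_i(\kbar)}\kbar\cdot v_j \subseteq \kbar^{n+1}$ is $G_k$-stable of dimension $m_i$, so by Galois descent it arises as $U_i \otimes_k \kbar$ for a unique $k$-subspace $U_i \subseteq k^{n+1}$ of dimension $m_i$. Comparing dimensions after $-\otimes_k\kbar$ gives $k^{n+1} = \bigoplus_i U_i$, and the orthogonality relations $U_i^T M_\ell U_j = 0$ (for $i \ne j$, $\ell = 1,2$) descend from the analogous relations over $\kbar$. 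Concatenating arbitrary $k$-bases of the $U_i$ yields a $k$-rational change of variables placing $M_1$ and $M_2$ simultaneously in block diagonal form with blocks of sizes $m_0, m_1, \ldots, m_r$.

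The main obstacle is the first step: rigorously extracting from the smoothness of the base locus both the squarefreeness of $f$ and the corank-$1$ condition on each singular pencil member. Once these are in hand, the remainder is a fairly routine simultaneous diagonalization combined with Galois descent applied to the resulting orthogonal decomposition.
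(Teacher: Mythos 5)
Your proof is correct, and its skeleton is the same as the paper's — both rest on the fact that the kernel vectors of the degenerate members of the pencil form a simultaneous orthogonal basis of $\kbar^{n+1}$, grouped into Galois orbits matching the closed points of $V(f)$ — but the descent to $k$ is carried out differently. The paper quotes~\cite[Prop.~3.28]{Wittenberg-SpringerThesis} for the orthogonal basis of kernel vectors and then writes down an explicit $k$-rational basis: for each closed point $T_j$ it picks a primitive element $\theta_j$ of $\kappa(T_j)$ and forms the trace-type vectors $w_{i,j}=\sum_{\ell}\sigma_j^{\ell}(\theta_j^i)v_{\ell,j}$, whose rationality and independence are then checked by hand (a Vandermonde argument in the conjugates of $\theta_j$). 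You instead prove the orthogonality directly by the eigenvalue computation and replace the explicit basis by abstract Galois descent: each orbit span is a $G_k$-stable subspace of $\kbar^{n+1}$, hence of the form $U_i\otimes_k\kbar$, and arbitrary $k$-bases of the $U_i$ give the block form. Your route buys some robustness — it avoids choosing an automorphism $\sigma_j$ of order $m_j$ in the Galois closure (such an element need not exist, e.g.\ for a quartic point whose Galois closure has group $A_4$; the paper's construction should really be read as a sum over the embeddings of $\kappa(T_j)$) — at the harmless cost of being non-explicit. Two small caveats: your reduction ``after a $k$-linear change of basis of the pencil, $M_1$ is invertible'' presupposes a $k$-rational nondegenerate member, which could fail over a very small finite field; since invertibility is only needed for the $\kbar$-level statements (independence and orthogonality of the $v_j$), either make that change of basis over $\kbar$ or argue directly from $(\lambda_j M_1+\mu_j M_2)v_j=0$ for distinct $[\lambda_j:\mu_j]$. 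Also, the squarefree/corank-$1$ input is stated in~\cite[Prop.~3.26]{Wittenberg-SpringerThesis} for $n=4$ only; your sketch of the general argument is the right one, and this reliance is no worse than the paper's own citation of the same source.
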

			
			\begin{remark}
				The case where $m_j = 1$ for all $j$ already appears in~\cite[Prop. 3.28]{Wittenberg-SpringerThesis}; the argument we give here has the same structure.
			\end{remark}
			\begin{proof}
				Let $T_j$ denote the closed point in the degeneracy locus of degree $m_j$ and let $v_{i,j}\in\kbar^n$ denote the kernels of the corresponding quadrics for $i = 0, \ldots, m_j-1$.  By~\cite[Prop. 3.28]{Wittenberg-SpringerThesis}, $\{v_{i,j} : j = 0, \ldots, r, \; i = 0, \ldots, m_j-1 \}$ is an orthogonal basis under $Q_1$ and $Q_2$.  Let $\theta_j$ be a $k$-algebra generator for $\kappa(T_j)$, let $\tilde\kappa(T_j)$ denote the Galois closure of $\kappa(T_j)$, and let $\sigma_j\in \Gal({\tilde\kappa(T_j)/k})$ be an order $m_j$ element.  Possibly after renumbering, we may assume that $\sigma_j^i(v_{0,j}) = v_{i,j}$.  We define
				\[
					w_{i,j} := \sum_{\ell = 0}^{m_j - 1} \sigma_j^{\ell}(\theta^i_j) v_{\ell, j}.
				\] 
				It is easy to check that $w_{i,j} \in k^n$, that $w_{i,j}$ form a basis, and that if $j \ne j'$, then $w_{i,j}$ and $w_{i',j'}$ are orthogonal under $Q_1$ and $Q_2$.  Then, the change of basis matrix that takes $\left\{w_{i,j}\right\}_{i,j}$ to the standard basis of $k^n$ transforms $M_1$ and $M_2$ into block diagonal form.
			\end{proof}

			We think of a point in $({\bf a},{\bf b}) \in \left(\PP^{3 + 6 - 1}\right)^2$ as two symmetric matrices
			\[
				A := \begin{pmatrix}
					2a_{00} & a_{01} & 0 & 0 & 0\\
					a_{01} & 2a_{11} & 0 & 0 & 0\\
					0 & 0 & 2a_{22} & a_{23} & a_{24}\\ 
					0 & 0 & a_{23} & 2a_{33} & a_{34}\\ 
					0 & 0 & a_{24} & a_{34} & 2a_{44}\\ 
				\end{pmatrix},
				\quad
				B := \begin{pmatrix}
					2b_{00} & b_{01} & 0 & 0 & 0\\
					b_{01} & 2b_{11} & 0 & 0 & 0\\
					0 & 0 & 2b_{22} & b_{23} & b_{24}\\ 
					0 & 0 & b_{23} & 2b_{33} & b_{34}\\ 
					0 & 0 & b_{24} & b_{34} & 2b_{44}\\ 
				\end{pmatrix}.
			\]
			After possibly replacing $A$ and $B$ with some linear combinations (which does not affect the corresponding del Pezzo surface), and after possibly making a linear change of coordinates on $x_0$ and $x_1$, we may assume that 
			\[
			a_{00} = a_{11} = b_{01} = 0.
			\]
			If $a_{01}b_{00}b_{11} = 0$, then the corresponding del Pezzo surface is singular. Hence, we assume that $a_{01} = b_{00} = 1$ and that $b_{11} \neq 0$. By Theorem~\ref{thm:MainExplicit}, to ensure the existence of a nonconstant Brauer class, it remains to impose that $\Norm_{\kappa(T)/k}(\eps_T) \in k^{\times 2}$. (The remaining conditions, that $\eps_T$ and $\eps_{T'}$ are nonsquares in the fields $\kappa(T)$ and $\kappa(T')$, respectively, are satisfied generically.) To this end, we introduce a new variable $w$, we let $\widetilde{A}$ and $\widetilde{B}$ be the bottom right $4\times 4$ submatrices of $A$ and $B$, respectively, and set 
			\[
				\Norm_{\kappa(T)/k}(\eps_T) = \det\left(2\sqrt{-b_{11}}\widetilde{A} + \widetilde{B}\right)\det\left(-2\sqrt{-b_{11}}\widetilde{A} + \widetilde{B}\right) = w^2,
			\]
			which we consider as a double cover of $\A^{13}$. This is the desired parameter space.
			\begin{remark}
				It is possible to obtain smaller dimensional parameter spaces whose general points correspond to degree $4$ del Pezzo surfaces together with one or three distinct nonconstant Brauer classes by further specializing the degree of the points in the degeneracy locus $\scrS$. These spaces, of course, will not parametrize all del Pezzo surfaces of degree $4$ with a nonconstant Brauer class.
			\end{remark}
			
			\subsubsection{The Brauer group of the generic fiber}
			The methods of this paper show that the generic points of the above parameter spaces have nontrivial Brauer class if the associated rank $4$ quadrics $V(Q_T)$ have $\kappa(T)$-rational points for all $T\in \scrT$.  There is evidence to suggest that this assumption is necessary absent an assumption on the cohomological dimension of the function field of the parameter space. More precisely, {our methods show that for any del Pezzo surface $X$ over any field $k$ of characteristic not $2$ such that there exists a $\scrT\subset \scrS$ satisfying~\eqref{eq:Assumption} and a $T\in\scrS\setminus\scrT$ with $\eps_T\notin \kappa(T)^{\times2}$ (e.g., the generic fiber of such parameter spaces), there is always a nontrivial element $\phi\in\HH^1(G_K, \Pic \calX_\Kbar)[2].$  }
			However, $\phi$ arises from a nontrivial class in $\Br_1 X/\Br_0 X$ if and only if it is in the kernel of the boundary map $d_2^{1,1}\colon\HH^1(G_k, \Pic X_\kbar) \to \HH^3(G_k, \kbar^\times)$ of the Hochschild-Serre spectral sequence.	
					
			The map $d_2^{1,1}$ breaks up as the composition of two maps. To see this, we fix some notation.  Let $\calD\subset \Div X_{\kbar}$ be a finite Galois invariant set of divisors that generate $\Pic X$.  Let $R = \ker\left(\calD \to \Pic X_\kbar\right)$, so that the sequences
			\begin{equation*}
				0 \to R \to \calD \to \Pic X_\kbar \to 0, \quad\textup{and}\quad
				0 \to \kbar^{\times}\to \divv^{-1}(R) \to R \to 0.
			\end{equation*}
			are short exact. Consider the associated long exact sequences in Galois cohomology. Kresch and Tschinkel~\cite[Proposition~6.1]{KreschTschinkel} show that $d_2^{1,1}$ coincides with composition of the boundary maps 
			\[
				\delta\colon\HH^1(G_k, \Pic X_\kbar) \to \HH^2(G_k, R)\quad\textup{and}\quad \partial\colon\HH^2(G_k, R) \to \HH^3(G_k, \kbar^\times).
			\] 
			
			We now specialize to the case where $\scrS$ contains two $k$-rational points $T_0$ and $T_1$, $\scrT := \{T_0, T_1\}$ satisfies~\eqref{eq:Assumption}, and there exists a $T\in\scrS\setminus\scrT$ such that $\eps_T\not\in\kappa(T)^{\times2}.$  Then there is a nontrivial element of $\phi\in\HH^1(G_k, \Pic X_\kbar)[2]$ corresponding to $\scrT$.  One can show that $\delta(\phi)$ is trivial in $\HH^2(G_k, R)$ if and only if both $W_{T_0}$ and $W_{T_1}$ have $k$-points; this computation does not depend on the field $k$. {Thus, there is hope of finding a field $k$ and a del Pezzo surface $X$ such that $\partial(\delta(\phi))$ is nontrivial in $\HH^3(G_k,\kbar^\times).$  In fact, this does occur for cubic surfaces, as shown by recent work of~\cite{Uematsu}; the above argument borrows ideas from his work.}

	\section{Arithmetic applications}\label{sec:Arithmetic}

		In this section, we restrict to the case where $k$ is a global field of characteristic different from $2$.  Otherwise, the notation remains as in~\S\ref{subsec:Notation}. We prove the results stated in the introduction (\S\S\ref{subsec:proof1}--\ref{subsec:proof3}), and explain how the results from \S\ref{sec:VerticalElements} can be used to simplify the computation of the $X(\A_k)^{\Br}$ (\S\ref{subsec:Eval}). Finally, in \S\ref{subsec:ConditionD}, we give a conditional proof that for certain del Pezzo surfaces of degree $4$ over number fields, the set of $k$-points is dense in the Brauer set.

		\begin{lemma}\label{lem:Rank4Solvable}
			Assume that $X(\A_k)\neq \emptyset$.  Then $V(Q_T)$ has a smooth $\kappa(T)$-point for all $T\in \scrS$.
		\end{lemma}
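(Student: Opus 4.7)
The plan is to reduce the statement to the Hasse--Minkowski theorem for the rank $4$ quadratic form $Q_T$ (viewed as a form over the global field $\kappa(T)$). Since $Q_T$ is a $\kappa(T)$-linear combination of $Q$ and $\widetilde{Q}$, we have $X_{\kappa(T)} \subseteq V(Q_T)$, so the adelic point on $X$ will feed local smooth points into $V(Q_T)$.

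First I would verify that $X$ does not pass through the vertex $P$ of $V(Q_T)$. If it did, then $Q(P) = \widetilde{Q}(P) = 0$ and, because $\nabla Q_T(P) = 0$ is a nontrivial $\kappa(T)$-linear combination of $\nabla Q(P)$ and $\nabla \widetilde{Q}(P)$, these gradients would be linearly dependent at $P$; by the Jacobian criterion $X$ would be singular there, contradicting the hypothesis that $X$ is smooth. Thus $X_{\kappa(T)} \subseteq V(Q_T)^{\mathrm{sm}}$, so every point of $X$ is automatically a smooth point of $V(Q_T)$.

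Next, I would produce a smooth local point on $V(Q_T)$ at every place of $\kappa(T)$. Fix a place $w$ of $\kappa(T)$ and let $v$ be its restriction to $k$. The inclusion $k_v \hookrightarrow \kappa(T)_w$ factors through $\kappa(T)$, so any element of $X(k_v)$ (which is nonempty by hypothesis) gives rise, by base change, to an element of $X_{\kappa(T)}(\kappa(T)_w)$, and hence to a point of $V(Q_T)^{\mathrm{sm}}(\kappa(T)_w)$ by the previous paragraph.

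Finally, I would invoke the Hasse--Minkowski theorem over the global field $\kappa(T)$. After quotienting by the one-dimensional radical of $Q_T$, the existence of a smooth $F$-point on $V(Q_T)$ is equivalent to the isotropy over $F$ of the induced nondegenerate rank $4$ quadratic form in four variables. Since we have just produced such isotropy over every completion $\kappa(T)_w$, Hasse--Minkowski gives isotropy over $\kappa(T)$ itself, which yields the desired smooth $\kappa(T)$-point on $V(Q_T)$. The only mildly delicate step is the Jacobian argument excluding the vertex, which ensures that the local points we construct are genuinely smooth and not trapped at the singular point of the cone; once that is in hand, the rest is a direct application of the local-to-global principle for quadrics.
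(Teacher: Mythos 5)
Your proof is correct and follows essentially the same route as the paper's: base-change the adelic points of $X$ to smooth local points of the cone $V(Q_T)$ over every completion of $\kappa(T)$, pass to the nondegenerate rank $4$ form (your quotient by the radical is exactly the paper's hyperplane section avoiding the vertex, with the paper projecting the local point from the vertex into that hyperplane), and conclude by Hasse--Minkowski over $\kappa(T)$. The only cosmetic difference is that you spell out the Jacobian-criterion argument showing $X$ misses the vertex, which the paper states in one line.
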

		\begin{proof}
			Let $v$ be a place of $\kappa(T)$ and let $H\subseteq\PP^4$ be a hyperplane that does not contain the vertex of $V(Q_T)$.  By assumption, there exists a point $P_v \in X(\kappa(T)_v)$, and thus $P_v\in V(Q_T)$.  Since $X$ is smooth, $P_v$ is not the vertex of $V(Q_T)$.  Consider the line $L$ that passes through $P_v$ and the vertex of $V(Q_T)$; it is defined over $\kappa(T)_v$, and so $L$ and $H$ intersect in a $\kappa(T)_v$-point $P_v'$.   Clearly, $L$ is contained in $V(Q_T)$ and so $P_v' = L \cap H \subseteq V(Q_T)\cap H$.  Thus $(V(Q_T)\cap H)(\A_{\kappa(T)})\neq \emptyset.$  Since smooth quadrics in at least $3$ variables satisfy the Hasse principle, this completes the proof.
		\end{proof}

		Therefore, if $X(\A_k)\neq\emptyset$, then we may apply the results of~\S\ref{sec:VerticalElements}.

		\subsection{Proof of Theorem~\ref{thm:MainIntro}}\label{subsec:proof1}

			Most of the theorem follows immediately from Theorem~\ref{thm:MainExplicit} and Corollary~\ref{cor:Vertical}. It remains to show that if $\#(\Br X/\Br_0 X) = 4$, then there is a map $f\colon X \dasharrow \PP^2$ such that $\Br X = \Br_{\verti}^{(f)} X$. By Theorem~\ref{thm:MainExplicit}, there exist three $k$-points $T_0$, $T_1$ and $T_2 \in \scrS$ such that every pair satisfies~\eqref{eq:Assumption}. Let $\ell_i$ be a $k$-linear form such that the associated hyperplane is tangent to $V(Q_{T_i})$ at a smooth point. Then by Theorem~\ref{thm:MainExplicit} and the definition of $\calA_{\scrT}$, the map 
			\[
				f\colon X \dasharrow \PP^2, \qquad x \mapsto [\ell_0(x):\ell_1(x):\ell_2(x)]
			\]
			has the desired property.\qed

		\subsection{Proof of Corollary~\ref{cor:FiberManifestation}}
		\label{subsec:proof2}

			Assume that there exists $t \in \PP^n(k)$ such that $f^{-1}(t)(\A_k) \neq \emptyset$; this will imply that $X(\A_k)^{\Br} \neq \emptyset$.  Fix a point $(P_v) \in f^{-1}(t)(\A_k)$, and let $U\subseteq \PP^n$ be the largest Zariski open set such that $\Br X \subseteq f^*(\Br U)$. Then for any $\calA \in \Br X$ there exists a $\calB \in \Br U$ such that $f^*\calB = \calA$. 

			Suppose that $t \in U$. Then, by functoriality of the Brauer group and class field theory, we have
			\[
				\sum_v \inv_v(\calA(P_v)) = \sum_v \inv_v((f^*\calB)(P_v))
				= \sum_v \inv_v(\calB(f(P_v))) = \sum_v \inv_v(\calB(t)) = 0
			\]
			and thus $(P_v) \in X(\A_k)^{\Br}$; see~\cite[\S5.2]{Skorobogatov-Torsors} for the definition of $X(\A_k)^{\Br}$.

			Now suppose that $t \notin U$, i.e., that $t$ is in the ramification locus of some $\calB \in \Br \kk(\PP^n)\setminus \Br k$ such that $f^*\calB \in \Br X$. By Theorem~\ref{thm:MainExplicit}, we may assume that $\calB = \calA_\scrT$ for some $\scrT \subseteq \scrS$ satisfying~\eqref{eq:Assumption}. Since $t\in\PP^n(k)$, by construction of $\calA_\scrT$,  we must have that $\#\scrT(k) = 2$.  Additionally, the inverse image of an irreducible component of the ramification locus of $\calA_\scrT$ (considered as an element of $\Br\kk(\PP^n)$) is the union of two irreducible curves, each defined over a nontrivial quadratic extension of $k$, that meet in exactly two points.  By assumption, since $f^{-1}(t)(\A_k) \neq \emptyset$, the union of these two curves contains adelic points, and thus the curves meet in two $k$-points. Hence $X(\A_k)^{\Br} \supseteq X(k) \neq \emptyset$. \qed

		\subsection{Proof of Corollary~\ref{cor:FiberEquivalence}}
		\label{subsec:proof3}

			Let $\scrT \subset \scrS$ be a degree $2$ subscheme satisfying~\eqref{eq:Assumption} such that $\calA_\scrT$ generates $\Br X/\Br_0 X$. If $\scrT(k) = \{t_0,t_1\}$ then the corollary is an application of~\cite[Thm.~2.2.1(a)]{CTSkoSD-Crelle}, taking into account that the map $f$ has at most two geometrically reducible fibers, each defined over $k$. If $\scrT$ is a single point of degree $2$ then the Corollary follows from~\cite[proof of Thm.~A]{CTSko-FibrationRevisited}.

		\subsection{Computation of the evaluation map}\label{subsec:Eval}

			If $X$ is a degree $4$ del Pezzo surface over a number field, then for any $\calA \in \Br X$ and any place $v$ of good reduction, the evaluation map $ev_{\calA}\colon X(k_v) \to \Q/\Z$ is constant~\cite{CTS-TAMS,Bright07}. The results of \S\ref{sec:VerticalElements} allow us to conclude the same is true for a larger set of places, over global fields of characteristic not 2.

			\begin{proposition}
				Let $v\nmid 2$ be a place of $k$, and let $\scrT\subseteq\scrS$ satisfy~\eqref{eq:Assumption}. If, for all $T \in \scrT$, $Q_T$ modulo $v$ is a rank $4$ quadric, then for any $\calB \in \Br X$ such that $\calB = \calA_\scrT$ in $\Br X/\Br_0 X$, the evaluation map $ev_{\calB}\colon X(k_v) \to \Q/\Z$ is constant.
			\end{proposition}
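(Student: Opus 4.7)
The plan is to prove the stronger statement that $\ev_{\calA_\scrT}\equiv 0$ on $X(k_v)$; the conclusion for $\calB$ follows at once, because $\calB-\calA_\scrT\in\Br_0 X$ contributes only its (constant) image in $\Br k_v$ to the evaluation map, so $\ev_\calB=\ev_{\calA_\scrT}+c$ for some $c\in\Br k_v$ independent of the point.

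The first step is to show that $L_\scrT=k(\sqrt{\eps_T})$ is unramified at $v$. The rank-$4$ reduction hypothesis, combined with Lemma~\ref{lem:Rank4NormalForm} applied over $\calO_{\kappa(T)_v}$, shows that $\eps_T$ is a unit modulo squares at every place of $\kappa(T)$ above $v$; since $v\nmid 2$, this gives the unramifiedness. If $L_\scrT\otimes_k k_v$ splits, then $\calA_\scrT|_{k_v}=0$ and the proposition holds trivially; otherwise $L_v:=L_\scrT\otimes_k k_v$ is the unramified quadratic extension of $k_v$, and $\inv_v(L_v/k_v,a)=\tfrac{1}{2}\bigl(v(a)\bmod 2\bigr)\in\tfrac{1}{2}\Z/\Z$ for every $a\in k_v^\times$.

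The heart of the argument is to extend $\calA_\scrT$ to a class in $\Br\calX'$, where $\calX'$ is a regular proper model of $X$ over $\calO_v$ obtained by resolving the singularities of the naive closure $\calX\subseteq\PP^4_{\calO_v}$. Once this extension is achieved, properness yields $X(k_v)=\calX'(\calO_v)$, so $\ev_{\calA_\scrT}$ factors through $\Br\calO_v\cong\Br\kappa(v)=0$, forcing it to vanish. Residues of $\calA_\scrT$ along horizontal divisors of $\calX'$ are automatically trivial since $\calA_\scrT\in\Br X$, so the task reduces to showing that, for every vertical prime divisor $D$ of $\calX'$, the residue of the cyclic algebra $\bigl(L_\scrT/k,\ell^{-2}\prod_T\Norm_{\kappa(T)/k}(\ell_T)\bigr)$ along $D$ vanishes in $H^1(\kappa(D),\Z/2)$. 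This residue equals $v_D(f)$ times the restriction of the character $\chi_{L_v/k_v}$ to $\kappa(D)$, and therefore vanishes whenever either $\eps_T\in\kappa(D)^{\times 2}$ (so that $\chi|_{\kappa(D)}$ is trivial) or $v_D(f)$ is even.

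To verify one of these conditions at each $D$, I would use Hensel's lemma and the rank-$4$ reduction to choose integral representatives $\ell_T$ with nonvanishing reduction (a rank-$4$ quadric over a finite residue field always has smooth rational points), and then invoke the integral normal form $cQ_T=\ell_T\,\ell_2^{(T)}-(\ell_3^{(T)})^2+\eps_T(\ell_4^{(T)})^2$ from Lemma~\ref{lem:Rank4NormalForm}. Restricting to $X$ yields the key identity $\ell_T\,\ell_2^{(T)}=\Norm_{L_\scrT/k}(\ell_3^{(T)}+\sqrt{\eps_T}\,\ell_4^{(T)})$ as functions on $X_{k_v}$, which lets one replace $\ell_T$ by $(\ell_2^{(T)})^{-1}$ in the cyclic algebra representation modulo $\Norm(L_v(X)^\times)$, adjusting the valuation at $D$ by an even amount. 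The main obstacle is carrying out this adjustment coherently across all vertical components of $\calX'_v$ at once, including those components that arise from degenerations of quadrics $Q_{T'}$ with $T'\in\scrS\setminus\scrT$---where the rank-$4$ hypothesis is unavailable and one must rely on the fact that $\calA_\scrT$ is unramified on all of $X$ to extract the needed parity control.
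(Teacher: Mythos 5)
Your reduction to showing $\ev_{\calA_\scrT}\equiv 0$ and your observation that $k(\sqrt{\eps_T})/k$ is unramified at $v$ (or split, in which case there is nothing to prove) are fine and agree with the paper. But the core of your argument --- extending $\calA_\scrT$ to a class in $\Br\calX'$ for a regular proper model $\calX'/\calO_v$ by killing the residues along \emph{all} vertical prime divisors --- is precisely the step you do not carry out, and it is where the difficulty lives. The hypotheses only control the two quadrics $Q_T$, $T\in\scrT$; the other quadrics in the pencil may degenerate arbitrarily badly at $v$, so $X$ need not have good reduction, the special fiber of $\calX'$ can have many components you have no handle on, and the fact that $\calA_\scrT$ is unramified on the generic fiber $X$ gives no parity information about $v_D\bigl(\ell^{-2}\prod_T\Norm_{\kappa(T)/k}(\ell_T)\bigr)$ along a vertical divisor $D$ (that is exactly the content of good-reduction statements such as those of Colliot-Th\'el\`ene--Skorobogatov and Bright, which this proposition is designed to go beyond). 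You flag this yourself as ``the main obstacle,'' so as written the proposal is an outline of a strategy, not a proof; I see no way to complete it from the stated hypotheses without substantially more input about the model.

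The paper avoids models altogether and argues pointwise. Writing $cQ_T=f_{T,1}f_{T,2}-f_{T,3}^2+\eps f_{T,4}^2$ with the $f_{T,i}$ linearly independent modulo $v$ (your integral normal form), one notes that replacing $f_{T,1}$ by $f_{T,2}$ in the cyclic algebra does not change its class. Then for any $P\in X(k_v)=X(\calO_v)$, at least one $f_{T,i}(P)$ is a $v$-adic unit; if $f_{T,3}(P)$ or $f_{T,4}(P)$ is a unit, the anisotropy of $x^2-\eps y^2$ over $\F_v$ (for $\eps$ a unit nonsquare) forces $f_{T,1}(P)$ and $f_{T,2}(P)$ to be units as well, and in every case a Hilbert symbol $(\eps,u)_v$ with $\eps$ and $u$ units at an odd place vanishes. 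So $\ev_{\calA}(P)=0$ directly, with no resolution of singularities and no residue analysis on a special fiber. If you want to salvage your approach, you would essentially have to reprove this local computation divisor by divisor on $\calX'_v$, which is strictly harder than the pointwise argument.
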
	
			\begin{proof}
				Write $\F_v$ for the residue field of $k_v$. Since $Q_T$ modulo $v$ is a rank $4$ quadric, by Lemma~\ref{lem:Rank4NormalForm}, there exist $k$-linear forms $f_{T,1}, f_{T,2}, f_{T,3}, f_{T,4}$ that are linearly independent over $\F_v$ and $c\in k^{\times}$ such that
				\[
					cQ_T = f_{T,1}f_{T,2} - (f_{T,3}^2 - \eps_{T_0}f_{T,4}^2).
				\]
				Therefore, for $i_T \in \{1,2\}$, the algebras $\left( \eps_{T_0}, \ell^{-2}\prod_{T \in \scrT}\Norm_{\kappa(T)/k}(f_{T,i_T}) \right)$ are isomorphic; we write $\calA$ for this isomorphism class. Note that $\calA$ is equal to $\calA_\scrT$ in $\Br X/\Br_0 X$. 
				
				If $\eps_{T_0} \in k_v^{\times2}$, then $\calA$ is trivial when considered as an element of $\Br X_v$, and so the evaluation map $\ev_\calA$ is identically zero.  Assume that $\eps_{T_0} \notin k_v^{\times2}$; since $v \nmid 2$ and $Q_{T}$ does not drop rank modulo $v$, the extension $k(\sqrt{\eps_{T_0}})/k$ is unramified at $v$. By properness of $X$, we know that $X(k_v) = X(\calO_v)$. Then for any $P\in X(\calO_v) \subseteq V(Q_T)^{\textup{smooth}}(\calO_v)$, at least one of the $f_{T,i}(P)$ is in $\calO_v^{\times}$.  If $f_{T,3}(P)$ or $f_{T,4}(P)$ is a $v$-adic unit, then, since $\eps_{T_0}\not\in k_v^{\times2}$, both $f_{T,1}(P)$ and $f_{T,2}(P)$ are $v$-adic units.  A Hilbert symbol calculation then shows that $ev_\calA(P) = 0$.
				
				For any $\calB \in \Br X$ such that $\calB = \calA_\scrT$ in $\Br X/\Br_0 X$, we have $\calB - \calA \in \Br_0 X$; since $\ev_\calA$ is identically zero, it follows that $\ev_\calB$ is constant.
			\end{proof}

		\subsection{Density of $k$-points in the Brauer set}
		\label{subsec:ConditionD}

			In this section, we prove that certain del Pezzo surfaces of degree $4$ over a number field $k$ satisfy $\overline{X(k)} = X(\Adeles_k)^{\Br}$, i.e., the Brauer-Manin obstruction to the Hasse principle and weak approximation is the only one on $X$. This result is conditional on Schinzel's hypothesis (see~\cite[\S4]{CTSD-HPWAForSB} for its statement over number fields) and the finiteness of Tate-Shafarevich groups of elliptic curves.  Some of the cases we consider were already known by~\cite[Th\'eor\`eme~3.36(iv)]{Wittenberg-SpringerThesis}, \cite[Proposition~3.2.1(c)]{CTSkoSD-Inventiones}, and~\cite[Theorem~5.1]{HarpazSkorobogatovWittenberg}; the latter result notably does not require the use of Schinzel's hypothesis. However, our results also include some new cases, e.g., some degree $4$ del Pezzo surfaces whose associated pencil of quadrics contains 3 $k$-rational rank $4$ quadrics that are not necessarily {simultaneously} diagonalizable.
									
			We consider del Pezzo surfaces $X$ of degree $4$ over a number field $k$ with the following properties:
			\begin{enumerate}
				\item the scheme $\scrS$ contains three $k$-rational points $T_0, T_1, T_2$;
				\item the subscheme $\scrT := \{T_0,T_1\}$ of $\scrS$ satisfies~\eqref{eq:Assumption} while any other degree $2$ subschemes of $\scrS$ containing either $T_0$ or $T_1$ do not; and
				\item the line spanned by the vertices of the rank~$4$ quadrics $W_3$ and $W_4$ intersects $W_0$ and $W_1$ each in two $k$-rational points.
			\end{enumerate}
			Since the surface~\eqref{BSD}, considered by Birch and Swinnerton-Dyer in~\cite{BSD-dP4}, satisfies these conditions, we say that such a surface is of \defi{BSD type}. In fact, we will see that any surface of BSD type has a form similar to that of~\eqref{BSD}, further justifying the nomenclature.
			
			By Proposition~\ref{prop:block diagonal}, we may assume that a del Pezzo surface of degree $4$ satisfying $(1)$ is given by the intersection of two quadrics of the following form:
			\begin{align*}
				Q_0'(x_3,x_4) & = x_2^2 - \eps_0 d_0 x_1^2,\\
				Q_1'(x_3,x_4) & = x_2^2 - \eps_1 d_1 x_0^2,
			\end{align*}
			where $d_i$ has the same square class as the discriminant of $Q_i'$.  Condition $(2)$ allows us to assume that $\eps_0 = \eps_1$, and further implies, by Theorem~\ref{thm:MainExplicit}, that $\Br X/\Br k$ is generated by $\calA_\scrT$. Lastly, condition $(3)$ allows us to assume that $Q_0'$ and $Q_1'$ are split, and thus that $d_0 = d_1 = 1$.  After a change of coordinates, we may assume that $Q_0' = cx_3x_4$ and that $Q_1' = (x_3 + x_4)(ax_3 + bx_4)$.  In summary, any degree $4$ del Pezzo surface of BSD type is the intersection of two quadrics of the following form:
			\begin{equation}
				\label{eq:BSDtype}
				\begin{split}
				cx_3x_4 & = x_2^2 - \eps x_1^2,\\
				(x_3 + x_4)(ax_3 + bx_4) & = x_2^2 - \eps x_0^2.
				\end{split}
			\end{equation}
			\begin{theorem}\label{thm:BM only one}
				Let $X$ be a del Pezzo surface of degree $4$ over a number field $k$ of BSD-type. Assume Schinzel's hypothesis and the finiteness of Tate-Shafarevich groups of elliptic curves. Then $\overline{X(k)} = X(\A_k)^{\Br}$. In other words, the Brauer-Manin obstruction to the Hasse principle and weak approximation on $X$ is the only one.
			\end{theorem}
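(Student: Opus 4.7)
The plan is to identify a genus~$1$ fibration $f\colon X \dasharrow \PP^1$ to which Wittenberg's conditional density theorem~\cite[Thm.~3.36]{Wittenberg-SpringerThesis} applies. By Proposition~\ref{prop:block diagonal}, condition~(1) allows us to block-diagonalize the defining pair of quadrics with three $1\times 1$ blocks (corresponding to $T_0, T_1, T_2$) and a single $2 \times 2$ block; condition~(2) identifies the discriminants appearing in the $1\times 1$ blocks, and condition~(3) splits the $2\times 2$ block over $k$. Together these bring $X$ into the explicit form~\eqref{eq:BSDtype}. The natural choice for $f$ is then the projection
\[
f\colon [x_0:\cdots:x_4]\longmapsto [x_3:x_4]
\]
away from the plane $V(x_3, x_4)$. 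By Theorem~\ref{thm:MainExplicit}, $\calA_\scrT$ is vertical with respect to $f$, so every element of $\Br X/\Br_0 X = \langle \calA_\scrT \rangle$ is vertical for~$f$.

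Next I would verify that $f$ has exactly six geometrically reducible fibers. A direct computation using~\eqref{eq:BSDtype} shows that the restrictions of $W_0$ and $W_1$ to a generic fiber hyperplane $H_{[s:t]} := V(tx_3 - sx_4)$ each drop to rank $2$ at two $k$-rational points of $\PP^1$ (namely $[1:0]$ and $[0:1]$ for $W_0$, and $[-1:1]$ and $[-b:a]$ for $W_1$), while the restriction of $W_2$ drops to rank $2$ at the two roots (possibly conjugate under $G_k$) of the binary form $as^2 + (a+b-c)st + bt^2$. Each of the six reducible fibers is a union of two conics meeting in two points, and the generic fiber of~$f$ is a smooth curve of genus~$1$.

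The core of the argument is the verification of Wittenberg's ``Condition~(D)'' for this fibration. The Jacobian $E$ of the generic fiber of $f$ over $k(\PP^1)$ can be written down explicitly from the pencil of quadrics on $H_{[s:t]}$: since the vertices of $W_0, W_1, W_2$ all lie in $H_{[s:t]}$ for every $[s:t]$, the splittings guaranteed by conditions~(2) and~(3) furnish enough $k(\PP^1)$-rational $2$-torsion in $E$ to meet Wittenberg's hypotheses. One also checks that the pullback of $\calA_\scrT$ to each smooth $k$-fiber gives the correct local Brauer-Manin obstruction on that fiber, so that the results of~\cite{CTSkoSD-Inventiones,CT-BenderSD} formalized by Wittenberg apply. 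Once Condition~(D) is established, invoking~\cite[Thm.~3.36]{Wittenberg-SpringerThesis}, conditionally on Schinzel's hypothesis and the finiteness of Tate-Shafarevich groups of the Jacobians of the smooth $k$-fibers of $f$, yields the density $\overline{X(k)} = X(\A_k)^{\Br}$.

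The main obstacle is the verification of Condition~(D). This is a delicate Galois-cohomological condition linking the Galois action on $E[2]$, on the components of the six reducible fibers, and on the vertical class $\calA_\scrT$. Condition~(3) of BSD-type is essential here: without the extra $k$-rational vertices produced by splitting the $2\times 2$ block, the $k(\PP^1)$-rational $2$-torsion in $E$ would be defined only over a quadratic extension, and Wittenberg's hypotheses would fail. This is precisely why Theorem~\ref{thm:BM only one} is restricted to BSD-type surfaces and not to all degree $4$ del Pezzo surfaces with $\Br X/\Br_0 X \cong \Z/2\Z$.
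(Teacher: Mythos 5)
Your overall strategy coincides with the paper's: bring $X$ into the form \eqref{eq:BSDtype} via Proposition~\ref{prop:block diagonal}, project away from the plane $V(x_3,x_4)$, observe that the resulting genus~$1$ fibration has exactly six geometrically reducible fibers (your list of the six degenerate fibers matches the paper's), and feed it into Wittenberg's conditional theorem. But as written there is a genuine gap: the three hypotheses of that theorem --- that the generic fiber has period exactly~$2$, that its Jacobian has $\kk(\PP^1)$-rational $2$-torsion, and above all ``condition (D)'' --- are asserted rather than verified. Condition (D) is the heart of the paper's proof: one passes to the blow-up $\tilde X$ of $X$ along $V(x_3,x_4)\cap X$ (so the projection becomes a morphism and the exceptional curves $E_1,\dots,E_4$ become available as Picard generators), computes $\Pic\tilde X$ together with the classes of the components of the six reducible fibers, and then shows by an explicit Galois-cohomology computation that $\HH^1\left(G_k,\Pic \tilde X_\Ubar\right)\isom \HH^1\left(G_k,\Pic X_\kbar\right)\times\Z/2$, with generators $\calA_\scrT=(\eps,x_4/x_3+1)$ and $(\eps,1-\alpha x_4/x_3)$, both visibly vertical; this is precisely where BSD-type conditions (2) and (3) are used. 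Your remarks that ``the splittings furnish enough rational $2$-torsion'' and that one ``checks the pullback of $\calA_\scrT$ gives the correct obstruction'' do not substitute for this computation, and you explicitly defer condition (D) (``once Condition~(D) is established\dots''), so the decisive step is missing. The period-$2$ claim likewise needs an argument --- the paper's parity and intersection-number computation showing there is no Galois-invariant degree-$1$ class on the generic fiber --- which you do not address.

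Two further points. The result you invoke, \cite[Thm.~3.36]{Wittenberg-SpringerThesis}, covers the cases with $\Br X=\Br k$ (as noted in the introduction of this paper), so it cannot be applied directly to BSD-type surfaces, whose defining feature is a nonconstant Brauer class; the correct input is Wittenberg's general fibration theorem (Th\'eor\`eme~1.1 of that reference), applied to the blown-up fibration $\tilde f\colon\tilde X\to\PP^1$. Moreover, that theorem yields a $k$-point when $X(\A_k)^{\Br}\neq\emptyset$; to obtain the full density statement $\overline{X(k)}=X(\A_k)^{\Br}$ one still needs the theorem of Salberger--Skorobogatov \cite[Theorem~0.1]{SalbergerSkorobogatov}, a step absent from your proposal.
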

			
			\begin{proof}
				If $X(\A_k)^{\Br} = \emptyset$, then there is nothing to prove. We will show that if $X(\A_k)^{\Br} \neq \emptyset$ then $X(k) \neq \emptyset$. Work of Salberger and Skorobogatov~\cite[Theorem~0.1]{SalbergerSkorobogatov} then implies that $\overline{X(k)} = X(\A_k)^{\Br}$.
				In~\cite[Th\'eor\`eme~1.1]{Wittenberg-SpringerThesis}, Wittenberg shows that the Brauer-Manin obstruction to the Hasse principle is the only one for genus $1$ fibrations $Y \to \PP^1$ such that
				\begin{enumerate}
					\item[(i)] the generic fiber $Y_\eta$ has period $2$, i.e., it has no $\kk(\PP^1)$-rational points, but acquires a rational point over a quadratic extension of $\kk(\PP^1)$,
					\item[(ii)] the Jacobian $\Jac Y_\eta$ has $\kk(\PP^1)$-rational $2$-torsion, and
					\item[(iii)] the so-called ``condition (D)'' is satisfied (we explain this condition below).
				\end{enumerate}
				Therefore, to prove the theorem, it suffices to construct a genus $1$ fibration on (a birational model of) X, and prove that it has the desired properties.  Throughout, we assume that $X$ is of the form~\eqref{eq:BSDtype}.
				
				Let $\tilde X := \textup{Bl}_{V(x_3,x_4)} X$, and write $E_1,E_2,E_3$, and $E_4$, respectively, for the exceptional curves of the blow-up lying over the points 
				\[
					[1:1:\sqrt\eps:0:0],\; [-1:1:\sqrt\eps:0:0],\; 
					[1:-1:\sqrt\eps:0:0],\; \textup{ and } 
					[-1:-1:\sqrt\eps:0:0]
				\]
				of $V(x_3,x_4)\cap X.$ Consider the fibration $\tilde f\colon \tilde X \to \PP^1$ that sends ${\bf x}\mapsto [x_3:x_4]$.  Since the generic fiber of $\tilde f$ is the intersection of two quadrics in $\PP^3$, this is a genus $1$ fibration.  We claim that $\tilde f$ has the desired properties.
								
				By Proposition~\ref{prop:PicGens} and standard properties of blow-ups, $\Pic \tilde X$ is freely generated by
				\[
				C_0,\;C_1,\;C_2,\;C_3,\;C_4,\; B := \frac12\left(H + C_0 + C_1 + C_2 + C_3 + C_4\right),\;E_1,\;E_2,\; E_3,  \textup{ and }  E_4,
				\]
				 To determine the Picard group of $\tilde X_\eta$, we must determine the reducible fibers of $\tilde f$ and their classes in $\Pic \tilde X$.
				
				By~\cite[\S1]{CTSSDI}, the intersection of $X$ with a hyperplane $H$ is reducible if and only if $H$ is tangent to a rank $4$ quadric at a smooth point.  Using this fact, we see that the reducible fibers of $\tilde f$ lie above
				\[
					V(s t(s + t)(as + bt)(as^2 + (a + b - c)st + bt^2))\subset \PP^1_{(s:t)}.
				\]
				Using the discussion in~\S\ref{sec:PicGens} and standard properties of the blow-up, the class of a general fiber of $\tilde f$ is $F := H - E_1 - E_2 - E_3 - E_4$, and each reducible fiber contains a component whose class is among the following
				\[
					C_0 - E_1 - E_2,\; C_0 - E_3 - E_4, \;
					C_1 - E_1 - E_3,\; C_1 - E_2 - E_4, \;
					C_2 - E_1 - E_4,\; C_2 - E_2 - E_3.
				\]
				Together, these 7 classes generate the kernel of the restriction map $\Pic \tilde X \to \Pic \tilde X_\eta$. Thus, for all $i\ne j$, $2E_i - 2E_j$ is trivial in $\Pic \tilde X_\eta$ and $E_i - E_j$ is non-trivial in $\Pic \tilde X_\eta$.  Additionally, $E_1 + E_2 - E_3 - E_4$ is trivial in $\Pic \tilde X_\eta$, and therefore $(\Jac \tilde X_\eta )[2] = \langle E_1 - E_2, E_1 - E_3 \rangle.$  Since each element of the absolute Galois group $G_k$ either fixes all exceptional curves, or simulatenously interchanges $E_1$ with $E_4$ and $E_2$ with $E_3$, we deduce that every $2$-torsion class in $\Jac \tilde X_\eta$ is $\kk(\PP^1)$-rational. In other words, $\tilde f$ satisfies condition (ii).
				
				Next, we turn to condition (i). The class $E_1 + E_4$ in $\Pic \tilde X$ is Galois invariant and gives rise to a degree $2$ point of the generic fiber $\tilde X_\eta$; therefore the period of $\tilde X_\eta$ divides $2$.  It remains to show that the period of $\tilde X_\eta$ is not $1$.  Consider the following intersection numbers:
				\[
					F\cdot E_i = 1, \quad
					F\cdot C_j = 2, \quad \textup{ and }
					F\cdot B = 7.
				\]
				If $\tilde X_\eta$ has a $\kk(\PP^1)$-rational point, then there is a Galois invariant degree $1$ element of $\Pic\tilde X_\eta.$  Since the restriction map $\Pic \tilde X \to \Pic \tilde X_\eta$ is surjective, we may compute $\Pic\tilde X_\eta$, as a Galois module, using the generators and relations given above. In particular, we deduce that $\Pic \tilde X_\eta$ is generated by 
				\[
					B,\; 
					C_3, \; E_3, \;
					(E_1 - E_3),\;\textup{ and }(E_2 - E_3),
				\]
				with relations $2(E_2 - E_3) = 2(E_1 - E_3) = 0$.  By BSD type condition (2), there exists an element $\sigma\in G_k$ such that $\sigma(C_0) = C_0'$, $\sigma(C_1) = C_1'$ and $\sigma(C_2) = C_2$; this also implies that $\sigma(E_3) = E_2$.  One can check that, regardless of the action of $\sigma$ on $C_3$ and $C_4$, we have
				\[
					\sigma(B) = \beta B + \gamma C_3 + \delta E_3
					+ (E_2 - E_3),\; 
					{\sigma(C_3) = \beta' B + \gamma' C_3 + \delta' E_3, }
					 \;\textup{ and }\;
					\sigma(E_3) = E_3 + (E_2 - E_3)
				\]
				 for some integers {$\beta,\beta',\gamma,\gamma',\delta$ and $\delta'$.}  Therefore, if some integer combination of $B, C_3, E_3, (E_2 - E_3)$ and $(E_1 - E_3)$ is fixed by $G_k$, then the integer coefficients of $B$ and $E_3$ must have the same parity.  However, such a divisor class has even intersection with $F$, and so there is no Galois invariant divisor of degree $1$.  In particular, this implies that $\tilde X_\eta$ has period {equal to $2$} and condition (i) is satisfied.
				
				Finally, we show that $\tilde f$ satisfies condition (D). {Let $[\alpha:1]\in \PP^1_{(s:t)}$ be such that $\tilde f^{-1}([\alpha:1])$ is smooth and set $U := \PP^1\setminus[\alpha:1]$.} Let $\Br^{\textup{gnr}/\tilde X}\tilde X_{U}$ denote the group of classes in $\Br \tilde X_{U}$ generated by classes \emph{geometrically} unramified over $\tilde X$. By~\cite[Prop.\ 1.48]{Wittenberg-SpringerThesis}, condition (D) on the fibration $\tilde f$ is equivalent to 
		\begin{equation*}
		\label{eq: D1}
			\left(\Br^{\textup{gnr}/\tilde X}\tilde X_{U}\right)\{2\} \subset \Br_{\verti}^{(\tilde f)} \tilde X_{U}.
		\end{equation*}
		Since $\tilde X$ is geometrically rational, we have $\Br^{\textup{gnr}/\tilde X} \tilde X_{U} = \Br_1 \tilde X_{U}$. It is thus enough to show that
		\[
			\frac{\Br_1 \tilde X_{U}}{\Br k}\{2\} \subset \frac{\Br_{\verti}^{(\tilde f)}\tilde X_{U}}{\Br k },
		\]
		By the sequence of low-degree terms of the Hochschild-Serre spectral sequence, the group $\Br_1 \tilde X_U/\Br k$ is isomorphic to ${\rm H}^1\left(G_k,\Pic {\tilde X_\Ubar}\right)$. Note that $\Pic {\tilde X_\Ubar} \cong \Pic \tilde X / \langle F\rangle$ as Galois modules, so we can use our explicit generators for $\Pic \tilde X$ to compute this cohomology group, once we know the Galois actions on these generators.  Since $X$ is of BSD type, the sets 
		\[
			\{E_1,E_4\},\; \{E_2,E_3\},\;\{C_0, C_0'\},\; \{C_1,C_1'\},\;
			\{C_2,C_2'\},\;\textup{ and }\;\{C_3,C_3', C_4,C_4'\}
		\]
		are all invariant under $G_k$, and by~\eqref{eq:product}, the definition of $E_i$ and~\eqref{eq:Assumption}, the action of $G_k$ on $\{C_0, C_0'\}, $ and $\{C_3,C_3',C_4,C_4'\}$ determines the action on $\{C_1,C_1'\},\{C_2, C_2'\}, \{E_1,E_4\},$ and $\{E_2,E_3\}$.  Therefore, the action of $G_k$ factors through a subgroup of $\Z/2\times D_4$, and, in each case, one can compute, either using \texttt{Magma}~\cite{Magma} or by hand using the last isomorphism in~\eqref{eq:Maps}, that 
		\[
			{\rm H}^1\left(G_k, \Pic {\tilde X_\Ubar}\right)
			\isom {\rm H}^1\left(G_k,\Pic X_\kbar\right)\times \Z/2.
		\]
		On the other hand, ${(\eps, 1 - \alpha x_4/x_3)}\in \Br \tilde X_U$ and, since it is ramified on $F$, it is nontrivial in $\Br \kk(\tilde X)$ and not contained in $\Br \tilde X$.  Therefore, $\frac{\Br_1 \tilde X_{U}}{\Br k}$ is generated by $\calA_\scrT = (\eps, x_4/x_3 + 1)$ and ${(\eps, 1 - \alpha x_4/x_3)}$, both of which are plainly vertical.  Hence condition (D) holds for $\tilde f$, as desired, which completes the proof.
			\end{proof}
	
	\section{Brauer groups of order $4$}\label{sec:Order4}

		It is natural to ask whether Theorem~\ref{thm:MainIntro} can be strengthened to show that $\Br X = \Br_{\verti}^{(f)} X$ for some map $f\colon X \dasharrow \PP^1$, even if $\#(\Br X/\Br_0 X) = 4$. In this section we consider a variation of this question for maps obtained by projecting away from a plane in $\PP^4$. 

		We retain the notation from \S\ref{sec:Picdp4}; throughout, we assume that $\#(\Br X/\Br_0 X) = 4$. By Theorem~\ref{thm:MainExplicit}, there exist three $k$-points $T_0$, $T_1$, and $T_2 \in \scrS(k)$ such that each pair satisfies~\eqref{eq:Assumption}. In this case, the three nontrivial elements of $\Br X/\Br_0 X$ are represented by the quaternion algebras 
		\[
			\{ (\eps_{T_0},\ell_i/\ell_j) : 0 \leq i < j \leq 2\},
		\]
		where $\ell_i$ is any $k$-linear form whose corresponding hyperplane is tangent to $V(Q_{T_i})$ at a smooth point. 

		We ask if there are forms $\ell_0, \ell_1$ and $\ell_2$ as above and a rational map $f\colon X \dasharrow \PP^1$, obtained by projecting away from a plane, such that $\ell_i/\ell_j \in f^*(\kk(\PP^1))$. If so, then $\Br X = \Br_{\verti}^{(f)} X$ for this map. If $\ell_i/\ell_j \in f^*(\kk(\PP^1))$, then, possibly after changing coordinates, the map $f$ is given by $x \mapsto [\ell_i(x):\ell_j(x)]$. Thus, a positive answer to the question is equivalent to $k$-linear dependence of $\ell_0, \ell_1$, and $\ell_2$. 

		\begin{prop}\label{prop:Vertical4}
			Let $X$ be a del Pezzo surface of degree $4$ over $k$. Assume that $X(k) \neq \emptyset$  and that $\#(\Br X/\Br_0 X) = 4$.  Then there exists a map $f \colon X \dasharrow \PP^1$, obtained by projection away from a plane, such that $\Br X = \Br_{\verti}^{(f)} X$.
		\end{prop}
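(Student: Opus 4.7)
The plan is to exploit the fact that the pencil of quadrics containing $X$ is only two-dimensional, so any three of its members are automatically linearly dependent as matrices; we will make this dependence transparent by choosing the tangent hyperplanes at a common point.

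First, using $X(k) \neq \emptyset$, I would pick a rational point $P \in X(k)$. The point $P$ lies on every member of the pencil, and in particular on each $V(Q_{T_i})$ for $i = 0,1,2$. I claim $P$ is a smooth point of each $V(Q_{T_i})$: if $P$ were the vertex of the rank-$4$ quadric $V(Q_{T_i})$, then $Q_{T_i}$ and all its partial derivatives would vanish at $P$, so the Jacobian of the pair $(Q,\widetilde Q)$ at $P$ would have rank at most $1$, contradicting the smoothness of $X$ at $P$. Hence I may define $\ell_i$ as the linear form cutting out the tangent hyperplane to $V(Q_{T_i})$ at $P$. By Theorem~\ref{thm:MainExplicit}, the three nontrivial classes of $\Br X/\Br_0 X$ are then represented by $(\eps_{T_0},\ell_i/\ell_j)$ for $0 \leq i < j \leq 2$.

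The key observation is that, writing $M_i$ for the symmetric matrix of $Q_{T_i}$, we have $\ell_i = v \mapsto v^T M_i P$. Since $M_0, M_1, M_2$ all lie in the two-dimensional $k$-vector space spanned by $M$ and $\widetilde M$, there exist $a, b, c \in k$, not all zero, with $aM_0 + bM_1 + cM_2 = 0$. Evaluating at $P$ gives $a\ell_0 + b\ell_1 + c\ell_2 = 0$, so the $\ell_i$ are $k$-linearly dependent. Next I would argue that no two of $\ell_0, \ell_1, \ell_2$ are proportional: if $\ell_i$ and $\ell_j$ were scalar multiples of each other, then $\ell_i/\ell_j \in k^{\times}$ and the class $(\eps_{T_0}, \ell_i/\ell_j)$ would lie in $\Br_0 X$, contradicting the hypothesis that $\#(\Br X/\Br_0 X) = 4$ combined with Theorem~\ref{thm:MainExplicit}. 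Thus the span of $\ell_0, \ell_1, \ell_2$ is exactly two-dimensional.

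Finally, let $f\colon X \dasharrow \PP^1$ be the map $x \mapsto [\ell_0(x) : \ell_1(x)]$, i.e., projection away from the plane $V(\ell_0, \ell_1) \subset \PP^4$. Since $X$ contains no linear forms in its ideal, $\ell_0|_X$ and $\ell_1|_X$ remain linearly independent in $\kk(X)$, so $f$ is dominant. By the linear dependence from the previous paragraph, $\ell_2$ is a $k$-linear combination of $\ell_0$ and $\ell_1$, so each ratio $\ell_i/\ell_j$ lies in $f^*\kk(\PP^1)$. Hence all three nontrivial classes of $\Br X/\Br_0 X$ lie in $\Br_{\verti}^{(f)} X$, and $\Br X = \Br_{\verti}^{(f)} X$. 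The main subtlety is the nonproportionality argument in the previous paragraph, but it follows cleanly from the nontriviality portion of Theorem~\ref{thm:MainExplicit}; the linear-algebra heart of the proof is essentially free once one picks all three tangent hyperplanes at a common $k$-point.
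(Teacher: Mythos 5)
Your proof is correct and follows essentially the same route as the paper: choose $P \in X(k)$, take the tangent hyperplanes $\ell_0,\ell_1,\ell_2$ to the three rank-$4$ quadrics at $P$, observe that the pencil relation among $Q_{T_0},Q_{T_1},Q_{T_2}$ forces a $k$-linear dependence among the $\ell_i$, and project from $V(\ell_0,\ell_1)$, invoking Theorem~\ref{thm:MainExplicit} to identify the resulting vertical classes with all of $\Br X/\Br_0 X$. The extra details you supply (smoothness of $P$ on each $V(Q_{T_i})$, non-proportionality of the $\ell_i$, dominance of $f$) are correct and only make explicit what the paper leaves implicit.
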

		\begin{proof}
			Let $P\in X(k)$ and let $\ell_0, \ell_1$ be linear forms such that the corresponding hyperplanes are tangent to $V(Q_{T_0})$ and  $V(Q_{T_1})$ at $P$, respectively.  Define the map
			\[
				f\colon X\dasharrow \PP^1\qquad x\mapsto [\ell_0(x):\ell_1(x)].
			\]
			Since $Q_{T_2}$ is a $k$-linear combination of $Q_{T_0}$ and $Q_{T_1}$, there is a $k$-linear combination $\ell_2$ of $\ell_0$ and $\ell_1$ whose corresponding hyperplane is tangent to $V(Q_{T_2})$ at $P$. The definition of $\calA_\scrT$ and Theorem~\ref{thm:MainExplicit} show that $\Br X = \Br_{\verti}^{(f)} X$.
		\end{proof}

		\begin{remark}
			The proof of Proposition~\ref{prop:Vertical4} gives a map $f\colon X \dasharrow \PP^1$ whose generic fiber is singular. To see this, let $\ell$ be a $k$-linear combination of $\ell_0$ and $\ell_1$.  Then there is some quadric $Q$ in the pencil of quadrics defining $X$ such that $V(\ell)$ is tangent to $V(Q)$ at a smooth point $P$.  The Jacobian criterion then shows that $X\cap V(\ell)$ is singular at $P$.
		\end{remark}

		One wonders if Proposition~\ref{prop:Vertical4} is as strong as possible: does the conclusion hold without assuming that $X(k) \neq \emptyset$? Is it possible to construct a map $f$ whose generic fiber is not singular? After rephrasing these questions more precisely, we will see that they are essentially the same question.

		The linear dependence condition on  $\ell_0$, $\ell_1$, and $\ell_2$ gives rise to a determinantal subvariety $Y^{(X)}$ in a product of three quadric surfaces, as follows. Say $Q_{T_i} \in k[x_0,\dots,x_4]$, and let $P_i \in \PP^4(\kbar)$ for $i = 0,1,2$. Consider the matrix $M(P_0,P_1,P_2)$ whose $(i,j)$-th entry is
		\[
		\frac{\partial Q_{T_i}}{\partial x_j}(P_i).
		\]
		Let $H\subseteq \PP^4$ be a hyperplane avoiding the vertices of $V(Q_{T_i})$ for $0\leq i \leq 2$. Then define
		\[
		Y^{(X)} := \{ (P_0,P_1,P_2) \in H^3 : \rk M(P_0,P_1,P_2) \leq 2 \textup{ and } Q_{T_i}(P_i) = 0 \}.
		\]
		The existence of $k$-linearly dependent forms $\ell_0$, $\ell_1$, and $\ell_2$ is equivalent to $Y^{(X)}(k) \neq \emptyset$: if $(P_0,P_1,P_2) \in Y^{(X)}(k)$, then we take $\ell_i$ to be the linear form defining the hyperplane tangent to $V(Q_{T_i})$ at $P_i$, for $0 \leq i \leq 2$.

		There is an embedding of $X(k)$ in $Y^{(X)}(k)$: for a point $x \in X(k)$, let $P_i(x) \in H(k)$ be the intersection of $H$ with the line joining $x$ and the vertex of $V(Q_{T_i})$. Then $(P_0(x),P_1(x),P_2(x))$ is a $k$-point of $Y^{(X)}$; this is the point giving rise to the map $f$ described in the proof of Proposition~\ref{prop:Vertical4}.  From this discussion we see that the questions above are equivalent to the following:

		\begin{question}
			Is there a $k$-rational point on $Y^{(X)}$ that does not come from the embedding of $X(k)$ as described above?
		\end{question}

		\begin{remark}\label{rmk:VerticalConic}
			If we relax the condition that $f$ is obtained by projecting away from a plane, then the existence of a $k$-point implies that $\Br X = \Br_{\verti}^{(f)} X$ where $f$ is a generically smooth map to $\PP^1.$  The idea is as follows.

			Let $P\in X(k)$ and consider $\widetilde{X}$, the blow-up of $X$ at $P$.  The surface $\widetilde{X}$ is a del Pezzo surface of degree $3$ containing a line $L$ defined over $k$.  Embed $\widetilde{X}$ into $\PP^3$ by the anticanonical embedding and consider the pencil of planes that contain $L$.  This gives a rational map $f\colon X\dasharrow \PP^1$ whose generic fiber is a smooth conic.  Since the generic fiber is a smooth conic, we have $\Br X = \Br_{\verti}^{(f)}(X).$
		\end{remark}


	\begin{bibdiv}
		\begin{biblist}

			\bib{BenderSD}{article}{
			   author={Bender, A. O.},
			   author={Swinnerton-Dyer, Peter},
			   title={Solubility of certain pencils of curves of genus 1, and of the
			   intersection of two quadrics in $\Bbb P^4$},
			   journal={Proc. London Math. Soc. (3)},
			   volume={83},
			   date={2001},
			   number={2},
			   pages={299--329},
			   issn={0024-6115},
			   review={\MR{1839456 (2002e:14033)}},
			   doi={10.1112/S0024611501012795},
			}

			\bib{BSD-dP4}{article}{
			   author={Birch, B. J.},
			   author={Swinnerton-Dyer, H. P. F.},
			   title={The Hasse problem for rational surfaces},
			   note={Collection of articles dedicated to Helmut Hasse on his
			   seventy-fifth birthday, III},
			   journal={J. Reine Angew. Math.},
			   volume={274/275},
			   date={1975},
			   pages={164--174},
			   issn={0075-4102},
			   review={\MR{0429913 (55 \#2922)}},
			}

			\bib{Magma}{article}{
			   author={Bosma, Wieb},
			   author={Cannon, John},
			   author={Playoust, Catherine},
			   title={The Magma algebra system. I. The user language},
			   note={Computational algebra and number theory (London, 1993)},
			   journal={J. Symbolic Comput.},
			   volume={24},
			   date={1997},
			   number={3-4},
			   pages={235--265},
			   issn={0747-7171},
			   review={\MR{1484478}},
			   doi={10.1006/jsco.1996.0125},
			}

			\bib{Bright07}{article}{
			   author={Bright, Martin},
			   title={Efficient evaluation of the Brauer-Manin obstruction},
			   journal={Math. Proc. Cambridge Philos. Soc.},
			   volume={142},
			   date={2007},
			   number={1},
			   pages={13--23},
			   issn={0305-0041},
			   review={\MR{2296387 (2007k:14026)}},
			   doi={10.1017/S0305004106009844},
			}

			\bib{BBFL}{article}{
			   author={Bright, M. J.},
			   author={Bruin, N.},
			   author={Flynn, E. V.},
			   author={Logan, A.},
			   title={The Brauer-Manin obstruction and Sh[2]},
			   journal={LMS J. Comput. Math.},
			   volume={10},
			   date={2007},
			   pages={354--377 (electronic)},
			   issn={1461-1570},
			   review={\MR{2342713 (2008i:11087)}},
			}
			
			\bib{CT-BenderSD}{article}{
			   author={Colliot-Th{\'e}l{\`e}ne, Jean-Louis},
			   title={Hasse principle for pencils of curves of genus one whose Jacobians
			   have a rational 2-division point, close variation on a paper of Bender
			   and Swinnerton-Dyer},
			   language={English, with French summary},
			   conference={
		      title={Rational points on algebraic varieties},
			   },
			   book={
			      series={Progr. Math.},
			      volume={199},
			      publisher={Birkh\"auser},
			      place={Basel},
			   },
			   date={2001},
			   pages={117--161},
			   review={\MR{1875172 (2003f:14017)}},
			}

			\bib{CTHS-NormicBundle}{article}{
			   author={Colliot-Th{\'e}l{\`e}ne,  Jean-Louis},
			   author={Harari, David},
			   author={Skorobogatov, Alexei},
			   title={Valeurs d'un polyn\^ome \`a une variable repr\'esent\'ees
			 			par une norme},
			   language={French, with English summary},
			   conference={
			      title={Number theory and algebraic geometry},
			   },
			   book={
			      series={London Math. Soc. Lecture Note Ser.},
			      volume={303},
			      publisher={Cambridge Univ. Press},
			      place={Cambridge},
			   },
			   date={2003},
			   pages={69--89},
			   review={\MR{2053456 (2005d:11095)}},
			}
			
			\bib{CTS80}{article}{
			   author={Colliot-Th{\'e}l{\`e}ne, Jean-Louis},
			   author={Sansuc, Jean-Jacques},
			   title={La descente sur les vari\'et\'es rationnelles},
			   language={French},
			   conference={
			      title={Journ\'ees de G\'eom\'etrie Alg\'ebrique d'Angers, Juillet
			      1979/Algebraic Geometry, Angers, 1979},
			   },
			   book={
			      publisher={Sijthoff \& Noordhoff},
			      place={Alphen aan den Rijn},
		   },
			   date={1980},
			   pages={223--237},
			   review={\MR{605344 (82d:14016)}},
			}

			\bib{CTSansuc-Schinzel}{article}{
			   author={Colliot-Th{\'e}l{\`e}ne, Jean-Louis},
			   author={Sansuc, Jean-Jacques},
			   title={Sur le principe de Hasse et l'approximation faible, et sur une
			   hypoth\`ese de Schinzel},
			   language={French},
			   journal={Acta Arith.},
			   volume={41},
			   date={1982},
			   number={1},
			   pages={33--53},
			   issn={0065-1036},
			   review={\MR{667708 (83j:10019)}},
			}			
			
			\bib{CTSSDI}{article}{
			   author={Colliot-Th{\'e}l{\`e}ne, Jean-Louis},
			   author={Sansuc, Jean-Jacques},
			   author={Swinnerton-Dyer, Peter},
			   title={Intersections of two quadrics and Ch\^atelet surfaces. I},
			   journal={J. Reine Angew. Math.},
			   volume={373},
			   date={1987},
			   pages={37--107},
			   issn={0075-4102},
			   review={\MR{870307 (88m:11045a)}},
			}
				
			\bib{CTSko-FibrationRevisited}{article}{
			   author={Colliot-Th{\'e}l{\`e}ne,  Jean-Louis},
			   author={Skorobogatov, Alexei},
			   title={Descent on fibrations over ${\bf P}^1_k$ revisited},
			   language={English, with French summary},
			   journal={Math. Proc. Cambridge Philos. Soc.},
			   volume={128},
			   date={2000},
			   number={3},
			   pages={383--393},
			   issn={0305-0041},
			   review={\MR{1744112 (2000m:11052)}},
			   doi={10.1017/S0305004199004077},
			}
			
			\bib{CTS-TAMS}{article}{
			   author={Colliot-Th{\'e}l{\`e}ne,  Jean-Louis},
			   author={Skorobogatov, Alexei},
			   title={Good reduction of the Brauer-Manin obstruction},
			   journal={Trans. Amer. Math. Soc.},
			   date={2012},
			   note={posted on September 19, 2012 (to appear in print)},
			}
			
			\bib{CTSkoSD-Inventiones}{article}{
			   author={Colliot-Th{\'e}l{\`e}ne,  Jean-Louis},
			   author={Skorobogatov, Alexei},
			   author={Swinnerton-Dyer, Peter},
			   title={Hasse principle for pencils of curves of genus one whose Jacobians
			   have rational $2$-division points},
			   journal={Invent. Math.},
			   volume={134},
			   date={1998},
			   number={3},
			   pages={579--650},
			   issn={0020-9910},
			   review={\MR{1660925 (99k:11095)}},
			   doi={10.1007/s002220050274},
			}

			\bib{CTSkoSD-Crelle}{article}{
			   author={Colliot-Th{\'e}l{\`e}ne,  Jean-Louis},
			   author={Skorobogatov, Alexei},
			   author={Swinnerton-Dyer, Peter},
			   title={Rational points and zero-cycles on fibred varieties: Schinzel's
			   hypothesis and Salberger's device},
			   journal={J. Reine Angew. Math.},
			   volume={495},
			   date={1998},
			   pages={1--28},
			   issn={0075-4102},
			   review={\MR{1603908 (99i:14027)}},
			   doi={10.1515/crll.1998.019},
			}

			\bib{CTSD-HPWAForSB}{article}{
			   author={Colliot-Th{\'e}l{\`e}ne, Jean-Louis},
			   author={Swinnerton-Dyer, Peter},
			   title={Hasse principle and weak approximation for pencils of
			   Severi-Brauer and similar varieties},
			   journal={J. Reine Angew. Math.},
			   volume={453},
			   date={1994},
			   pages={49--112},
			   issn={0075-4102},
			   review={\MR{1285781 (95h:11060)}},
			   doi={10.1515/crll.1994.453.49},
			}
			
			\bib{Grothendieck-BrauerIII}{article}{
			   author={Grothendieck, Alexander},
			   title={Le groupe de Brauer. III. Exemples et compl\'ements},
			   language={French},
			   conference={
			      title={Dix Expos\'es sur la Cohomologie des Sch\'emas},
			   },
			   book={
			      publisher={North-Holland},
			      place={Amsterdam},
			   },
			   date={1968},
			   pages={88--188},
			   review={\MR{0244271 (39 \#5586c)}},
			}
			
			\bib{HarpazSkorobogatovWittenberg}{article}{
			   author={Harpaz, Yonatan},
			   author={Skorobogatov, Alexei},
			   author={Wittenberg, Olivier},
			   title={The Hardy-Littlewood conjecture and rational points},
			   note={Preprint available at {\tt arXiv:1304.3333}}
			}
			
			\bib{KreschTschinkel}{article}{
			   author={Kresch, Andrew},
			   author={Tschinkel, Yuri},
			   title={Effectivity of Brauer-Manin obstructions},
			   journal={Adv. Math.},
			   volume={218},
			   date={2008},
			   number={1},
			   pages={1--27},
			   issn={0001-8708},
			   review={\MR{2409407 (2009e:14038)}},
			   doi={10.1016/j.aim.2007.11.017},
			}

			\bib{KST-dP4s}{article}{
			   author={Kunyavski{\u\i}, B. {\`E}.},
			   author={Skorobogatov, A. N.},
			   author={Tsfasman, M. A.},
			   title={del Pezzo surfaces of degree four},
			   language={English, with French summary},
			   journal={M\'em. Soc. Math. France (N.S.)},
			   number={37},
			   date={1989},
			   pages={113},
			   issn={0037-9484},
			   review={\MR{1016354 (90k:14035)}},
			}
			
			\bib{Manin-ICM}{article}{
			   author={Manin, Yu. I.},
			   title={Le groupe de Brauer-Grothendieck en g\'eom\'etrie 
					diophantienne},
			   conference={
			      title={Actes du Congr\`es International des Math\'ematiciens},
			      address={Nice},
			      date={1970},
			   },
			   book={
			      publisher={Gauthier-Villars},
			      place={Paris},
			   },
			   date={1971},
			   pages={401--411},
			   review={\MR{0427322 (55 \#356)}},
			}

			\bib{Manin-CubicForms}{book}{
			   author={Manin, Yu. I.},
			   title={Cubic forms: algebra, geometry, arithmetic},
			   note={Translated from the Russian by M. Hazewinkel;
			   North-Holland Mathematical Library, Vol. 4},
			   publisher={North-Holland Publishing Co.},
			   place={Amsterdam},
			   date={1974},
			   pages={vii+292},
			   isbn={0-7204-2456-9},
			   review={\MR{0460349 (57 \#343)}},
			}
			
			\bib{SalbergerSkorobogatov}{article}{
			   author={Salberger, P.},
			   author={Skorobogatov, A. N.},
			   title={Weak approximation for surfaces defined by two quadratic forms},
			   journal={Duke Math. J.},
			   volume={63},
			   date={1991},
			   number={2},
			   pages={517--536},
			   issn={0012-7094},
			   review={\MR{1115119 (93e:11079)}}
			}

			\bib{Skorobogatov-Torsors}{book}{
			   author={Skorobogatov, Alexei},
			   title={Torsors and rational points},
			   series={Cambridge Tracts in Mathematics},
			   volume={144},
			   publisher={Cambridge University Press},
			   place={Cambridge},
			   date={2001},
			   pages={viii+187},
			   isbn={0-521-80237-7},
			   review={\MR{1845760 (2002d:14032)}},
			   doi={10.1017/CBO9780511549588},
			}
			
			\bib{SD-BrauerGroup}{article}{
			   author={Swinnerton-Dyer, Peter},
			   title={The Brauer group of cubic surfaces},
			   journal={Math. Proc. Cambridge Philos. Soc.},
			   volume={113},
			   date={1993},
			   number={3},
			   pages={449--460},
			   issn={0305-0041},
			   review={\MR{1207510 (94a:14038)}},
			   doi={10.1017/S0305004100076106},
			}
			
			\bib{SD-TwoQuadrics}{article}{
			   author={Swinnerton-Dyer, Peter},
			   title={Rational points on certain intersections of two quadrics},
			   conference={
		       title={Abelian varieties},
		      address={Egloffstein},
		      date={1993},
			   },
			   book={
		      publisher={de Gruyter},
		      place={Berlin},
			   },
			   date={1995},
			   pages={273--292},
			   review={\MR{1336612 (97a:11099)}},
			}

			\bib{SD99}{article}{
			   author={Swinnerton-Dyer, Peter},
			   title={Brauer-Manin obstructions on some Del Pezzo surfaces},
			   journal={Math. Proc. Cambridge Philos. Soc.},
			   volume={125},
			   date={1999},
			   number={2},
			   pages={193--198},
			   issn={0305-0041},
			   review={\MR{1643855 (99h:11071)}},
			   doi={10.1017/S0305004198003144},
			}			
			
			\bib{Uematsu}{article}{
			   author={Uematsu, Tetsuya},
			   title={On the Brauer group of diagonal cubic surfaces},
			   journal={Q. J. Math.},
			   date={2013},
			   doi={10.1093/qmath/hat013}
			}
			
			\bib{VA-WeakApprox}{article}{
			   author={V{\'a}rilly-Alvarado, Anthony},
			   title={Weak approximation on del Pezzo surfaces of degree 1},
			   journal={Adv. Math.},
			   volume={219},
			   date={2008},
			   number={6},
			   pages={2123--2145},
			   issn={0001-8708},
			   review={\MR{2456278 (2009j:14045)}},
			   doi={10.1016/j.aim.2008.08.007},
			}

			\bib{Wittenberg-SpringerThesis}{book}{
			   author={Wittenberg, Olivier},
			   title={Intersections de deux quadriques et pinceaux de courbes de 
				genre 1/Intersections of two quadrics and pencils of curves of 
				genus 1},
			   language={French},
			   series={Lecture Notes in Mathematics},
			   volume={1901},
			   publisher={Springer},
			   place={Berlin},
			   date={2007},
			   pages={viii+218},
			   isbn={978-3-540-69137-2},
			   isbn={3-540-69137-5},
			   review={\MR{2307807 (2008b:14029)}},
			}

		\end{biblist}
	\end{bibdiv}

\end{document}